\newtheorem{thm}{Theorem}[section]
\newtheorem{cor}[thm]{Corollary}
\newtheorem{lem}[thm]{Lemma}
\newtheorem{prop}[thm]{Proposition}
\theoremstyle{definition}
\newtheorem{defn}[thm]{Definition}
\theoremstyle{remark}
\newtheorem{rem}[thm]{Remark}
\newtheorem{exa}[thm]{Example}
\numberwithin{equation}{section}
\newtheorem{ques}[thm]{Question}
\newcommand{\eps}{\varepsilon}
\newcommand{\lam}{\lambda}
\newcommand{\Lam}{\Lambda}
\DeclareMathOperator{\dist}{dist}
\DeclareMathOperator{\Span}{Span}
\DeclareMathOperator{\Orb}{Orb}
\DeclareMathOperator{\Hol}{Hol}
\newcommand{\eq}{\equiv}
\newcommand{\vphi}{\varphi}
\newcommand{\ol}[1]{\overline{#1}}
\newcommand{\wt}[1]{\widetilde{#1}}
\newcommand{\bigbracket}[1]{\left( #1 \right)}
\newcommand{\bigcurlybracket}[1]{\left\{ #1 \right\}}
\newcommand{\bigabs}[1]{\left| #1 \right|}
\newcommand{\bignorm}[1]{\left\| #1 \right\|}
\newcommand{\calC}{\mathcal C}
\newcommand{\calH}{\mathcal H}
\newcommand {\R} {\mathbb R}
\newcommand {\C} {\mathbb C}
\newcommand {\N} {\mathbb N}
\newcommand {\D} {\mathbb D}
\newcommand{\lbsq}{\ell_\beta^2}
\newcommand{\HbS}{\calH(E, \beta_S)}
\newcommand{\Hblam}{\calH(\beta,\Lam)}
\def\dsum{\displaystyle\sum}
\def\dfrac{\displaystyle\frac}
\definecolor{ao(english)}{rgb}{0.0, 0.5, 0.0}
\definecolor{darkmagenta}{rgb}{0.55, 0.0, 0.55}
\begin{document}
	
\title[]{Weighted holomorphic Dirichlet series and composition operators with polynomial symbols}%
	
\date{\today}%
	
\author{Emmanuel Fricain$^{1}$}%
\address{Laboratoire Paul Painlev\'e, UFR de Math\'ematiques, B\^atiment M2, Universit\'e de Lille, 59 655 Villeneuve d'Ascq C\'edex, France}%
\email{emmanuel.fricain@univ-lille.fr}%
	
\author{Camille Mau$^{2}$}%
\address{Division of Mathematical Sciences, School of Physical and Mathematical Sciences, Nanyang Technological University (NTU), 637371 Singapore}%
\email{CAMILLE001@e.ntu.edu.sg}

\thanks{Supported in part by the Labex CEMPI  (ANR-11-LABX-0007-01)$^{1}$ and by CN Yang Scholars Programme$^{2}$}
	
\subjclass[2010]{30D15; 47B33}%
	
\keywords{Weighted holomorphic Dirichlet series; composition operator; cyclicity}%

\thanks{The authors would like to thank L\^e Hai Khoi for many fruithful conversations and comments concerning a preliminary version of this work. They also thank Sophie Grivaux for helpful discussions concerning the cyclicity part of this work.}
	
	
\begin{abstract}
In this paper, we introduce a general class of weighted spaces of holomorphic Dirichlet series (with real frequencies) analytic in some half-plane and study composition operators on these spaces. In the particular case when the symbol inducing the composition operator is an affine function, we give criteria for boundedness and compactness. We also study the cyclicity property and as a byproduct give a sufficient condition so that the direct sum of the identity plus a weighted forward shift operator on the Hardy space $H^2$ is cyclic.
\end{abstract}	
	
\maketitle

\tableofcontents

\section{\bf Introduction}

\subsection{Dirichlet series}
Let $\Lam = (\lambda_n)_{n\geq 1}$ be a strictly increasing sequence of non-negative real numbers tending to $\infty$.
A \emph{Dirichlet series of type $\Lambda$} (Dirichlet series in short) is a series of the form
\begin{align}\label{DS}
\sum_{n=1}^{\infty} a_n e^{-\lambda_n z}\,,
\end{align}
where $z$ is a complex variable, and the coefficients of the series are given by a sequence $(a_n)_{n\geq 1}$ of complex numbers.

If $\lambda_n=n$, $n\geq 1$, then \eqref{DS} are power series in $\zeta=e^{-z}$. When $\lambda_n=\log{n}$, $n\geq 1$, we get the so-called classical Dirichlet series, which have many applications in analytic number theory (see, e.g., \cite{Apo}). Classical Dirichlet series also relate to several problems in functional analysis (see, e.g., \cite{Hed04} and references therein). We refer the reader to the monograph \cite{HR15} or \cite{Valiron} for more detailed information on Dirichlet series.

The properties of convergence of Dirichlet series depend on two specific quantities that we introduce now. Fix $\Lambda=(\lambda_n)_{n\geq 1}$ a strictly increasing sequence of non-negative real numbers tending to $\infty$, and define
\begin{equation}\label{L}
L = \limsup_{n\to\infty} \frac{\log n}{\lambda_n}.
\end{equation}
Now to each Dirichlet series $f$ of type $\Lambda$ given by \eqref{DS} we may associate the following quantity
\begin{equation}\label{L-D}
D_f= \limsup_{n\to\infty} \frac{\log |a_n|}{\lambda_n}.
\end{equation}
Note that the quantity $D_f$ is specific to each Dirichlet series of type $\Lambda$ with coefficients $(a_n)_{n\geq 1}$. Nevertheless, most of the time there is no confusion and we omit the reference to the associated Dirichlet series and write generally $D$.
	
It is known that if a Dirichlet series converges at some $w\in\C$, it converges for all $z$ with $\Re(z) > \Re(w)$, and more precisely, for every fixed non-negative real number $M$, it converges uniformly in the region $\{\Re(z)\geq \Re(w),\,|z-w|\leq M(\Re(z)-\Re(w))\}$, see \cite[page 5]{Valiron}. Let us denote by $\sigma_c$ the abscissa of convergence of a Dirichlet series, which is defined as
\[
\sigma_c = \inf \left\{r\in \R: \sum_{n=1}^\infty a_n e^{-\lambda_nz}\ \text{converges on}\ \C_r \right\},
\]
where $\C_r := \{z: \Re(z)> r\}$. We also need the following extension of the definition of $\C_r$ for $r=\pm\infty$. By convention $\C_\infty=\emptyset$ and $\C_{-\infty}=\C$.

The abscissa $\sigma_u$ of uniform convergence and $\sigma_a$ of absolute convergence are defined in a similar way. So a Dirichlet series converges (resp. uniformly, absolutely) in the right half-plane $\C_{\sigma_c}$ (resp. $\C_{\sigma_u}$, $\C_{\sigma_a}$) to a holomorphic function.

In the case $L < \infty$ the three abscissae are related by the Valiron formula (see, e.g., \cite{YDT})
\begin{equation}\label{eq:Valiron-formula}
D\leq \sigma_c \leq \sigma_u \leq \sigma_a \leq D+L.
\end{equation}
In particular, the Dirichlet series \eqref{DS} represents an entire function if and only if $D=-\infty$.

\subsection{Composition operators}

Let $X$ be a functional Banach space and suppose that all $f\in X$ have a common domain $G \subseteq\C$. Let $\vphi$ be an analytic self-map on $G$. Then $\vphi$ induces the \emph{composition operator} $C_\vphi$ on $X$ defined by
\begin{align*}
C_\vphi(f) = f\circ \vphi,\qquad \forall f\in X.
\end{align*}

The first natural and non trivial question is to know if $C_\vphi$ maps $X$ into itself, and if this is the case, what can be said about standard spectral properties of $C_\vphi$ as an operator on $X$. There is a rich literature on this topic when $X$ is the Hardy space, the Bergman space or the Dirichlet space (see for instance \cite{CM,Sh}). The situation of spaces of classical Dirichlet series received also much attention after the founding papers of Gordon--Hedenlmann \cite{GH99} and Bayart \cite{Bayart} (see for instance \cite{BQS,QS,Q}). The case of general Dirichlet series is less studied. Nevertheless, when $X$ is some weighted space of {\bf entire} Dirichlet series of type $\Lambda$ satisfying a certain property, properties of composition operators acting on $X$ are quite well understood (see \cite{DMK,HK,HHK}). In \cite{DK20}, using Liouville's theorem, it is proved that if $C_\varphi$ defines a bounded composition operator on a weighted Hilbert space of entire Dirichlet series, then $\varphi$ must be an affine function. \

The aim of this paper is to focus on the situation when our space of Dirichlet series is formed by functions which are holomorphic in {\bf some half-plane} but not necessarily the whole plane. In this context, we could not apply Liouville's theorem. Nevertheless, we will still focus on the case when $\varphi$ is a polynomial (and then necessarily $\varphi$ must be affine), since, for general symbols, we could not really hope that $C_\varphi$ maps a space of Dirichlet series into itself because the restrictions on the sequence $\Lambda$ generating the space will be too severe.\

When $\varphi(z)=az+b$, we will characterize boundedness (see Section 3) and compactness (see Section 4) of $C_\varphi$ on a weighted Dirichlet space $\mathcal H(\beta,\Lambda)$ (see next section for the definition). Then we will also study the dynamics properties (cyclicity and supercyclicity) in Section 5 of $C_\varphi$ on  $\mathcal H(\beta,\Lambda)$. In Section 6 we give some results about complex symmetry. In the final section we end with a note of how our results relate for weighted Hilbert spaces of entire Dirichlet series.

\section{\bf Weighted spaces of holomorphic Dirichlet series}

Fix $\Lambda=(\lambda_n)_{n\geq 1}$ a strictly increasing sequence of non-negative real numbers tending to $\infty$ and satisfying $L<\infty$. To perform the main object of our investigation, we need the following result from \cite{HHK}.
	
\begin{lem}\label{HHKlem}
$L < \infty$ if and only if $\dsum_{n=1}^\infty e^{-r\lambda_n} < \infty$ for all $r > L$. Furthermore, $\dsum_{n=1}^\infty e^{-r\lambda_n} = \infty$ for any $r < L$.
\end{lem}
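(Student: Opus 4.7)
My plan is to establish the two assertions via elementary comparison arguments exploiting only the definition $L = \limsup_{n\to\infty} \log n / \lambda_n$. Together they express the identity $L = \inf\{r\in\R:\sum_n e^{-r\lambda_n}<\infty\}$, with the convention $\inf\emptyset=+\infty$; from this reformulation the stated biconditional is immediate, as is the fact that the ``only if'' direction is vacuous when $L=+\infty$, a case already covered by the ``furthermore'' clause which forces divergence for every $r$.

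For the convergence claim, I would fix $r > L$ and interpose an auxiliary $r'$ with $L < r' < r$. The definition of $L$ supplies an integer $N$ such that $\log n/\lambda_n < r'$ for all $n \geq N$, which rearranges to
\[
e^{-r\lambda_n} < n^{-r/r'} \qquad (n \geq N).
\]
Since $r/r' > 1$, the majorant $p$-series converges and termwise comparison yields $\sum_n e^{-r\lambda_n} < \infty$.

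For the divergence claim, fix $r < L$. If $r \leq 0$, then $e^{-r\lambda_n} \geq 1$ for all $n$ and the series trivially diverges, so I may assume $0 < r < L$. I would argue by contradiction: if $M := \sum_n e^{-r\lambda_n}$ were finite, then, as $(\lambda_n)$ is strictly increasing and $r>0$, the terms $e^{-r\lambda_n}$ are decreasing in $n$, whence $n\,e^{-r\lambda_n}\leq \sum_{k=1}^n e^{-r\lambda_k}\leq M$. Taking logarithms, dividing by $\lambda_n$, and letting $n \to \infty$ (using $\lambda_n \to \infty$) gives $\log n/\lambda_n \leq r + (\log M)/\lambda_n$, so $L \leq r$, contradicting $r < L$.

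The main obstacle, such as it is, lies in handling the subtlety of the limsup without chasing subsequences: a naive approach would pick a subsequence $(n_k)$ with $\log n_k / \lambda_{n_k} \to L$ and try to show that the tail $\sum_k e^{-r\lambda_{n_k}}$ alone diverges, but this subsequence could be too sparse. I circumvent the difficulty in the convergence part by the cushion $r'$, and in the divergence part by exploiting the monotonicity of the full sequence to bound $n\,e^{-r\lambda_n}$ by a partial sum, so no subsequence extraction is needed in either direction.
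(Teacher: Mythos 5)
Your proof is correct. Note that the paper itself offers no argument for this lemma: it is imported verbatim from \cite{HHK}, so there is no in-paper proof to compare against; your write-up supplies the standard comparison argument (interposing $r'$ with $L<r'<r$ to dominate by a convergent $p$-series, and bounding $n\,e^{-r\lambda_n}$ by a partial sum via monotonicity for the divergence half), which is essentially the argument given in that reference. Both halves check out, including the reduction to $r>0$ (legitimate since $L\ge 0$) and the trivial case $r\le 0$. The only point worth flagging is one you already noticed: taken literally, the ``if'' direction of the biconditional is false when $L=+\infty$ (the universally quantified statement over $r>L$ is then vacuously true), so the honest content is the identity $L=\inf\{r\in\R:\sum_n e^{-r\lambda_n}<\infty\}$, which is exactly what your two claims establish.
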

	
Let $\beta = (\beta_n)_{n\geq 1}$ be a sequence of positive real numbers. The sequence space
\[
\lbsq = \left\{(a_n)_{n\geq 1}\subset\C: \sum_{n=1}^\infty |a_n|^2 \beta_n^2 < \infty\right\}
\]
is a Hilbert space with inner product defined for $a=(a_n)_{n\geq 1}, b=(b_n)_{n\geq 1}\in \lbsq$ by 
\begin{equation}\label{weight-norm}
\langle a,b\rangle = \sum_{n=1}^{\infty}a_n\overline{b}_n \beta_n^2.
\end{equation}
We put
\begin{equation}\label{beta}
\beta_* = \liminf_{n\to\infty}\frac{\log\beta_n}{\lambda_n}.
\end{equation}
The following result serves as an important motivation of our investigation.

\begin{prop}\label{c-w}
Suppose $\beta_*\ne-\infty$. If $f(z) = \dsum_{n=1}^\infty a_n e^{-\lambda_n z}$ is a Dirichlet series of type $\Lambda$ whose coefficients $(a_n)_{n\geq 1}\in\lbsq$, then $f$ converges uniformly on every compact subset of $\C_{\frac L2-\beta_*}$. In particular, $f$ is holomorphic in $\C_{\frac{L}{2}-\beta_*}$.
\end{prop}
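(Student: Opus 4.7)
The plan is to show that for any $x_0 > \tfrac{L}{2}-\beta_*$, the series $\sum_n |a_n| e^{-\lambda_n x_0}$ is finite; this immediately gives uniform convergence on $\{\Re(z)\geq x_0\}$ by the Weierstrass $M$-test, hence on every compact subset of $\C_{L/2-\beta_*}$, and holomorphy follows from the standard fact that a uniform limit of holomorphic functions is holomorphic.

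To estimate $\sum_n |a_n| e^{-\lambda_n x_0}$, I would split the weight symmetrically and apply the Cauchy--Schwarz inequality, writing
\[
\sum_{n=1}^{\infty}|a_n|e^{-\lambda_n x_0}=\sum_{n=1}^{\infty}\bigl(|a_n|\beta_n\bigr)\cdot \frac{e^{-\lambda_n x_0}}{\beta_n}
\leq \left(\sum_{n=1}^{\infty}|a_n|^2 \beta_n^2\right)^{1/2}\left(\sum_{n=1}^{\infty}\frac{e^{-2\lambda_n x_0}}{\beta_n^2}\right)^{1/2}.
\]
The first factor is finite because $(a_n)_{n\geq 1}\in\lbsq$, so everything reduces to showing the second factor is finite.

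For the second factor, the idea is to rewrite
\[
\frac{e^{-2\lambda_n x_0}}{\beta_n^2}=\exp\Bigl(-\lambda_n\bigl(2x_0+2\tfrac{\log\beta_n}{\lambda_n}\bigr)\Bigr).
\]
By the definition \eqref{beta} of $\beta_*$, for every $\eps>0$ we have $\log\beta_n/\lambda_n \geq \beta_*-\eps$ for all sufficiently large $n$, so the exponent is bounded above by $-\lambda_n(2x_0+2\beta_*-2\eps)$. Since $x_0>\tfrac{L}{2}-\beta_*$, I can choose $\eps>0$ small enough that $r:=2x_0+2\beta_*-2\eps>L$, and then Lemma~\ref{HHKlem} gives $\sum_n e^{-r\lambda_n}<\infty$. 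This bounds the tail of the second factor and completes the estimate.

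The main subtlety is just keeping track of the inequalities involving the $\liminf$ defining $\beta_*$ and choosing $\eps$ so that the threshold $r>L$ required by Lemma~\ref{HHKlem} is met; no serious obstacle is expected since the hypothesis $\beta_*\neq -\infty$ is exactly what makes $\beta_*-\eps$ a finite lower bound usable in the exponential estimate. Note also that the condition $L<\infty$ assumed throughout this section is needed implicitly, since otherwise the target half-plane $\C_{L/2-\beta_*}$ would be empty.
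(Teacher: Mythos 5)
Your proposal is correct and follows essentially the same route as the paper's proof: split the summand as $(|a_n|\beta_n)\cdot e^{-\lambda_n x_0}/\beta_n$, apply Cauchy--Schwarz, use the $\liminf$ definition of $\beta_*$ to bound $1/\beta_n^2$ by $e^{-2\lambda_n(\beta_*-\eps)}$ for large $n$, and invoke Lemma~\ref{HHKlem} with an exponent exceeding $L$. The only cosmetic difference is that you work on a closed half-plane $\{\Re(z)\geq x_0\}$ rather than directly on a compact set $K$, which is equivalent; like the paper, a word could be added about the trivial adjustment needed when $\beta_*=+\infty$.
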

	
\begin{proof}
Suppose that $\beta_*\ne \pm\infty$. Fix a compact subset $K$ of $\C_{\frac{L}{2}-\beta_*}$. Then, there exists $\eta>\frac L2-\beta^*$ such that for every $z\in K$, we have $\Re(z)\geq \eta$. Thus
\[
\sum_{n=1}^\infty \sup_{z\in K}\left|a_n e^{-\lambda_n z}\right|\leq \sum_{n=1}^\infty |a_n| e^{-\lambda_n \eta}.
\]
Apply the Cauchy-Schwarz inequality to get
\[
\sum_{n=1}^\infty \sup_{z\in K}\left|a_n e^{-\lambda_n z}\right|\leq  \left(\dsum_{n=1}^\infty |a_n|^2 \beta_n^2\right)^{1/2} \left(\dsum_{n=1}^\infty \frac{e^{-2\lambda_n \eta}}{\beta_n^2}\right)^{1/2}.
\]
Since $\eta > \frac{L}{2} - \beta_*$, we can take $0 < \eps < \delta:= \eta-\bigbracket{\frac{L}{2}-\beta_*}$. By definition of $\beta_*$, there exists $N$ such that for all $n\geq N$,
\[
\frac{1}{\beta_n^2} \leq e^{-2\lambda_n\bigbracket{\beta_* - \frac{\eps}{2}}}.
\]
Applying Lemma \ref{HHKlem} with $r=L+\delta$, we obtain
\[
\sum_{n\geq N} \frac{1}{\beta_n^2}e^{-2\lambda_n \eta}
\leq \sum_{n\geq N} e^{-2\lambda_n\bigbracket{\beta_* - \frac{\eps}{2}}}e^{-2\lambda_n \eta}
\leq \sum_{n\geq N} e^{-\lambda_n(L+\delta)} < \infty,
\]
which proves that
\[
\sum_{n=1}^\infty \sup_{z\in K}\left|a_n e^{-\lambda_n z}\right|<\infty.
\]
Thus, $f$ converges uniformly on $K$. Since this is valid for every compact $K$ in $\C_{\frac L2-\beta_*}$, we deduce that $f$ is analytic on $\C_{\frac L2-\beta_*}$.

The case when $\beta_*=\infty$ follows along the same lines with some tiny adjustments left to the reader.
\end{proof}
	
Proposition \ref{c-w} leads us to the following definition of weighted space of holomorphic Dirichlet series in $\C_{\frac{L}{2}-\beta_*}$:
\[
\Hblam = \left\{f(z) = \sum_{n=1}^\infty a_n e^{-\lambda_n z}: (a_n)_{n\geq 1} \in\lbsq\right\}.
\]
This is a Hilbert space with inner product inherited from \eqref{weight-norm}. More precisely, the inner product on $\Hblam$ is defined as
\begin{equation}\label{eq:inner-product-dirichlet}
\langle f,g\rangle=\sum_{n=1}^{\infty}a_n\overline{b}_n \beta_n^2,
\end{equation}
for every $f(z)=\dsum_{n=1}^\infty a_n e^{-\lambda_n z}, g(z)=\dsum_{n=1}^\infty b_n e^{-\lambda_n z} \in\Hblam$. Note that the inner product \eqref{eq:inner-product-dirichlet} is well defined because of the uniqueness property on coefficients for Dirichlet series of type $\Lambda$, namely if $\dsum_{n=1}^\infty a_n e^{-\lambda_n z}=0$, then $a_n=0$ for all $n\geq 1$, see \cite[page 8]{Valiron}.

\begin{rem}\label{b=-infty}
The assumption $\beta_*>-\infty$ in Proposition \ref{c-w} is important, otherwise $\C_{\frac{L}{2}-\beta_*} = \emptyset$. It is also essential because we can prove that if $\beta_* = -\infty$ then for every $z_0\in\C$ there exists a Dirichlet series of type 
$\Lambda$, $f(z)=\dsum_{n=1}^\infty a_n e^{-\lambda_n z}$ with $(a_n)_{n\geq 1}\in\lbsq$, which does not converge at $z_0$. Indeed, there exists $(n_p)_{p\geq 1}\uparrow\infty$ large enough, such that
\[
\beta_{n_p}^2 < e^{-2\lambda_{n_p} \Re(z_0)},\ \text{for all $p\ge 1$}.
\]
Take $(a_n)_{n\geq 1}$ as follows
\[
a_n=
\begin{cases}
\frac{1}{p} e^{\lambda_{n_p} z_0}, & \text{$n = n_p\ (p=1,2,\ldots)$,}\\
0, & \text{otherwise}.
\end{cases}
\]
Then,
\[
\sum_{n=1}^\infty |a_n|^2\beta_n^2 \leq \sum_{p=1}^\infty \frac{1}{p^2} e^{2\lambda_{n_p} \Re(z_0)} e^{-2\lambda_{n_p} \Re(z_0)} = \sum_{p=1}^\infty \frac{1}{p^2} < \infty.
\]
However, at $z_0$ we have
\[
\sum_{n=1}^\infty a_n e^{-\lambda_n z_0} = \sum_{p=1}^\infty \frac{1}{p} e^{\lambda_{n_p} z_0}e^{-\lambda_{n_p} z_0} = \sum_{p=1}^\infty \frac{1}{p} = \infty.
\]
\end{rem}

Note that for the case $\beta_* = \infty$, we have entire Dirichlet series which have been studied quite well (see, e.g., \cite{DK20} and related references).  {\bf{Therefore, in the sequel we assume that the condition $\beta_*\ne\pm\infty$ always holds.}}


In the rest of the paper, we adopt the following notation. For $n\geq 1$, $z\in\mathbb C$,
\begin{equation}\label{bon}
q_n(z)=\frac{1}{\beta_n}e^{-\lambda_n z}.
\end{equation}

\begin{prop}\label{prop:ONB}
The sequence $(q_n)_{n\geq 1}$ forms an orthonormal basis of $\Hblam$.
\end{prop}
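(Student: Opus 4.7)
The proof hinges on translating everything to the coefficient level, where $\calH(\beta,\Lam)$ is isometrically isomorphic to $\lbsq$ via the map $\sum a_n e^{-\lambda_n z} \mapsto (a_n)_{n\ge 1}$ (the map is well defined thanks to the uniqueness of Dirichlet coefficients invoked right after \eqref{eq:inner-product-dirichlet}). Under this identification, $q_n$ corresponds to the sequence $\frac{1}{\beta_n} e_n$, where $e_n$ is the $n$-th standard unit vector. So the claim reduces to the well-known fact that $\bigl(\frac{1}{\beta_n} e_n\bigr)_{n\ge 1}$ is an orthonormal basis of $\lbsq$ equipped with the weighted inner product \eqref{weight-norm}.

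First I would check orthonormality by direct computation. Using \eqref{eq:inner-product-dirichlet}, for any $m,n\geq 1$,
\[
\iprod{q_m,q_n} = \sum_{k=1}^\infty \frac{\delta_{km}}{\beta_m}\cdot\frac{\delta_{kn}}{\beta_n}\cdot \beta_k^2 = \frac{\delta_{mn}}{\beta_m\beta_n}\,\beta_n^2 = \delta_{mn}.
\]
This is a one-line verification.

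Next I would establish totality (which, combined with orthonormality, yields that $(q_n)$ is an orthonormal basis). The cleanest way is to show that for any $f(z) = \sum_{n=1}^\infty a_n e^{-\lambda_n z} \in \Hblam$, the partial sums
\[
S_N = \sum_{n=1}^N a_n e^{-\lambda_n z} = \sum_{n=1}^N (a_n \beta_n)\, q_n
\]
converge to $f$ in the norm of $\Hblam$. Indeed, $\norm{f - S_N}^2 = \sum_{n>N}|a_n|^2\beta_n^2$, which tends to $0$ by definition of membership in $\lbsq$. This simultaneously identifies $a_n\beta_n = \iprod{f,q_n}$ as the generalized Fourier coefficient of $f$ with respect to $q_n$, so Parseval's identity $\norm{f}^2 = \sum_{n\ge 1} |\iprod{f,q_n}|^2$ holds, confirming that $(q_n)$ is a complete orthonormal system.

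There is no genuine obstacle here; the only subtlety worth flagging is the legitimacy of reading off coefficients from the Dirichlet sum, but this is precisely the uniqueness theorem for Dirichlet series of type $\Lam$ already cited in the paper (\cite[page 8]{Valiron}), which guarantees that the coefficient map $f\mapsto (a_n)$ is injective and hence that the computations above are unambiguous.
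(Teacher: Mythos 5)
Your proof is correct and follows exactly the route the paper intends: the paper simply declares the result ``immediate from \eqref{eq:inner-product-dirichlet}'', and your computation of $\iprod{q_m,q_n}=\delta_{mn}$ together with the norm convergence of the partial sums is precisely the verification being left to the reader. Nothing to add.
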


\begin{proof}
It is immediate from \eqref{eq:inner-product-dirichlet}.
\end{proof}

Note that the proof of Proposition~\ref{c-w} shows that for every point $w\in\C_{\frac{L}{2}-\beta_*}$, the evaluation functional $\delta_w$ is continuous on $\Hblam$. Furthermore, using Proposition~\ref{prop:ONB}, we can compute the kernel $k_w$ at point $w\in\C_{\frac{L}{2}-\beta_*}$ by
\begin{align*}
k_w(z)=&\sum_{n=1}^\infty \langle k_w,q_n\rangle q_n(z) = \sum_{n=1}^\infty \overline{q_n(w)}q_n(z)\\
=& \sum_{n=1}^\infty \frac{1}{\beta_n^2} e^{-\lambda_n (z+\overline{w})}.
\end{align*}
Thus we obtain the following result.

\begin{prop}\label{H-repker}
The spaces $\Hblam$ are all reproducing kernel Hilbert spaces with reproducing kernel
\begin{equation}\label{ker-H}
K(z,w) = k_w(z) = \sum_{n=1}^\infty \frac{1}{\beta_n^2} e^{-\lambda_n(z+\overline{w})}, \quad z,w\in\C_{\frac{L}{2}-\beta_*}.
\end{equation}
\end{prop}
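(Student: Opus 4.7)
The strategy is the standard reproducing kernel Hilbert space argument: use Proposition \ref{prop:ONB} to expand any element of $\Hblam$ in the orthonormal basis $(q_n)_{n\geq 1}$, and identify the kernel by computing the coefficients of the Riesz representative of the evaluation functional at an arbitrary $w\in \C_{\frac{L}{2}-\beta_*}$.

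More concretely, my plan proceeds in three steps. First, I would note that the estimates established in the proof of Proposition \ref{c-w} already imply that for each fixed $w\in \C_{\frac{L}{2}-\beta_*}$, the evaluation functional $\delta_w\colon f\mapsto f(w)$ is bounded on $\Hblam$; indeed, that proof gives an explicit Cauchy--Schwarz bound
\[
|f(w)|\leq \left(\sum_{n=1}^\infty |a_n|^2 \beta_n^2\right)^{1/2}\left(\sum_{n=1}^\infty \frac{e^{-2\lambda_n \Re(w)}}{\beta_n^2}\right)^{1/2},
\]
with the second factor finite precisely when $\Re(w)>\tfrac{L}{2}-\beta_*$. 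By the Riesz representation theorem, there is a unique $k_w\in\Hblam$ such that $f(w)=\langle f,k_w\rangle$ for all $f\in\Hblam$. Second, since $(q_n)_{n\geq 1}$ is an orthonormal basis by Proposition \ref{prop:ONB}, I expand
\[
k_w=\sum_{n=1}^\infty \langle k_w,q_n\rangle\, q_n=\sum_{n=1}^\infty \overline{\langle q_n,k_w\rangle}\,q_n=\sum_{n=1}^\infty \overline{q_n(w)}\,q_n,
\]
using the reproducing property applied to $q_n$. Substituting $q_n(z)=\beta_n^{-1}e^{-\lambda_n z}$ and $\overline{q_n(w)}=\beta_n^{-1}e^{-\lambda_n\overline{w}}$ yields the claimed formula
\[
K(z,w)=\sum_{n=1}^\infty \frac{1}{\beta_n^2}e^{-\lambda_n(z+\overline{w})}.
\]

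The only step that requires genuine verification, and the closest thing to an obstacle, is checking that this series in fact defines an element of $\Hblam$ and converges at $z\in\C_{\frac{L}{2}-\beta_*}$. Viewing $K(\cdot,w)$ as the Dirichlet series with coefficients $a_n=\beta_n^{-2}e^{-\lambda_n\overline{w}}$, membership in $\lbsq$ amounts to the convergence of $\sum \beta_n^{-2}e^{-2\lambda_n\Re(w)}$, which is exactly the sum controlled in the proof of Proposition \ref{c-w} under the hypothesis $\Re(w)>\tfrac{L}{2}-\beta_*$ using Lemma \ref{HHKlem}. Pointwise convergence of the series in $z$ for $z\in\C_{\frac{L}{2}-\beta_*}$ then follows from Proposition \ref{c-w} itself. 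Finally, uniqueness of $k_w$ together with the uniqueness of Dirichlet coefficients cited from \cite{Valiron} ensures that this formula genuinely represents the reproducing kernel.
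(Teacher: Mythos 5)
Your proposal is correct and follows exactly the paper's argument: continuity of $\delta_w$ is deduced from the Cauchy--Schwarz estimate in the proof of Proposition \ref{c-w}, and the kernel is then computed by expanding the Riesz representative $k_w$ in the orthonormal basis $(q_n)_{n\geq 1}$ via $\langle k_w,q_n\rangle=\overline{q_n(w)}$. Your extra verification that the resulting coefficient sequence lies in $\lbsq$ is a harmless elaboration of the same estimate.
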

In particular, we deduce the norm of the kernel
\begin{equation}
\|k_w\|^2 = \sum_{n=1}^\infty \frac{1}{\beta_n^2} e^{-2\lambda_n\Re(w)}, \quad w\in\C_{\frac{L}{2}-\beta_*}.
\end{equation}

\section{\bf Bounded composition operators induced by a polynomial}
	
Since $\Hblam$ is a functional Hilbert space (in which evaluations are continuous), it follows easily from the closed graph theorem that the space $\Hblam$ is \textit{invariant} under a composition operator $C_\varphi$, i.e. $C_\varphi\big(\Hblam\big)\subseteq \Hblam$,  if and only if $C_\varphi$ is bounded on $\Hblam$.

The study of boundedness is based on the following two simple lemmas. A version of the first one appears in \cite{GH99} but for the completeness we give a proof.
\begin{lem}\label{branch}
Let $f$ be holomorphic in some half-plane $C_\theta=\{z:\Re(z)>\theta\}$ ($\theta\in\mathbb{R}$) and suppose that
\begin{equation}\label{eq:converge-branch}
\lim_{\Re(z)\to\infty}e^{f(z)}=b,
\end{equation}
for some $b\in\mathbb{C^*}:=\mathbb{C}\setminus\{0\}$. Then, there exists a branch of the logarithm $\log_{\alpha}$ and there exists $\ell\in\mathbb Z$ such that
\begin{equation}\label{eq:limite-log}
\lim_{\Re(z)\to\infty}f(z)=\log_{\alpha}(b)+2i\pi \ell.
\end{equation}
\end{lem}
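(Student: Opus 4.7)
The plan is to ``unwrap'' $f$ modulo $2i\pi$ using a locally-defined branch of the logarithm near $b$, exploiting connectedness of half-planes to force the unwrapping to be globally consistent.

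First, since $b\neq 0$ and $e^{f(z)}\to b$ as $\Re(z)\to\infty$, I would pick $R>\theta$ large enough that for every $z$ with $\Re(z)>R$, the value $e^{f(z)}$ lies in a small open disc $U$ centered at $b$ whose closure does not contain $0$. I then choose $\alpha\in\R$ so that the slit $e^{i\alpha}[0,\infty)$ misses $U$, which yields a holomorphic branch $\log_\alpha$ of the logarithm defined on an open neighborhood of $\overline{U}$, in particular satisfying $e^{\log_\alpha(w)}=w$ on that neighborhood.

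Next, on the connected half-plane $\C_R=\{\Re(z)>R\}$ I would consider the holomorphic function
\[
g(z):=\log_\alpha\bigl(e^{f(z)}\bigr),
\]
which is well defined because $e^{f(z)}\in U$. Since $e^{g(z)}=e^{f(z)}$ on $\C_R$, the continuous function
\[
h(z):=\frac{f(z)-g(z)}{2i\pi}
\]
takes values in $\Z$. As $\C_R$ is connected, $h$ is constant, so $h\equiv \ell$ for some $\ell\in\Z$ and $f(z)=g(z)+2i\pi\ell$ on $\C_R$.

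Finally, the hypothesis $e^{f(z)}\to b$ combined with continuity of $\log_\alpha$ at $b$ gives $g(z)\to\log_\alpha(b)$ as $\Re(z)\to\infty$, and hence $f(z)\to \log_\alpha(b)+2i\pi\ell$, which is exactly \eqref{eq:limite-log}. There is no serious obstacle here; the only point requiring attention is the use of the connectedness of $\C_R$ (rather than of the original $C_\theta$) to pass from a locally constant integer-valued function to a globally constant one, which is automatic since half-planes are connected.
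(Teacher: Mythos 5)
Your proof is correct and follows essentially the same route as the paper: restrict to a far-enough half-plane where $e^{f}$ stays in a disc around $b$ avoiding a slit $L_\alpha$, observe that $\bigl(f-\log_\alpha(e^{f})\bigr)/(2i\pi)$ is integer-valued hence constant on the connected half-plane, and conclude by continuity of $\log_\alpha$ at $b$. The only cosmetic difference is that you invoke connectedness plus continuity to get constancy, whereas the paper uses the open mapping theorem; both are fine.
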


Here we denote $\log_{\alpha}(w)=\log|w|+i\arg_{(\alpha,\alpha+2\pi)}(w)$ for all $w\in\mathbb C\setminus L_\alpha$, and $L_\alpha=\{re^{i\alpha}: r\geq 0\}$.

\begin{proof}
First note that since $b\in\mathbb C^*$, there exists $\alpha\in [0,2\pi)$ such that $b\in \mathbb C\setminus L_\alpha$, and  since $\mathbb C\setminus L_\alpha$ is open, there exists $\delta_1>0$ such that the open disc $D(b,\delta_1)$ is contained in $\mathbb C\setminus L_\alpha$. Now using \eqref{eq:converge-branch}, we can find $R_0>\theta$ such that
\[
\Re(z)>R_0 \Longrightarrow e^{f(z)}\in D(b,\delta_1)\subset \mathbb C\setminus L_\alpha.
\]
In particular, $\log_{\alpha}(e^{f(z)})$ is well defined on $\mathbb C_{R_0}$ and the function
\[
z\longmapsto \frac{\log_{\alpha}(e^{f(z)})-f(z)}{2i\pi}
\]
is holomorphic on $\mathbb C_{R_0}$ and its values are contained in $\mathbb Z$. The open mapping theorem (for holomorphic functions) implies that there exists $\ell\in\mathbb Z$ such that for all $z\in\mathbb C_{R_0}$, we have $\log_\alpha(e^{f(z)})=f(z)-2i\pi\ell$.

Now let $\varepsilon>0$. Then (by continuity of $\log_\alpha$) we can find $\delta_2>0$ such that
\begin{equation}\label{eq2:limite-log}
|w-b|<\delta_2\Longrightarrow |\log_{\alpha}(w)-\log_\alpha(b)|<\varepsilon.
\end{equation}
Using \eqref{eq:converge-branch} once more, we can find $R_1>\theta$ such that
\[
\Re(z)>R_1\Longrightarrow |e^{f(z)}-b|<\delta_2,
\]
and by \eqref{eq2:limite-log}, we get
\[
\Re(z)>R_1\Longrightarrow |\log_\alpha(e^{f(z)})-\log_{\alpha}(b)|<\varepsilon.
\]
We thus deduce that
\[
\Re(z)>\max(R_1,R_0)\Longrightarrow |f(z)-2i\pi\ell-\log_{\alpha}(b)|<\varepsilon,
\]
which gives the result.
\end{proof}

The second lemma is similar to one that appears in \cite{HHK} for the case of entire Dirichlet series.

\begin{lem}\label{e-branch}
Let $f(z)=\dsum_{n=1}^\infty a_n e^{-\lambda_n z}$ be a Dirichlet series with $L < \infty$ and $D \neq \infty$. If $a_m$ is the first non-zero coefficient, then
\[
\lim_{\Re(z)\to\infty} e^{\lambda_m z}f(z) = a_m.
\]
\end{lem}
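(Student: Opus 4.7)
The plan is to extract the constant term by multiplying through and then show that the resulting tail tends to $0$ as $\Re(z)\to\infty$. Writing
\[
e^{\lambda_m z}f(z) = a_m + \sum_{n=m+1}^\infty a_n e^{-(\lambda_n-\lambda_m)z},
\]
the problem reduces to proving that the tail series vanishes in the limit. The hypothesis $D\neq\infty$ together with $L<\infty$ is what makes this possible, because by the Valiron formula \eqref{eq:Valiron-formula} the abscissa of absolute convergence satisfies $\sigma_a\leq D+L<\infty$. So I fix some $r>\sigma_a$, guaranteeing that $S:=\sum_{n\geq m+1}|a_n|e^{-\lambda_n r}<\infty$.

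Next, for $z$ with $x:=\Re(z)>r$, I would estimate the tail by factoring out the worst exponent. Since the sequence $(\lambda_n)$ is strictly increasing, for all $n\geq m+1$ the quantity $\lambda_n-\lambda_m$ is positive and bounded below by $\lambda_{m+1}-\lambda_m>0$. This lets me write
\[
\Bigl|\sum_{n=m+1}^\infty a_n e^{-(\lambda_n-\lambda_m)z}\Bigr|
\leq \sum_{n=m+1}^\infty |a_n| e^{-(\lambda_n-\lambda_m)r}\, e^{-(\lambda_n-\lambda_m)(x-r)}
\leq e^{-(\lambda_{m+1}-\lambda_m)(x-r)}\cdot e^{\lambda_m r}\, S.
\]
As $x\to\infty$, the prefactor $e^{-(\lambda_{m+1}-\lambda_m)(x-r)}$ tends to $0$, while the remaining factor is a fixed finite constant. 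Therefore the tail tends to $0$ uniformly as $\Re(z)\to\infty$, and we conclude $\lim_{\Re(z)\to\infty}e^{\lambda_m z}f(z)=a_m$.

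There is no real obstacle here; the only subtlety is that one cannot na\"\i vely bound $\sum |a_n|e^{-(\lambda_n-\lambda_m)x}$ by $e^{\lambda_m x}\sum |a_n|e^{-\lambda_n x}$ and push the limit inside, since the prefactor $e^{\lambda_m x}$ blows up. The fix is the factorization at a fixed intermediate abscissa $r>\sigma_a$, which converts the divergent $e^{\lambda_m x}$ into the harmless constant $e^{\lambda_m r}$ and isolates the decaying factor $e^{-(\lambda_{m+1}-\lambda_m)(x-r)}$ coming from the strict gap $\lambda_{m+1}>\lambda_m$.
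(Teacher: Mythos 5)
Your proof is correct, but it takes a different route from the paper's. The paper treats $e^{\lambda_m z}f(z)=\sum_{n\geq m}a_ne^{-(\lambda_n-\lambda_m)z}$ as a new Dirichlet series of type $(\lambda_n-\lambda_m)_{n\geq m}$, checks that its parameters satisfy $L'=L$ and $D'=D$, invokes the Valiron inequalities again to get $\sigma_u'\neq\infty$, and then uses uniform convergence in a half-plane to interchange the limit $\Re(z)\to\infty$ with the sum term by term. You instead apply Valiron only once, to the original series, to secure a finite abscissa of absolute convergence; you then fix $r>\sigma_a$ and obtain the explicit bound
\[
\Bigl|\sum_{n=m+1}^\infty a_n e^{-(\lambda_n-\lambda_m)z}\Bigr|\leq e^{-(\lambda_{m+1}-\lambda_m)(\Re(z)-r)}\,e^{\lambda_m r}\sum_{n\geq m+1}|a_n|e^{-\lambda_n r},
\]
which kills the tail because of the strict gap $\lambda_{m+1}>\lambda_m$. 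Your argument is more elementary (no need to compute $L'$ and $D'$ or to justify a limit--sum interchange) and it yields a quantitative exponential rate of convergence rather than bare convergence; the paper's version is slightly more structural in that it recognizes the shifted series as again being of the same class, a viewpoint that recurs elsewhere in the paper. One small point worth making explicit in your write-up: $D\neq\infty$ together with $L<\infty$ gives $\sigma_a\leq D+L<\infty$ even when $D=-\infty$, so the choice of a finite $r>\sigma_a$ is always possible. Both proofs are sound.
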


\begin{proof}
Let $f(z) = \dsum_{n=m}^\infty a_n e^{-\lam_n z}$ be a Dirichlet series where $a_m\neq 0$. Then we have
\begin{equation}\label{nonzero-coff}
e^{\lambda_m z}f(z) = \sum_{n\geq m} a_n e^{-(\lambda_n - \lambda_m) z}.
\end{equation}
Notice that $\lambda_n - \lambda_m \geq 0$ for all $n \geq m$, so that the right-hand side of \eqref{nonzero-coff} is a Dirichlet series of type $(\lambda_n-\lambda_m)_{n\geq m}$. We claim that the associated abscissa of uniform convergence $\sigma_u'$ satisfies $\sigma_u' \neq \infty$.
	
We show that the associated $L'$ and $D'$ to the Dirichlet series above satisfy $L' = L$ and $D' = D$. Indeed, since $\displaystyle \lim_{n\to\infty} \frac{\lambda_n}{\lambda_n - \lambda_m} = 1$, it follows that
\[
L' = \limsup_{n\to\infty} \frac{\log n}{\lambda_n - \lambda_m} = \limsup_{n\to\infty} \bigbracket{\frac{\log n}{\lambda_n} \cdot \frac{\lambda_n}{\lambda_n - \lambda_m}} = L < \infty.
\]
and similarly $D' = D$.

Since $L < \infty$ and $D\neq \infty$, It follows from \eqref{eq:Valiron-formula} that $\sigma_u' \neq \infty$, so that $e^{\lambda_m z}f(z)$ is uniformly convergent in some half-plane. Therefore we may interchange limit and sum to obtain
\begin{align*}
\lim_{\Re(z)\to\infty} e^{\lambda_m z}f(z) &= \lim_{\Re(z)\to\infty} \sum_{n\geq m} a_n e^{-(\lambda_n - \lambda_m) z}\\
&= \sum_{n\geq m} \lim_{\Re(z)\to\infty}\bigbracket{a_n e^{-(\lambda_n - \lambda_m) z}} = a_m.
\end{align*}
\end{proof}

\subsection{Important necessary condition for bounded $C_\varphi$ with general symbols}

In this subsection, we obtain an important necessary condition which a general symbol $\varphi$ inducing a bounded composition operator $C_\varphi$ must satisfy. See \cite{DK20,HHK} for analogous results in the entire case. First, note that in the case when $\beta_*\ne\pm \infty$, all $f\in \Hblam$ converge on the proper right half-plane $\C_{\frac{L}{2}-\beta_*}$ and in particular, satisfy $D_f\ne \infty$ (by Valiron's formulae). Hence Lemma \ref{e-branch} can be applied to elements of $\Hblam$.
	
\begin{prop}\label{gen-symbol}
Let $\beta_*\neq \pm\infty$ and $\varphi$ be a holomorphic self-map on $\C_{\frac{L}{2}-\beta_*}$. Suppose that the operator $C_\varphi$ is bounded on $\Hblam$. Then the following condition holds
\begin{equation}\label{nec-gen}
\lim_{\Re(z)\to\infty} \frac{\varphi(z)}{z} = A,
\end{equation}
where $A$ is a real constant such that
\begin{enumerate}
\item if $\lambda_1 > 0$, then $A \geq 1$, while
\item if $\lambda_1 = 0$, then either $A=0$ or $A\geq 1$.
\end{enumerate}
Moreover, if for each $k\in\N$ we denote by $m_k$ the index of the first non-zero term of $C_\varphi (e^{-\lambda_k z}) = e^{-\lambda_k \varphi(z)}$ in the Dirichlet series representation \eqref{DS}, then
\begin{equation}\label{constA}
\lambda_{m_k}=A \lambda_k,
\end{equation}
for all $k\in\N$.
\end{prop}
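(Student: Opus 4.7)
The plan is to exploit the boundedness of $C_\varphi$ in the simplest possible way: feed it the basis elements $e^{-\lambda_k z}=\beta_k q_k\in\Hblam$, so that each image $e^{-\lambda_k\varphi(z)}$ lies in $\Hblam$ and therefore admits an expansion $e^{-\lambda_k\varphi(z)}=\sum_{n\geq 1}a_n^{(k)}e^{-\lambda_n z}$ with $(a_n^{(k)})_{n}\in\lbsq$. Since $e^{-\lambda_k\varphi(z)}\not\equiv 0$, there is a well-defined first non-zero coefficient $a_{m_k}^{(k)}$, which is how the index $m_k$ enters the picture.

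Next I would invoke Lemma~\ref{e-branch} applied to this Dirichlet series; its hypotheses are satisfied because $L<\infty$ by assumption on $\Lambda$, and $D\neq\infty$ follows from $(a_n^{(k)})_{n}\in\lbsq$ together with the standing assumption $\beta_*>-\infty$. The lemma yields $\lim_{\Re(z)\to\infty}e^{\lambda_{m_k}z-\lambda_k\varphi(z)}=a_{m_k}^{(k)}\in\C\setminus\{0\}$. I then apply Lemma~\ref{branch} to $f_k(z):=\lambda_{m_k}z-\lambda_k\varphi(z)$ with $b:=a_{m_k}^{(k)}$ to conclude that $\lim_{\Re(z)\to\infty}f_k(z)=:C_k\in\C$ exists. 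Dividing by $z$ and using $|z|\geq\Re(z)\to\infty$ gives, whenever $\lambda_k>0$, the equality $\lim_{\Re(z)\to\infty}\varphi(z)/z=\lambda_{m_k}/\lambda_k$. The left-hand side does not depend on $k$, so calling the common value $A$ I obtain at once the existence of the limit, the fact that it is real and non-negative, and the identity $\lambda_{m_k}=A\lambda_k$. The corner case $\lambda_k=0$ (which by strict monotonicity can occur only for $k=1$) is trivial: $e^{-\lambda_1\varphi(z)}\equiv 1$ forces $m_1=1$ and the relation $\lambda_{m_1}=A\lambda_1=0$ holds tautologically.

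To obtain the stated constraints on $A$, I would pick the smallest $k_0$ with $\lambda_{k_0}>0$; by strict monotonicity $k_0=1$ precisely when $\lambda_1>0$ and $k_0=2$ otherwise. In the first case, the identity $\lambda_{m_1}=A\lambda_1$ together with $m_1\geq 1$ and monotonicity of $\Lambda$ yields $A=\lambda_{m_1}/\lambda_1\geq 1$. In the second case, $\lambda_{m_2}=A\lambda_2$ with $m_2\geq 1$ gives either $m_2=1$ (forcing $\lambda_{m_2}=\lambda_1=0$ and hence $A=0$) or $m_2\geq 2$ (yielding $\lambda_{m_2}\geq\lambda_2$, i.e.\ $A\geq 1$).

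The main obstacle I anticipate is linking the two asymptotic lemmas cleanly: the passage from $e^{f_k(z)}\to b$ to $f_k(z)\to C_k$ is not automatic (one must fix a branch of the logarithm and discard the $2i\pi\Z$ indeterminacy, which is precisely what Lemma~\ref{branch} does), and then the division by $z$ to extract $\varphi(z)/z\to A$ relies on the limit $\Re(z)\to\infty$ without any control on $\Im(z)$, so one has to verify honestly that $C_k/z\to 0$ and the $o(1)/z$ error term vanish in this unrestricted sense, using only $|z|\geq\Re(z)$.
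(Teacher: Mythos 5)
Your proposal is correct and follows essentially the same route as the paper's proof: probe with the basis functions $e^{-\lambda_k z}$, apply Lemma~\ref{e-branch} to get $e^{\lambda_{m_k}z-\lambda_k\varphi(z)}\to b^{(k)}_{m_k}\neq 0$, upgrade via Lemma~\ref{branch} to convergence of $\lambda_{m_k}z-\lambda_k\varphi(z)$, divide by $z$, and split into the cases $\lambda_1>0$ and $\lambda_1=0$. The only (harmless) difference is that you make explicit the verification that the error terms vanish after dividing by $z$ and that $m_1=1$ is forced when $\lambda_1=0$, points the paper handles implicitly.
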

	
\begin{proof}
\textbf{- Case 1: $\lambda_1 > 0$.} For each $k\in\N$, $C_\varphi (e^{-\lambda_k z}) \in \Hblam$, and hence we can represent it as
\[
C_\varphi (e^{-\lambda_k z}) = e^{-\lambda_k\varphi(z)} = \sum_{n\geq m_k} b_n^{(k)} e^{-\lambda_n z},
\]
where, by assumption, $m_k$ is the smallest integer such that $b_{m_k}^{(k)} \neq 0$, and $(b_n^{(k)})_n\in\ell_\beta^2$.
		
By Lemma \ref{e-branch}, we have
\[
\lim_{\Re(z)\to\infty}{e^{\lambda_{m_k}z} e^{-\lambda_k\varphi(z)}} = b_{m_k}^{(k)}.
\]
Applying Lemma \ref{branch} to $f(z)=\lambda_{m_k} z - \lambda_k\varphi(z)$, there exist a branch of the logarithm $\log_{\alpha}$ and $\ell\in\mathbb{Z}$, such that
\[
\lim_{\Re(z)\to\infty}\big(\lambda_{m_k} z - \lambda_k\varphi(z)\big) = \log_{\alpha}\big(b_{m_k}^{(k)}\big)+2i\pi \ell.
\]
Thus, we get
\[
\lim_{\Re(z)\to\infty} \left(\frac{\lambda_{m_k}}{\lambda_k} - \frac{\varphi(z)}{z}\right) = 0,
\]
that is
\[
\lim_{\Re(z)\to\infty} {\frac{\varphi(z)}{z}} = \frac{\lambda_{m_k}}{\lambda_k} \ge 0.
\]
This shows that $\dfrac{\lambda_{m_k}}{\lambda_k}$ is independent of $k$, and hence we can denote this non-negative constant as $A$.

In particular, for $k=1$, we have $A = \dfrac{\lambda_{m_1}}{\lambda_1} \geq 1$, because $m_1 \geq 1$.

\textbf{- Case 2: $\lambda_1 = 0$.} In this case, replacing $k\in\N$ by $k\ge 2$ in the proof for Case 1 above, we still have a non-negative constant $A = \dfrac{\lambda_{m_k}}{\lambda_k}$, for all $k\geq 2$. In particular, for $k=2$, we have
\[
A = \frac{\lambda_{m_2}}{\lambda_2} =
\begin{cases}
0, & \text{if $m_2 = 1$ (because $\lambda_1=0$)},\\
\geq 1, & \text{if $m_2\geq 2$ (because $\lambda_{m_2} \ge \lambda_2$)}.
\end{cases}
\]
Finally, in that case, we have $m_1=1$ and thus $\lambda_{m_1}=0=A\lambda_1$.
\end{proof}

\begin{rem}
If $A\geq 1$, it follows immediately from \eqref{constA} that the map $k\longmapsto m_k$ is strictly increasing.
\end{rem}

\subsection{Necessary conditions for boundedness of $C_\varphi$: a polynomial symbol}

In case the symbol $\varphi$ is a polynomial, we can get a crucial information: $\varphi$ must be an affine function.

To study this case, for a composition operator to be well-defined, we first need the following result.

\begin{lem}\label{affine-sel-map}
An affine function $az+b$ ($a,b\in\mathbb{C}$) is a self-map of the half-plane $\mathbb{C}_\theta$ ($\theta\in\mathbb{R}$) if and only if
\begin{itemize}
\item[(i)] $a\in\R^+$ and $\Re(b) \geq (1-a)\theta$, or
\item[(ii)] $a = 0$ and $\Re(b) > \theta$.
\end{itemize}
\end{lem}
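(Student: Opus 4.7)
The plan is to reduce the self-map condition to a direct computation of $\Re(az+b)$ as a function of $\Re(z)$ and $\Im(z)$, and then extract the constraints on $a$ and $b$ coordinate by coordinate.

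Writing $a=\alpha+i\gamma$ with $\alpha,\gamma\in\R$ and $z=x+iy$ with $x>\theta$, I would compute
\[
\Re(az+b) = \alpha x - \gamma y + \Re(b).
\]
The requirement is that this quantity be $>\theta$ for every $x>\theta$ and every $y\in\R$. First I would note that, for any fixed admissible $x$, letting $y\to\pm\infty$ forces $\gamma=0$; thus $a$ must be real. Next, letting $x\to +\infty$ (and now $\gamma=0$), we must have $a\geq 0$, since otherwise $\alpha x+\Re(b)\to -\infty$. This leaves the two cases $a=0$ and $a>0$.

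If $a=0$, then $\Re(az+b)=\Re(b)$ must exceed $\theta$, giving exactly condition (ii). If $a>0$, then taking the infimum over $x>\theta$ yields the necessary condition $a\theta+\Re(b)\geq\theta$, i.e.\ $\Re(b)\geq(1-a)\theta$. For sufficiency, I would simply verify that under (i) we have, for any $x>\theta$,
\[
\Re(az+b)=ax+\Re(b) > a\theta+\Re(b) \geq \theta,
\]
(the first inequality strict because $a>0$ and $x>\theta$), and under (ii) the value $\Re(b)>\theta$ trivially lies in $\mathbb{C}_\theta$.

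The only subtle point — and the one place I would be careful — is keeping track of strict versus non-strict inequalities: the half-plane is open, so the image condition is a strict inequality, yet the condition on $\Re(b)$ in case (i) is only a weak inequality, because the strict gap in $\alpha x>\alpha\theta$ supplies the missing strictness. No genuine obstacle is expected; the lemma is essentially the elementary observation that a non-constant real-affine map $x\mapsto ax+\Re(b)$ sends $(\theta,\infty)$ into $(\theta,\infty)$ iff $a>0$ and $a\theta+\Re(b)\geq\theta$, combined with the rigidity forcing $a\in\R$.
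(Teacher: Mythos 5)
Your proof is correct and follows essentially the same route as the paper's: both arguments send $\Im(z)\to\pm\infty$ to force $a$ real, send $\Re(z)\to+\infty$ to rule out $a<0$, and take the limit/infimum as $\Re(z)\to\theta^+$ to obtain the weak inequality $\Re(b)\geq(1-a)\theta$ (the paper merely writes $a=|a|e^{i\theta_1}$ instead of $a=\alpha+i\gamma$). Your remark about where the strictness is recovered in case (i) is exactly the point the paper's converse verification relies on.
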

	
\begin{proof}
Suppose $az+b$ is a self-map on $\C_\theta$. If $a = 0$, then it clear that $\Re(b) > \theta$, because $b\in\C_\theta$. So we get (ii).

In case $a\ne0$, we show that $a\in\R^+$. Assume to the contrary that $a\not\in\R^+$, which means that $a=|a|e^{i\theta_1}$, with $\theta_1\neq 0 \mod(2\pi)$. For every $z=x+i y$, $x,y\in\R$, we have $\Re(az)=x|a|\cos(\theta_1)-y|a|\sin(\theta_1)$.

\textbf{Case 1:} If $\theta_1=\pi \mod(2\pi)$, then $\Re(az)=-x|a|$, and so letting $\Re(z)=x\to \infty$, we get that $\Re(az+b)\to -\infty$, which contradicts that $az+b$ is a self-map of $\C_{\theta}$.

\textbf{Case 2:} If $\theta_1\neq \pi \mod(2\pi)$, then $\sin(\theta_1)\neq 0$. Thus if we fix $x > \theta$ and let $\Im(z) = y\to \infty$ (if $\sin(\theta_1)>0$) or $\Im(z) = y\to -\infty$ (if $\sin(\theta_1)<0$), we get that $\Re(az+b)\to -\infty$, which also contradicts the assumption.

Thus we have $a\in\R^+$. Then for any $\eps > 0$, a point $z \in \mathbb{C}_\theta$ with $\Re(z) = \theta + \eps$ satisfies $\Re(az + b) = a\theta + a\eps + \Re(b) > \theta$. This means $\Re(b) > (1-a)\theta - a\eps$. Letting $\eps\to0$, we obtain (i).

Conversely, if (ii) holds, i.e. $a=0$ and $\Re(b) > \theta$, then we are obviously done. If (i) holds, i.e. $a\in\R^+$ and $\Re(b) \geq (1-a)\theta$, then for any $z\in\C_\theta$, we have $\Re(az+b)=a\Re(z)+\Re(b)>a\theta+\Re(b)\geq a\theta+(1-a)\theta=\theta$, which shows that $az+b\in\mathbb{C}_\theta$. Thus $az+b$ is a self-map of $\mathbb{C}_\theta$.
\end{proof}

Now we have the following result.

\begin{thm}\label{affine-symbol}
Let $\beta_*\neq \pm\infty$ and $\varphi$ be a self-map polynomial of $\mathbb{C}_{\frac{L}{2}-\beta_*}$. Suppose that the operator $C_\varphi$ is bounded on $\Hblam$. Then the following assertions hold:
\begin{enumerate}
\item If $\lambda_1 > 0$, then $\varphi(z) = az+b,\ a\geq 1,\ b\in\C$.
\item If $\lambda_1 = 0$, then either $\varphi(z) = b,\ b\in\C$ (that is $a=0$), or $\varphi(z) = az+b,\ a\geq 1,\ b\in\C$.
\end{enumerate}
Moreover, in addition, $a$ satisfies the condition
\begin{equation}\label{const-a}
\lambda_{m_k}=a \lambda_k,\quad\text{for all $k\in\N$},
\end{equation}
where $m_k$ is the index of the first non-zero term of $C_{az+b}(e^{-\lambda_k z}) = e^{-\lambda_k (az+b)}$ in the Dirichlet series representation \eqref{DS}, and $b$ satisfies either of the following conditions:
\begin{equation}\label{a,b-1}
\Re(b) > \frac{L}{2}-\beta_*, \quad a=0,
\end{equation}
\begin{equation}\label{a,b-2}
\Re(b) \ge (1-a)\Big(\frac{L}{2}-\beta_*\Big), \quad a\ge1.
\end{equation}
\end{thm}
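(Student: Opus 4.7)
The plan is to combine the necessary condition from Proposition \ref{gen-symbol} with the polynomial structure of $\varphi$, and then read off the constraints on $b$ from Lemma \ref{affine-sel-map}. All the real work has already been done in the asymptotic analysis; what remains is essentially polynomial bookkeeping.

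First, by Proposition \ref{gen-symbol}, boundedness of $C_\varphi$ implies that the limit $A:=\lim_{\Re(z)\to\infty}\varphi(z)/z$ exists as a real number, with either $A\geq 1$, or $A=0$ (the latter being admissible only when $\lambda_1=0$). The existence of this real finite limit is already strong enough to rule out any polynomial of degree $\geq 2$: writing $\varphi(z)=c_d z^d+\cdots+c_0$ with $c_d\neq 0$ and $d\geq 2$, the specialization $z=t$ with $t\in\R^+$ and $t\to\infty$ gives $\varphi(t)/t=c_d t^{d-1}+c_{d-1}t^{d-2}+\cdots+c_0/t$, whose modulus tends to $\infty$, contradicting the finiteness of $A$. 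Hence $\deg\varphi\leq 1$.

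Next, I would split into two cases. If $\deg\varphi=1$, write $\varphi(z)=az+b$ with $a\neq 0$; then $\varphi(z)/z\to a$ forces $a=A\in\R$, and since $a\neq 0$ the dichotomy of Proposition \ref{gen-symbol} leaves only $a\geq 1$, irrespective of whether $\lambda_1=0$ or $\lambda_1>0$. If $\deg\varphi=0$, then $\varphi\equiv b$ is constant and $\varphi(z)/z\to 0=A$; by Proposition \ref{gen-symbol} this is admissible only when $\lambda_1=0$. Together these establish parts (1) and (2), and the identity \eqref{const-a} is just \eqref{constA} rewritten with $A=a$.

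It remains to deduce the conditions \eqref{a,b-1} and \eqref{a,b-2} on $b$. Since $\varphi$ is assumed to be a self-map of $\C_{L/2-\beta_*}$, these follow immediately from Lemma \ref{affine-sel-map} applied with $\theta=L/2-\beta_*$: case (ii) of the lemma ($a=0$, $\Re(b)>\theta$) is exactly \eqref{a,b-1}, while case (i) ($a\in\R^+$, $\Re(b)\geq(1-a)\theta$), specialized to $a\geq 1$, is exactly \eqref{a,b-2}. The main conceptual obstacle — extracting a finite asymptotic slope from boundedness of $C_\varphi$ — has already been absorbed into Proposition \ref{gen-symbol}, so the present proof is really a short degree-versus-limit case analysis.
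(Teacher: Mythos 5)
Your proposal is correct and follows essentially the same route as the paper: invoke Proposition \ref{gen-symbol} to get the finite real limit $A=\lim_{\Re(z)\to\infty}\varphi(z)/z$, deduce $\deg\varphi\le 1$, read off the dichotomy on $a$ from the same proposition, and obtain the conditions on $b$ from Lemma \ref{affine-sel-map} with $\theta=\frac{L}{2}-\beta_*$. The only difference is that you spell out the degree argument (specializing to real $z=t\to\infty$), which the paper leaves implicit.
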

	
\begin{proof}
By Proposition \ref{gen-symbol}, we have $\displaystyle \lim_{\Re(z)\to\infty} \frac{\varphi(z)}{z}$ is a (real) constant, which implies that $\deg(\varphi)\le 1$. So $\varphi(z) = az + b$, where $a$ and $b$ are complex numbers. Then again by Proposition \ref{gen-symbol}, we get $a\in\mathbb{R}$ and moreover, if $\lambda_1 > 0$, then $a \geq 1$, while if $\lambda_1 = 0$, then either $a = 0$ or $a\geq 1$, and also $a$ satisfies condition \eqref{const-a}. Furthermore, \eqref{a,b-1} and \eqref{a,b-2} follow from Lemma \ref{affine-sel-map} with $\theta=\dfrac{L}{2}-\beta_*$.
\end{proof}

From now on, an affine symbol $\varphi$ stated in Theorem \ref{affine-symbol} is supposed to be given.

\subsection{Two trivial cases of the affine symbols ($a=0,1$)}

In the following, we denote by $\|\cdot\|_{\rm op}$ the operator norm.

\begin{prop}\label{bdd-a0}
Let $\beta_*\neq \pm\infty$ and $b\in\C_{\frac{L}{2}-\beta_*}$. Then $C_b$ is a bounded composition operator on $\Hblam$ if and only if $\lambda_1=0$. Moreover, in that case, we have $\|C_b\|_{\rm op} = \beta_1\|k_b\|$.
\end{prop}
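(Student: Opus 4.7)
The plan is to exploit the fact that $C_b(f) = f(b)$ is a \emph{constant} function for every $f$, and then determine which constants belong to $\Hblam$. My first step would be to establish the dichotomy: a nonzero constant $c$ lies in $\Hblam$ if and only if $\lambda_1 = 0$. For one direction, if $\lambda_1 > 0$, then every nonzero $f\in\Hblam$ satisfies $\lim_{\Re(z) \to \infty} f(z) = 0$ (apply Lemma \ref{e-branch} to $f$, whose first nonzero coefficient carries $\lambda_{m} \geq \lambda_1 > 0$), precluding nonzero constants. Conversely, if $\lambda_1 = 0$ then $\beta_1 q_1 \equiv 1$ is an element of $\Hblam$ by \eqref{bon}, and the uniqueness of the Dirichlet coefficient representation forces the constant $c$ to have expansion $c = c\beta_1 q_1$, giving $\|c\|_{\Hblam} = |c|\beta_1$.

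Next, for the \emph{only if} direction of the statement, I would assume $C_b$ is bounded and apply $C_b$ to the reproducing kernel, obtaining the constant function $C_b(k_b) \equiv k_b(b) = \|k_b\|^2$, which is visibly nonzero (indeed, by \eqref{ker-H}, $\|k_b\|^2 = \sum_n \beta_n^{-2} e^{-2\lambda_n\Re(b)} > 0$). The dichotomy above then forces $\lambda_1 = 0$.

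For the \emph{if} direction and the norm equality, assuming $\lambda_1 = 0$, I would combine the identity $\|C_b f\|_{\Hblam} = |f(b)|\,\beta_1$ from the first step with the reproducing property $f(b) = \langle f, k_b\rangle$ and Cauchy--Schwarz to conclude
\[
\|C_b f\|_{\Hblam} = \beta_1|\langle f, k_b\rangle| \leq \beta_1 \|k_b\|\,\|f\|_{\Hblam},
\]
so $\|C_b\|_{\rm op} \leq \beta_1 \|k_b\|$. Testing on $f = k_b$ then yields $\|C_b k_b\|_{\Hblam} = \beta_1 k_b(b) = \beta_1 \|k_b\|^2$, so dividing by $\|k_b\|$ saturates the bound.

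I do not anticipate a genuine obstacle: the argument reduces entirely to the coefficient-uniqueness observation and the reproducing kernel formalism already set up in the paper. The only point demanding a line of care is invoking uniqueness of Dirichlet coefficients to exclude nonzero constants from $\Hblam$ when $\lambda_1 > 0$.
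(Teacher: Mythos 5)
Your proof is correct. The ``if'' direction and the norm identity coincide in substance with the paper's argument, which packages the same computation by observing that $C_b=\beta_1 q_1\otimes k_b$ is a rank-one operator with $\|C_b\|_{\rm op}=\beta_1\|q_1\|\,\|k_b\|=\beta_1\|k_b\|$; your Cauchy--Schwarz estimate plus saturation at $f=k_b$ is just the explicit form of that. Where you genuinely diverge is the ``only if'' direction: the paper simply invokes Theorem \ref{affine-symbol}, whose proof runs through the general necessary-condition machinery of Proposition \ref{gen-symbol} (Lemma \ref{branch} on branches of the logarithm together with Lemma \ref{e-branch}), whereas you give a direct, self-contained argument for the constant-symbol case --- showing via Lemma \ref{e-branch} that when $\lambda_1>0$ every nonzero element of $\Hblam$ tends to $0$ as $\Re(z)\to\infty$, so $\Hblam$ contains no nonzero constants, and then feeding $k_b$ (whose image $C_b k_b\equiv\|k_b\|^2>0$ is a nonzero constant) into $C_b$. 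Your route is more elementary and avoids the logarithm-branch lemma entirely for this special case; the paper's route costs nothing extra since Theorem \ref{affine-symbol} is needed anyway for non-constant polynomial symbols. The one step that deserves the care you flagged --- applying Lemma \ref{e-branch} to elements of $\Hblam$ --- is legitimate, since the paper notes in Section 3.1 that $\beta_*\neq\pm\infty$ guarantees $D_f\neq\infty$ for all $f\in\Hblam$.
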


\begin{proof}
Suppose that $C_b$ is a bounded on $\Hblam$. By Theorem \ref{affine-symbol}, $\lambda_1$ must necessarily be zero.

Conversely, suppose that $\lambda_1=0$. It implies that the function $\beta_1q_1(z) = e^{-\lambda_1z}=1$ belongs to $\Hblam$. (Recall that the functions $q_k$ are given by \eqref{bon}.) Now, note that for all $f\in\Hblam$, we have
\[
(C_b f)(z)=f(b)=\langle f,k_b\rangle \beta_1q_1(z),
\]
Hence $C_b = \beta_1q_1 \otimes k_b$ is  a rank one operator. In particular, it is bounded and $\|C_b\|_{\text{op}} = \beta_1\|q_1\| \|k_b\|$. But $\|q_1\|=1$, which implies that $\|C_b\|_{\text{op}} = \beta_1\|k_b\|$.
\end{proof}

\begin{prop}\label{bdd-a1}
Let $\beta_*\neq \pm\infty$. Then $C_{z+b}$ is a bounded composition operator on $\Hblam$ if and only if $\Re(b)\ge0$. Moreover, in that case, we have $\|C_{z+b}\|_{\rm op} = e^{-\lam_1 \Re(b)}$.
\end{prop}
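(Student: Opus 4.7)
The plan is to exploit the fact that the translation symbol $z \mapsto z+b$ diagonalizes the composition operator in the orthonormal basis $(q_n)_{n\geq 1}$ of Proposition~\ref{prop:ONB}. Indeed, for any $f(z) = \sum_{n\geq 1} a_n e^{-\lambda_n z} \in \Hblam$, the termwise translation yields
\[
C_{z+b}f(z) = f(z+b) = \sum_{n\geq 1} \bigl(a_n e^{-\lambda_n b}\bigr) e^{-\lambda_n z},
\]
so that $C_{z+b}$ acts on coefficients as multiplication by the sequence $(e^{-\lambda_n b})_{n\geq 1}$. Equivalently, $C_{z+b} q_n = e^{-\lambda_n b} q_n$ for every $n$, so $(q_n)$ is an orthonormal eigenbasis and $C_{z+b}$ is a diagonal operator.

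For the necessity of $\Re(b)\geq 0$, I would simply invoke Theorem~\ref{affine-symbol}: since $\varphi(z)=z+b$ is the affine symbol with $a=1$, the self-map condition \eqref{a,b-2} specialises to $\Re(b)\geq (1-1)(\tfrac{L}{2}-\beta_*)=0$. For sufficiency, assume $\Re(b)\geq 0$. Then, using Parseval's identity in the basis $(q_n)$,
\[
\|C_{z+b}f\|^2 = \sum_{n\geq 1}|a_n|^2 e^{-2\lambda_n \Re(b)}\beta_n^2.
\]
Because $(\lambda_n)$ is non-decreasing and $\Re(b)\geq 0$, the factor $e^{-2\lambda_n\Re(b)}$ is maximal at $n=1$, and the crude bound $e^{-2\lambda_n \Re(b)}\leq e^{-2\lambda_1 \Re(b)}$ gives
\[
\|C_{z+b}f\|^2 \leq e^{-2\lambda_1\Re(b)}\|f\|^2,
\]
proving boundedness together with $\|C_{z+b}\|_{\mathrm{op}}\leq e^{-\lambda_1 \Re(b)}$.

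For the matching lower bound, I would test against the first basis vector $q_1$: since $\|q_1\|=1$ and $C_{z+b}q_1 = e^{-\lambda_1 b}q_1$, one immediately gets $\|C_{z+b}q_1\| = e^{-\lambda_1 \Re(b)}$, whence $\|C_{z+b}\|_{\mathrm{op}}\geq e^{-\lambda_1 \Re(b)}$, yielding the claimed equality. There is no real obstacle in this argument; the only subtlety is to confirm that the necessity part genuinely reduces to the self-map condition of Lemma~\ref{affine-sel-map} (via Theorem~\ref{affine-symbol}) rather than requiring an independent functional-analytic argument, but the previously established machinery handles this directly.
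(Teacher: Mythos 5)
Your proposal is correct and follows essentially the same route as the paper: necessity via the self-map condition \eqref{a,b-2} specialised to $a=1$, and sufficiency by observing that $C_{z+b}$ is diagonal in the basis $(q_n)$ with eigenvalues $(e^{-\lambda_n b})_{n\ge 1}$ decreasing in modulus, so the operator norm is the first eigenvalue's modulus $e^{-\lambda_1\Re(b)}$. You merely spell out the Parseval estimate and the test against $q_1$ that the paper leaves as ``well-known and easy to see.''
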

	
\begin{proof}
Suppose $C_{z+b}$ is bounded. Then by \eqref{a,b-2}, we get immediately that $\Re(b)\geq 0$. Conversely, suppose that $\Re(b)\geq 0$. Note that for every $k\in\mathbb N$, $C_{z+b}q_k=e^{-\lambda_k b}q_k$. Thus $C_{z+b}$ is a diagonal operator with a sequence of eigenvalues equal to $(e^{-\lambda_k b})_{k\geq 1}$. Using now that $\Re(b)\geq 0$, we see that this sequence is decreasing in modulus, and so it is well-known (and easy to see) that $C_{z+b}$ is bounded and $\|C_{z+b}\|_{\rm op}=e^{-\lam_1 \Re(b)}$.
\end{proof}

\subsection{Characterization for boundedness of $C_{az+b}\ (a\ne0,1)$}

Throughout this subsection, we always assume that an affine symbol $\varphi(z)=az+b$, with $a>1$ and $b\in\mathbb{C}$, is given. 

We need some supplementary notation.

\begin{defn}\label{ratiosetdef}
For a given sequence of real frequencies $\Lam = (\lam_n)_{n\geq 1}$, define the set
\[
\mathcal{R}(\Lam) = \left\{r\in[1,\infty): \forall n\in\N,\ \exists m=m_n \geq n,\ r\lam_n = \lam_m \right\}.
\]
\end{defn}
This set, by Theorem \ref{affine-symbol}, is precisely the set of all possible values of $a$ for which a non-trivial symbol $\varphi(z)=az+b$ induces a bounded composition operator $C_\varphi$ on the space $\mathcal{H}(\beta,\Lambda)$.

\begin{rem}\label{ratiosetrem}
Notice that we always have $1\in \mathcal{R}(\Lam)$. Furthermore, since $(\lam_n)_{n\geq 1}$ is a strictly increasing sequence, for a given $r\in \mathcal{R}(\Lam)$, to each $n$ there corresponds a unique $m_n\geq n$, such that $r\lam_n = \lam_{m_n}$.
\end{rem}
	
We put $\mathcal{R}_1(\Lam) = \mathcal{R}(\Lam)\setminus\{1\}$. Depending on the given sequence $(\lambda_n)_{n\geq 1}$, it may happen that $\mathcal{R}(\Lambda)=\{1\}$, i.e. $\mathcal{R}_1(\Lam) = \emptyset$, as well as $\mathcal{R}_1(\Lam) \ne \emptyset$. The following examples are taken from \cite{HK}.

\begin{exa}\label{a=1}
Let $\lam_n=n!$. Then $\mathcal{R}(\Lambda)$ is the singleton $\{1\}$.
\end{exa}

\begin{exa}\label{many-a}
(1) For $\lam_n=\log n$ (the classical Dirichlet series) or $\lam_n=n$, every $\ell\in\mathbb{N}$ belongs to $\mathcal{R}(\Lam)$.

(2) Consider a geometric sequence $(\lam_n)$ with the ratio $q>1$, given by $ \lam_1>0$ and $\lam_{n}=\lam_1q^{n-1}$, $n\geq1$. In this case any value $q^\ell \ (\ell\in\mathbb{N})$ belongs to $\mathcal{R}(\Lam)$.
\end{exa}

Now let $a\in \mathcal R_1(\Lam)$ be given. In principle, for each $n\in\N$, the index $m_n \ge n$ for which $\lam_{m_n} = a\lam_n$, depends on $a$, i.e. $m_n=m_n^{(a)}$ (note that by Propositions \ref{gen-symbol}, if $\lambda_1 = 0$, then $m_1 = 1$). To simplify the expositions, in what follows, we skip the superscript $(a)$ whenever there is no confusion in context.

For $n\in\N$ and $x\in\R$, we also define the quantity
\[
r_n(a,x) = r_n(\Lam,\beta,a,x) := e^{-\lam_n x}\frac{\beta_{m_n}}{\beta_n}.
\]

Now we are able to state and prove the following boundedness criterion for the case $a>1$.

\begin{prop}\label{cri-bdd-a>1}
Let $\beta_*\neq \pm\infty$, $a>1$ and $b\in\C$. Then $C_{az+b}$ is a bounded composition operator on $\Hblam$ if and only if the following conditions are satisfied
		\begin{enumerate}
			\item $a\in \mathcal R_1(\Lambda)$,
			\item $\Re(b) \geq (1-a)\Big(\dfrac{L}{2}- \beta_*\Big)$,
			\item the sequence $\Big(r_n(a,\Re(b))\Big)_{n\ge1}$ is bounded.
		\end{enumerate}
Moreover, in this case, 		
\[
\|C_{az+b}\|_{{\rm op}} = \sup_{n\in\N} r_n(a,\Re(b)).
\]
\end{prop}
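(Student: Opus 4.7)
My plan is to reduce the proof to a single key computation on the orthonormal basis $(q_n)_{n\geq 1}$ of $\Hblam$ provided by Proposition \ref{prop:ONB}. Assuming condition (1), we have $a\lam_n=\lam_{m_n}$ for every $n$, and so
\[
C_{az+b}q_n(z)=\frac{1}{\beta_n}e^{-\lam_n(az+b)}=\frac{1}{\beta_n}e^{-\lam_n b}e^{-\lam_{m_n} z}=\frac{\beta_{m_n}}{\beta_n}e^{-\lam_n b}q_{m_n}(z).
\]
Hence $\|C_{az+b}q_n\|=r_n(a,\Re(b))$. Moreover the remark following Proposition \ref{gen-symbol} shows that $n\mapsto m_n$ is strictly increasing, so $(q_{m_n})_{n\geq 1}$ is an orthonormal family in $\Hblam$. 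These two facts drive the whole argument.

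\textbf{Necessity.} Assume $C_{az+b}$ is bounded. Conditions (1) and (2) follow directly from Theorem \ref{affine-symbol}: the identity $\lam_{m_n}=a\lam_n$ together with $a>1$ gives $a\in\calR_1(\Lam)$, and \eqref{a,b-2} is precisely (2). For (3), the key computation yields $r_n(a,\Re(b))=\|C_{az+b}q_n\|\leq \|C_{az+b}\|_{\rm op}$ for every $n$, whence $\sup_n r_n(a,\Re(b))\leq \|C_{az+b}\|_{\rm op}<\infty$.

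\textbf{Sufficiency and norm formula.} Assume (1)--(3) and set $M=\sup_n r_n(a,\Re(b))$. For a Dirichlet polynomial $P=\sum_{n=1}^N a_n\beta_n q_n$, using the orthonormality of $(q_{m_n})_{n\geq 1}$ just noted,
\[
\|C_{az+b}P\|^2=\Big\|\sum_{n=1}^N a_n\beta_{m_n}e^{-\lam_n b}q_{m_n}\Big\|^2=\sum_{n=1}^N |a_n|^2\beta_n^2\, r_n(a,\Re(b))^2\leq M^2\|P\|^2.
\]
Since Dirichlet polynomials are dense in $\Hblam$, the restriction of $C_{az+b}$ to that subspace extends uniquely to a bounded operator $T$ on $\Hblam$ with $\|T\|_{\rm op}\leq M$.

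The main subtle point is to identify $T$ with the full composition operator $C_{az+b}$ on arbitrary $f\in\Hblam$. For this I use the continuity of point evaluations (Proposition \ref{H-repker}). Condition (2) combined with Lemma \ref{affine-sel-map} ensures $\varphi(z)=az+b$ maps $\C_{\frac{L}{2}-\beta_*}$ into itself, so both $f\circ\varphi$ and $Tf$ are well-defined holomorphic functions there. Choosing Dirichlet polynomials $P_k\to f$ in norm, evaluation continuity gives $TP_k(z)\to Tf(z)$ at each $z\in\C_{\frac{L}{2}-\beta_*}$, while $P_k(\varphi(z))\to f(\varphi(z))$ by evaluation at $\varphi(z)$; since $TP_k=P_k\circ\varphi$ for polynomials, we conclude $Tf=f\circ\varphi=C_{az+b}f$. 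Therefore $C_{az+b}$ is bounded on $\Hblam$ with $\|C_{az+b}\|_{\rm op}\leq M$, and combined with the reverse inequality from necessity this gives $\|C_{az+b}\|_{\rm op}=\sup_n r_n(a,\Re(b))$.
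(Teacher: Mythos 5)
Your proof is correct and follows essentially the same route as the paper's: necessity via Theorem \ref{affine-symbol} and the probe functions $q_n$, sufficiency via the identity $C_{az+b}q_n=\frac{\beta_{m_n}}{\beta_n}e^{-\lambda_n b}q_{m_n}$ together with the orthonormality of $(q_{m_n})_{n\ge1}$. The only difference is that you make explicit, via density of Dirichlet polynomials and continuity of point evaluations, the identification of the bounded extension with the genuine composition operator — a step the paper's proof carries out implicitly by substituting term by term into the series for an arbitrary $f\in\Hblam$.
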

	
\begin{proof}
\textit{Necessity}. Suppose $C_{az+b}$ is a bounded operator on $\Hblam$. Then conditions (1) and (2) follow from Theorem \ref{affine-symbol}. It remains to show (3).
		
There is some constant $M>0$ such that $\|C_{az+b}f\| \leq M \|f\|$ for all $f\in\Hblam$. In particular, for probe functions $q_n(z) = \frac{1}{\beta_n} e^{-\lambda_n z}$, by (1) we have $C_{az+b}q_n=\frac{\beta_{m_n}}{\beta_n}e^{-\lambda_n b}q_{m_n}$, and thus
\[
\|C_{az+b} q_n\| = e^{-\lambda_n \Re(b)}\cdot \frac{\beta_{m_n}}{\beta_n}\leq M \|q_n\| = M,\quad\text{for all $n\in\N$},
\]
which gives (3).

\noindent \textit{Sufficiency}. Conversely, suppose all three conditions (1) -- (3) are satisfied. Note that (2) guarantees, by Lemma \ref{affine-sel-map}, that $z\mapsto az+b$ is a self-map of $\displaystyle \mathbb{C}_{\frac{L}{2}-\beta_*}$. Also, (3) shows that there exists $M > 0$ such that $0< r_n(a,\Re(b))\leq M$, for all $n$. Let $f(z) = \dsum_{k=1}^\infty a_n e^{-\lambda_n z}\in\Hblam$. Hence, by (1),
\begin{eqnarray*}
\|C_{az+b} f\|^2%
&=& \sum_{n=1}^\infty |a_ne^{-\lambda_n b}|^2 \beta_{m_n}^2 = \sum_{n=1}^\infty |a_n|^2 \beta_n^2 r_n(a,\Re(b))^2\\
&\le& M^2 \sum_{n=1}^\infty |a_n|^2 \beta_n^2 = M^2 \|f\|^2,
\end{eqnarray*}
which shows that $C_{az+b}$ is bounded on $\Hblam$.

Moreover, from proofs of both necessity and sufficiency it follows that $\displaystyle \|C_{az+b}\|_{{\rm op}} = \sup_{n\in\N} r_n(a,\Re(b))$.
\end{proof}
	
Combining Propositions \ref{bdd-a0}, \ref{bdd-a1} and \ref{cri-bdd-a>1}, we obtain a characterization of boundedness for $C_{az+b}$ on $\Hblam$.

\begin{thm}\label{cri-bdd}
Let $\beta_*\neq \pm\infty$ and $\varphi(z)=az+b$ ($a,b\in\mathbb{C}$). Consider the following statements.
\begin{enumerate}
    \item $\varphi(z) = b$ for some $b\in\C$ with $\Re(b) > \dfrac{L}{2} - \beta_*$.
    \item $\varphi(z) = az+b$, where
    \[
    \begin{cases}
		a\in \mathcal R(\Lambda),\\
		\Re(b) \geq (1-a)\Big(\dfrac{L}{2}- \beta_*\Big),\\
		\text{the sequence $\Big(r_n(a,\Re(b))\Big)_{n\ge1}$ is bounded}.
		\end{cases}
    \]
\end{enumerate}
The following are true for a composition operator $C_\varphi$ acting on the space $\Hblam$.
\begin{enumerate}
    \item[(i)] If $\lambda_1 = 0$, then $C_\varphi$ is bounded if and only if either (1) or (2) holds.
    \item[(ii)] If $\lambda_1 > 0$, then $C_\varphi$ is bounded if and only if (2) holds.
\end{enumerate}
\end{thm}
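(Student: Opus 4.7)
The plan is to organize the proof as a case analysis on the value of $a$ in $\varphi(z)=az+b$, since by Theorem \ref{affine-symbol} only three possible shapes of affine symbol can induce a bounded composition operator: the constant case $a=0$, the pure translation $a=1$, and the strict dilation $a>1$. For each case a complete characterization has already been established by one of Propositions \ref{bdd-a0}, \ref{bdd-a1}, or \ref{cri-bdd-a>1}, so the entire task reduces to verifying that the compact statements (1) and (2) exactly repackage those three earlier criteria, and that the dichotomy on $\lambda_1$ correctly selects which of them can be realized.

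For $a=0$ the symbol is constant, and Proposition \ref{bdd-a0} gives that $C_b$ is bounded if and only if $\lambda_1=0$ and $\Re(b) > L/2 - \beta_*$. Since $\mathcal{R}(\Lambda) \subset [1,\infty)$, no value $a=0$ appears in (2), so this situation is precisely condition (1), which in turn is only admissible when $\lambda_1=0$. For $a=1$, Remark \ref{ratiosetrem} gives $1 \in \mathcal{R}(\Lambda)$ with $m_n = n$ for all $n$, hence $r_n(1, \Re(b)) = e^{-\lambda_n \Re(b)}$; boundedness of this sequence is equivalent to $\Re(b) \geq 0$, which coincides with $(1-1)(L/2-\beta_*) = 0 \leq \Re(b)$. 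Thus the $a=1$ instance of (2) collapses to exactly the criterion of Proposition \ref{bdd-a1}. The case $a>1$ is then covered verbatim by Proposition \ref{cri-bdd-a>1}, so the union of the cases $a=1$ and $a>1$ is precisely condition (2) over $a\in \mathcal{R}(\Lambda)$.

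Assembling these pieces: for the \emph{if} direction, under $\lambda_1 = 0$ both (1) and (2) produce bounded operators through the respective propositions, while under $\lambda_1 > 0$ the constant case is ruled out by Proposition \ref{bdd-a0}, leaving only (2). For the \emph{only if} direction, Theorem \ref{affine-symbol} forces a bounded $C_\varphi$ with polynomial symbol to be affine $az+b$ with $a\in \{0\}\cup[1,\infty)$, the constant case permitted only when $\lambda_1 = 0$, and with the ratio relation $\lambda_{m_k} = a\lambda_k$ holding for every $k$ — which is exactly the membership $a\in \mathcal{R}(\Lambda)$ in Definition \ref{ratiosetdef}. The remaining constraints on $\Re(b)$ and on $(r_n(a,\Re(b)))_n$ are then read directly off the three propositions. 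I do not anticipate a genuine obstacle here; the proof is a careful bookkeeping of previously established facts, the only mildly delicate point being the recognition that the $a=1$ subcase of (2) reduces, via $m_n = n$, to the simple half-plane condition $\Re(b)\geq 0$ from Proposition \ref{bdd-a1}.
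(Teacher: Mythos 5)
Your proposal is correct and follows exactly the paper's route: the paper proves Theorem \ref{cri-bdd} simply by combining Propositions \ref{bdd-a0}, \ref{bdd-a1} and \ref{cri-bdd-a>1} (with Theorem \ref{affine-symbol} supplying the ``only if'' restriction to $a\in\{0\}\cup[1,\infty)$ and the relation $\lambda_{m_k}=a\lambda_k$). Your additional check that the $a=1$ instance of condition (2) collapses via $m_n=n$ and $r_n(1,\Re(b))=e^{-\lambda_n\Re(b)}$ to the criterion $\Re(b)\ge 0$ of Proposition \ref{bdd-a1} is exactly the bookkeeping the paper leaves implicit.
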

	
\section{\bf Essential norm, compactness, Schatten class and compact differences}

\subsection{Essential norm and compactness}

Compactness of a bounded composition operator on a Hilbert space $\Hblam$ can be investigated in different ways. The first makes use of a compactness criterion which states that a bounded linear operator $T$ on $\Hblam$ is compact if and only if for any sequence $(f_n)$ from $\Hblam$ which is weakly convergent to $0$, the sequence $(T f_n)$ converges strongly to $0$ in $\Hblam$. The other way is via the essential norm of $T$ defined by
\begin{align*}
\|T\|_e = \inf\{\|T-K\|_{\text{op}}: K \ \text{is a compact operator on $\Hblam$}\}.
\end{align*}
Clearly, $T$ is compact if and only if $\|T\|_e=0$.

For a bounded composition operator $C_{az+b}$, in case $a=0$, by Proposition \ref{bdd-a0}, we have the following simple result.

\begin{prop}\label{cpt-a0}
Let $\beta_*\neq \pm\infty$. For any $b\in\mathbb{C}_{\frac{L}{2}-\beta_*}$, a composition operator $C_b$ is always compact on the space $\Hblam$.
\end{prop}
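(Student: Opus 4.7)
The plan is to leverage Proposition \ref{bdd-a0} almost directly: once $C_b$ is a bounded composition operator on $\Hblam$, it is forced to be of finite rank, and finite-rank operators on a Hilbert space are always compact.

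More concretely, I would first recall that in order for $C_b$ to even act as a bounded operator on $\Hblam$, Proposition \ref{bdd-a0} tells us we must have $\lambda_1 = 0$. Under that assumption, the same proposition produces the explicit factorisation
\[
C_b = \beta_1\, q_1 \otimes k_b,
\]
where $q_1$ is the constant function $\beta_1^{-1}$ and $k_b$ is the reproducing kernel at the point $b \in \C_{\frac{L}{2}-\beta_*}$ (well-defined by Proposition \ref{H-repker}). Thus $C_b$ is a rank-one operator on $\Hblam$.

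Since every finite-rank operator between Hilbert spaces is compact (the image of the unit ball is a bounded subset of a finite-dimensional subspace, hence relatively compact), it follows immediately that $C_b$ is compact. This completes the argument.

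There is essentially no obstacle in this proof: the heavy lifting was already done when the rank-one representation of $C_b$ was established in Proposition \ref{bdd-a0}. The only subtlety worth flagging in the write-up is that the statement implicitly assumes we are in the regime where $C_b$ actually defines a bounded operator (equivalently, $\lambda_1=0$), since otherwise $C_b$ is not an operator on $\Hblam$ in the first place.
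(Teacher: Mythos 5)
Your proposal is correct and follows exactly the paper's own argument: Proposition \ref{bdd-a0} gives the rank-one factorisation $C_b=\beta_1 q_1\otimes k_b$ (under $\lambda_1=0$), and compactness is immediate since finite-rank operators are compact. Your additional remark that boundedness implicitly forces $\lambda_1=0$ is a reasonable clarification but does not change the argument.
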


\begin{proof}
As noted in Proposition \ref{bdd-a0}, $C_b$ has a rank one and hence it is compact.
\end{proof}

\begin{prop}\label{cpt-a1}
Let $\beta_*\neq \pm\infty$ and let $C_{z+b}$ be a bounded composition operator on $\Hblam$. Then
\[
\|C_{z+b}\|_e=\begin{cases}
1 &\mbox{if }\Re(b)=0\\
0 &\mbox{if }\Re(b)>0.\\
\end{cases}
\]
In particular, $C_{z+b}$ is a compact operator on  $\Hblam$ if and only if $\Re(b)>0$.
\end{prop}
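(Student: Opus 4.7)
The plan is to exploit the diagonal structure observed in the proof of Proposition \ref{bdd-a1}: since $C_{z+b} q_n = e^{-\lambda_n b} q_n$, the operator $C_{z+b}$ is diagonal in the orthonormal basis $(q_n)_{n\geq 1}$ of $\mathcal H(\beta,\Lambda)$ with eigenvalue sequence $(e^{-\lambda_n b})_{n\geq 1}$. The essential norm of a diagonal operator on a Hilbert space is well understood, and the two cases $\Re(b)>0$ and $\Re(b)=0$ correspond exactly to whether the diagonal sequence tends to zero or stays on the unit circle.

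First I would treat the case $\Re(b)>0$. Since $\lambda_n\to\infty$, the eigenvalues satisfy $|e^{-\lambda_n b}|=e^{-\lambda_n \Re(b)}\to 0$. Defining the finite rank truncations
\[
K_N f = \sum_{n=1}^N e^{-\lambda_n b}\langle f,q_n\rangle\, q_n,
\]
a direct orthogonality computation gives
\[
\|(C_{z+b}-K_N)f\|^2 = \sum_{n>N} e^{-2\lambda_n\Re(b)}|\langle f,q_n\rangle|^2 \leq e^{-2\lambda_{N+1}\Re(b)}\|f\|^2,
\]
so $\|C_{z+b}-K_N\|_{\rm op}\to 0$. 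This forces $C_{z+b}$ to be compact, i.e. $\|C_{z+b}\|_e=0$.

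Next, for $\Re(b)=0$, the upper bound $\|C_{z+b}\|_e\leq\|C_{z+b}\|_{\rm op}=1$ is automatic from Proposition \ref{bdd-a1}. For the lower bound I would use the standard test sequence argument: the $q_n$'s form an orthonormal basis, hence $q_n\to 0$ weakly in $\mathcal H(\beta,\Lambda)$, so $Kq_n\to 0$ in norm for every compact $K$. Then
\[
\|C_{z+b}-K\|_{\rm op}\geq \|(C_{z+b}-K)q_n\| \geq |e^{-\lambda_n b}|\,\|q_n\|-\|Kq_n\| = 1 - \|Kq_n\|,
\]
and letting $n\to\infty$ yields $\|C_{z+b}-K\|_{\rm op}\geq 1$. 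Taking infimum over compact $K$ gives $\|C_{z+b}\|_e\geq 1$, which combined with the reverse inequality closes the argument.

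I do not expect any serious obstacle here: the diagonality of $C_{z+b}$ together with the basis property of $(q_n)$ reduces everything to classical Hilbert space facts. The only subtlety worth flagging is to justify the weak convergence $q_n\rightharpoonup 0$, which is immediate since $(q_n)$ is an orthonormal system, and to note that this argument implicitly covers both subcases $\lambda_1=0$ and $\lambda_1>0$ since we use only $\lambda_n\to\infty$.
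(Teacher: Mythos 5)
Your proof is correct and follows essentially the same route as the paper: both rest on the observation that $C_{z+b}$ is diagonal with respect to $(q_n)$ with eigenvalues $e^{-\lambda_n b}$, the paper then simply citing the classical fact (Halmos, Problem 171) that the essential norm of such an operator is $\lim_{n\to\infty}e^{-\lambda_n\Re(b)}$, while you prove that fact directly via finite-rank truncations and the weakly null test sequence $(q_n)$. Both computations check out, so this is just a self-contained expansion of the paper's one-line argument.
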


\begin{proof}
Proposition~\ref{bdd-a1} shows that $\Re(b)\geq 0$. Furthermore, as already noticed, $C_{z+b}$ is a diagonal operator with a sequence of eigenvalues equal to $(e^{–\lambda_k b})_{k\geq 1}$. But then it is well-known (see for instance \cite[Problem 171]{Halmos}) that $\|C_{z+b}\|_e=\lim\limits_{n\to \infty}e^{-\lambda_n\Re(b)}$, which gives the conclusion. 
\end{proof}

Proposition \ref{cpt-a1} can also be obtained from the following results for $a\ge1$.

\begin{thm}\label{ess-norm}
Let $\beta_*\neq \pm\infty$, $a\ge1$ and $C_{az+b}$ be a bounded composition operator on the space $\Hblam$. Then $\displaystyle \|C_{az+b}\|_e = \limsup_{n\to\infty} r_n(a,\Re(b))$. In particular, $C_{az+b}$ is compact if and only if $\displaystyle \lim_{n\to\infty} r_n(a,\Re(b))=0$. 	
\end{thm}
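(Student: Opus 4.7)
The plan is to compute $\|C_{az+b}\|_e$ by sandwiching it between matching lower and upper bounds, and then read off compactness from the equivalence $C_{az+b}$ compact $\Longleftrightarrow \|C_{az+b}\|_e=0$. The starting observation, inherited from the proof of Proposition \ref{cri-bdd-a>1} (and from the diagonal action of $C_{z+b}$ when $a=1$), is that $C_{az+b}$ acts on the orthonormal basis by
\[
C_{az+b}q_n=\frac{\beta_{m_n}}{\beta_n}e^{-\lam_n b}q_{m_n},\qquad\|C_{az+b}q_n\|=r_n(a,\Re(b)),
\]
where $n\longmapsto m_n$ is strictly increasing when $a>1$ (Remark after Proposition~\ref{gen-symbol}) and is the identity when $a=1$; in either case, $(q_{m_n})_{n\geq1}$ is itself an orthonormal family.

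For the lower bound $\|C_{az+b}\|_e\geq \limsup_{n\to\infty}r_n(a,\Re(b))$, I would use that $(q_n)$ tends weakly to $0$ (being orthonormal), so for any compact operator $K$ on $\Hblam$, $\|Kq_n\|\to0$. Since $\|q_n\|=1$, for every $n$
\[
\|C_{az+b}-K\|_{\rm op}\geq \|(C_{az+b}-K)q_n\|\geq \|C_{az+b}q_n\|-\|Kq_n\|=r_n(a,\Re(b))-\|Kq_n\|.
\]
Taking $\limsup_{n\to\infty}$ and then infimum over compact $K$ yields the desired inequality.

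For the upper bound, let $P_N$ denote the orthogonal projection onto $\Span(q_1,\dots,q_N)$, so $C_{az+b}P_N$ has finite rank (hence is compact) and $\|C_{az+b}\|_e\leq \|C_{az+b}(I-P_N)\|_{\rm op}$. For $f=\sum_{n\geq1}a_n e^{-\lam_n z}\in\Hblam$, writing $f$ in the basis $(q_n)$ as $f=\sum_{n\geq1}a_n\beta_n q_n$, we get
\[
C_{az+b}(I-P_N)f=\sum_{n>N}a_n\beta_n e^{-\lam_n b}\frac{\beta_{m_n}}{\beta_n}q_{m_n}.
\]
Because $(q_{m_n})_{n>N}$ is orthonormal (by injectivity of $n\mapsto m_n$), Parseval gives
\[
\|C_{az+b}(I-P_N)f\|^2=\sum_{n>N}|a_n|^2\beta_n^2\,r_n(a,\Re(b))^2\leq \Bigl(\sup_{n>N}r_n(a,\Re(b))\Bigr)^{2}\|f\|^2,
\]
so $\|C_{az+b}\|_e\leq \sup_{n>N}r_n(a,\Re(b))$ for every $N$; letting $N\to\infty$ gives $\|C_{az+b}\|_e\leq \limsup_{n\to\infty}r_n(a,\Re(b))$.

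Combining the two bounds yields the formula. The compactness characterization is then immediate: $C_{az+b}$ compact $\Longleftrightarrow\|C_{az+b}\|_e=0\Longleftrightarrow \limsup_n r_n(a,\Re(b))=0\Longleftrightarrow \lim_n r_n(a,\Re(b))=0$, the last equivalence using $r_n\geq0$. There is really no hard step here; the only delicate point is noting that $(q_{m_n})$ is orthonormal, which is what makes the $(I-P_N)$-estimate sharp and prevents the need for a cross-term analysis.
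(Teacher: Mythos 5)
Your proof is correct and follows essentially the same route as the paper: the lower bound via the weakly null orthonormal sequence $(q_n)$ and the upper bound via the finite-rank truncations (your $C_{az+b}P_N$ is exactly the paper's $C_{az+b}K_N$), with the same Parseval computation. Your explicit remark that $(q_{m_n})_{n}$ is orthonormal because $n\mapsto m_n$ is injective is a point the paper uses implicitly, so nothing is missing.
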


\begin{proof}
We follow the standard technique (see, e.g., \cite{DK16}).

Note that by Theorem \ref{cri-bdd}, the sequence $\Big(r_n(a,\Re(b))\Big)_{n\ge1}$ is bounded.

\noindent{$\bullet$ \bf Upper bound}. We use compact (finite rank) operators on $\Hblam$ defined by
\[
K_N\colon f(z)=\sum_{n=1}^\infty a_ne^{-\lambda_n z} \longmapsto \sum_{n=1}^N a_ne^{-\lambda_n z}\quad (N\in\mathbb{N}).
\]
As the $C_{az+b} K_N$ are also compact, we have
\[
\|C_{az+b}\|_e \leq \inf_{p\ge1} \|C_{az+b} - C_{az+b}K_N\|_{\text{op}}.
\]
Observe that for an arbitrary $\displaystyle f(z) = \sum_{n=1}^\infty a_n e^{-\lam_n z}\in\Hblam$ we have
\begin{eqnarray*}
&& \|(C_{az+b} - C_{az+b}K_N)f\|^2 = \Big\|\sum_{n=N+1}^\infty a_ne^{-\lambda_n (az+b)}\Big\|^2\\
&& = \sum_{n=N+1}^\infty |a_n|^2 e^{-2\lam_n \Re(b)} \beta_{m_n}^2 \leq \sup_{n\geq p+1} \Big(e^{-\lam_n \Re(b)}\frac{\beta_{m_n}}{\beta_n}\Big)^2 \sum_{n=N+1}^\infty |a_n|^2 \beta_n^2\\
&& \le \|f\|^2 \sup_{n\geq p+1} r_n(a,\Re(b)).
\end{eqnarray*}
Thus
\[
 \|C_{az+b}\|_e  \leq \sup_{n\geq N+1} r_n(a,\Re(b)), \quad\text{for every $N\ge1$}.
\]
Letting $N\to\infty$, we get
\[
\|C_{az+b}\|_e  \le \limsup_{n\to\infty} r_n(a,\Re(b)).
\]

\noindent{$\bullet$ \bf Lower bound}. Let $K$ be an arbitrary compact operator on $\Hblam$. Consider the sequence of probe functions $(q_n)_{n\geq 1}$, whose norms are all $1$. Since it converges weakly to $0$ (because it is an orthonormal basis), $\displaystyle \lim_{n\to\infty}\|K q_n\|=0$. Hence
\[
\|C_{az+b}-K\|_{\text{op}} \ge \|(C_{az+b}-K) q_n\| \ge \|C_{az+b} q_n\| - \|K q_n\| \quad\text{($n\ge1$)},
\]
from which it follows that
\begin{eqnarray*}
\|C_{az+b}-K\|_{\text{op}}%
&\ge& \limsup_{n\to\infty} \left(\|C_{az+b}q_n\| - \|K q_n\|\right) = \limsup_{n\to\infty} \|C_{az+b}q_n\| \\
&=& \limsup_{n\to\infty} \left(e^{-\lambda_n \Re(b)}\frac{\beta_{m_n}}{\beta_n}\right)
= \limsup_{n\to\infty} r_n(a,\Re(b)).
\end{eqnarray*}
Taking the infimum over all compact operators $K$ on $\Hblam$, we obtain
\[
\|C_{az+b}-K\|_e \ge \limsup_{n\to\infty} r_n(a,\Re(b)).
\]
\end{proof}

\subsection{Schatten class}

Recall that a bounded linear operator $T$ on $\Hblam$ is called a Hilbert--Schmidt operator if it has finite Hilbert--Schmidt norm $\|T\|_{\rm HS}$, which means that for some orthonormal basis $(e_n)_{n\ge1}$ of $\Hblam$, we have
\[
\|T\|_{\rm HS}:=\Big(\sum_{n\in\mathbb{N}} \|T e_n\|^2\Big)^{1/2} < +\infty.
\]
It is well known that $\|T\|_{\rm HS}$ does not depend on the choice of the orthonormal basis and that if $T$ is Hilbert--Schmidt, then it is compact.
	
Furthermore, for $0<p<\infty$, the Schatten $p$-class consists of all bounded linear operators $T$ on $\Hblam$ for which $(T^{*}T)^{p/4}$ is a Hilbert--Schmidt operator (here $T^*$ is the adjoint operator of $T$). The set of Schatten $p$-class operators forms an ideal in the algebra of all bounded linear operators on $\Hblam$. If $T$ is diagonal with respect to an orthonormal basis $(e_n)_{n\ge1}$, that is, $T e_n = a_n\,e_n$ for all $n\ge1$, then it is well known that $T$ belongs to the Schatten $p$-class if and only if $\displaystyle \sum_{n=1}^\infty |a_n|^p < \infty$. Some developments on operators in Schatten classes can be found in \cite{HKZ}.

We study a Schatten class membership of $C_{az+b}$. For a bounded operator $C_{az+b}$ on $\Hblam$, since $C_{az+b}q_n=\frac{\beta_{m_n}}{\beta_n}e^{-\lambda_n b}q_{m_n}$, therefore $\|C_{az+b} q_n\| = e^{-\lambda_n \Re(b)}\cdot \frac{\beta_{m_n}}{\beta_n} = r_n(a,\Re(b))$, and we get an immediate result about its Hilbert--Schmidt property.

\begin{prop}\label{HS} Let $\beta_*\neq\pm\infty$. 
A bounded composition operator $C_{az+b}$ on $\Hblam$ is a Hilbert--Schmidt operator if and only if
\[
\sum_{n=1}^\infty r_n (a,\Re(b))^2 < \infty.
\]
\end{prop}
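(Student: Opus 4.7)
The proof is essentially a one-line verification using the canonical orthonormal basis $(q_n)_{n\geq 1}$ of $\Hblam$ provided by Proposition~\ref{prop:ONB}. My plan is to invoke the standard coordinate-free characterization of the Hilbert--Schmidt class: a bounded operator $T$ on a Hilbert space is Hilbert--Schmidt if and only if $\sum_{n}\|T e_n\|^2<\infty$ for some (equivalently, every) orthonormal basis $(e_n)$, in which case this sum equals $\|T\|_{\rm HS}^2$. So the entire task reduces to evaluating $\|C_{az+b}q_n\|^2$ for each $n$.

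The substantive computation has already been carried out in the paragraph preceding the statement. Since boundedness of $C_{az+b}$ with $a\geq 1$ forces $a\in\mathcal R(\Lambda)$ (Theorem~\ref{cri-bdd}), so that $a\lambda_n=\lambda_{m_n}$, a direct substitution gives
\[
C_{az+b}q_n(z) = \frac{1}{\beta_n}e^{-\lambda_n b}e^{-\lambda_{m_n} z} = \frac{\beta_{m_n}}{\beta_n}e^{-\lambda_n b}\,q_{m_n}(z),
\]
and since $\|q_{m_n}\|=1$ we obtain $\|C_{az+b}q_n\|^2 = \frac{\beta_{m_n}^2}{\beta_n^2}e^{-2\lambda_n\Re(b)} = r_n(a,\Re(b))^2$.

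Summing over $n$ against the orthonormal basis $(q_n)_{n\geq 1}$ yields
\[
\|C_{az+b}\|_{\rm HS}^2 = \sum_{n=1}^\infty \|C_{az+b}q_n\|^2 = \sum_{n=1}^\infty r_n(a,\Re(b))^2,
\]
and the stated equivalence is immediate. There is no genuine obstacle: the whole content of the proposition is the packaging of the already-computed norm $\|C_{az+b}q_n\|=r_n(a,\Re(b))$ into the Hilbert--Schmidt criterion, which is precisely why the paper describes the result as \emph{immediate}. If one also wishes to cover the degenerate case $a=0$ (where $m_n$ is not defined via $\mathcal R(\Lambda)$), one notes separately that $C_b$ has rank one by Proposition~\ref{bdd-a0} and is trivially Hilbert--Schmidt, consistent with the statement.
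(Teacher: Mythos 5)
Your proposal is correct and matches the paper exactly: the paper states this proposition without a separate proof, relying on the immediately preceding computation $\|C_{az+b}q_n\|=r_n(a,\Re(b))$ together with the basis-independent definition of the Hilbert--Schmidt norm recalled at the start of the subsection, which is precisely your argument. Your extra remark on the rank-one case $a=0$ is a harmless (and consistent) addition.
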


Concerning the Schatten $p$-class membership, for two trivial cases $a=0$ and $a=1$, some results can be obtained easily.

\begin{prop}\label{Sp-a0} Let $0<p<\infty$. A bounded composition operator $C_b$ on $\Hblam$ belongs to the Schatten $p$-class.
\end{prop}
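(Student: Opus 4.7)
The plan is to exploit the fact, already established in Proposition~\ref{bdd-a0}, that a bounded $C_b$ on $\Hblam$ forces $\lambda_1=0$ and in that case $C_b$ is literally a rank one operator, namely $C_b = \beta_1 q_1 \otimes k_b$. Since Schatten $p$-class membership is characterized by summability of the singular values, and a rank one (or any finite rank) operator has only finitely many non-zero singular values, the conclusion should be essentially immediate.

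More concretely, the first step is to cite Proposition~\ref{bdd-a0} to write $C_b f = \langle f, k_b\rangle \beta_1 q_1$ for every $f \in \Hblam$. From this formula one computes directly that $C_b^* C_b = \beta_1^2 \|q_1\|^2 \, k_b \otimes k_b = \beta_1^2 (k_b \otimes k_b)$, which is itself rank one and positive, with unique non-zero eigenvalue $\beta_1^2 \|k_b\|^2$. Consequently $(C_b^* C_b)^{p/4}$ is again rank one (with single non-zero eigenvalue $\beta_1^{p/2}\|k_b\|^{p/2}$) and hence a fortiori Hilbert--Schmidt, for every $p \in (0,\infty)$.

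Equivalently, and perhaps more transparently, one can argue via singular values: the singular value sequence of $C_b$ is $(\beta_1 \|k_b\|, 0, 0, \ldots)$, so $\sum_{n\ge 1} s_n(C_b)^p = (\beta_1\|k_b\|)^p < \infty$ for every $p > 0$. There is no real obstacle here; the statement reduces to the standard fact that finite rank operators lie in every Schatten class, and the only thing worth spelling out is which results from the excerpt justify the reduction to rank one, namely Proposition~\ref{bdd-a0} together with the continuity of the evaluation functional $\delta_b$ (so that $k_b \in \Hblam$, which is recorded just before Proposition~\ref{H-repker}).
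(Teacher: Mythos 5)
Your proposal is correct and follows essentially the same route as the paper, which simply observes that $C_b$ is rank one and hence has finitely many non-zero singular values; your explicit computation of $C_b^*C_b=\beta_1^2\,(k_b\otimes k_b)$ and of the single singular value $\beta_1\|k_b\|$ is a more detailed version of the same argument.
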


\begin{proof}
	Since $C_b$ is of rank one, in particular it is of finite rank and so has finitely many non-zero singular values.
\end{proof}

Also for the case $a=1$, as noted in Proposition \ref{bdd-a1}, $C_{z+b}$ is a diagonal operator with a sequence of eigenvalues $(e^{-\lambda_n b})_{n\geq 1}$ corresponding to eigenvectors $(q_n)_{n\ge1}$, and thus we have the following result.

\begin{prop}\label{Sp-a1}  Let $0<p<\infty$ and $\beta_*\neq\pm\infty$.  A bounded composition operator $C_{z+b}$ on $\Hblam$ belongs to the Schatten $p$-class if and only if
\[
\sum_{n=1}^\infty e^{-p\lambda_n \Re(b)} < \infty.
\]
In particular, $C_{z+b}$ belongs to the Schatten $p$-class if $p\Re(b) > L$, and does not belong if $p\Re(b) < L$, where $L$ is defined in \eqref{L}.
\end{prop}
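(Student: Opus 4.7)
The proof is essentially a direct application of the diagonal operator characterization of the Schatten class, already recalled in the paragraph preceding Proposition~\ref{HS}. The plan is to leverage two facts already established: (i) by Proposition~\ref{bdd-a1}, $C_{z+b}$ is diagonal with respect to the orthonormal basis $(q_n)_{n\geq 1}$ from Proposition~\ref{prop:ONB}, with $C_{z+b}q_n = e^{-\lambda_n b}q_n$; and (ii) a diagonal operator belongs to the Schatten $p$-class if and only if the $p$-th power sum of the moduli of its diagonal entries converges.

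First I would record the eigenvalue computation and note that the singular values of $C_{z+b}$ are exactly $|e^{-\lambda_n b}| = e^{-\lambda_n\Re(b)}$ for $n\ge 1$. Then the cited criterion for diagonal operators gives immediately that $C_{z+b}\in S_p$ if and only if
\[
\sum_{n=1}^\infty \bigl(e^{-\lambda_n\Re(b)}\bigr)^p = \sum_{n=1}^\infty e^{-p\lambda_n\Re(b)} < \infty,
\]
which is the first equivalence.

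For the second part, the identification of the threshold is an immediate application of Lemma~\ref{HHKlem} with the parameter $r = p\Re(b)$: if $p\Re(b) > L$, the lemma gives $\sum_n e^{-p\lambda_n\Re(b)} < \infty$, whereas if $p\Re(b) < L$, the lemma gives $\sum_n e^{-p\lambda_n\Re(b)} = \infty$. Note that $\Re(b)\geq 0$ from Proposition~\ref{bdd-a1} so the exponents are nonnegative and Lemma~\ref{HHKlem} applies in the stated form.

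There is no real obstacle here: the statement is essentially a bookkeeping consequence of two results already at hand. The only mildly delicate point is that the boundary case $p\Re(b) = L$ is not decided by Lemma~\ref{HHKlem} alone (the behaviour of the series depends on finer information about the sequence $\Lambda$), which is precisely why the proposition phrases the second part as two one-sided sufficient conditions rather than a sharp equivalence.
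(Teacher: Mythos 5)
Your proposal is correct and follows exactly the paper's own route: the paper justifies this proposition by observing (in the paragraph preceding it) that $C_{z+b}$ is diagonal with eigenvalues $(e^{-\lambda_n b})_{n\geq 1}$ with respect to the basis $(q_n)_{n\geq 1}$, invoking the recalled criterion that a diagonal operator is in the Schatten $p$-class iff the $p$-th power sum of the moduli of its eigenvalues converges, and then deriving the second statement from Lemma~\ref{HHKlem}. Your additional remarks on the sign of $\Re(b)$ and the undecided boundary case $p\Re(b)=L$ are accurate but not needed beyond what the paper records.
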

Note that the second statement of the proposition above follows from Lemma \ref{HHKlem}.\\

To go further and get a similar result for $a>1$, we need to compute the adjoint of $C_{az+b}$, which can be done easily. Indeed, for any $\displaystyle g(z) = \sum_{n=1}^\infty a_n e^{-\lambda_n z} \in \Hblam$, we write $\displaystyle(C_{az+b}^*g)(z) = \sum_{n=1}^\infty d_n e^{-\lambda_n z}$ and have
\begin{equation}\label{d_n}
d_n \beta_n = \langle C_{az+b}^*g, q_{n}\rangle = \langle g, C_{az+b} q_n\rangle
= \Big\langle \sum_{k=1}^\infty a_k e^{-\lambda_k z}, C_{az+b} q_n\Big\rangle.
\end{equation}
The terms $\langle e^{-\lambda_k z}, C_{az+b} q_n \rangle$ are computed in two cases:

- Case 1: $a=0$. By Proposition \ref{bdd-a0}, $\lambda_1=0$ and hence
\[
\langle e^{-\lambda_k z}, C_b q_n \rangle =
\begin{cases}
\frac{e^{-\lam_n \overline{b}}}{\beta_n} \beta_1^2, & k=1\\
0, & k > 1.
\end{cases}
\]

- Case 2: $a\ge1$. By Theorem \ref{affine-symbol}, we have
\[
\langle e^{-\lambda_k z}, C_{az+b} q_n \rangle =
\begin{cases}
\frac{e^{-\lam_n \overline{b}}}{\beta_n}\beta_{m_n}^2, & m_n = k\\
0, & m_n \ne k.
\end{cases}
\]

Substituting back these equations into \eqref{d_n} yields the following result.

\begin{prop}\label{adj-a0} Let $\beta_*\neq\pm\infty$, and let $C_{az+b}$ be a bounded composition operator on $\Hblam$. If $\displaystyle f(z) = \sum_{n=1}^\infty a_n e^{-\lambda_n z}\in\Hblam$, then
\begin{equation}\label{C-adj}
(C_{az+b}^* f)(z)=
\begin{cases}
a_1 \beta_1^2 \dsum_{n=1}^\infty \dfrac{e^{-\lam_n \overline{b}}}{\beta_n^2} e^{-\lam_n z}, & a=0\\
\dsum_{n=1}^\infty a_{m_n} \dfrac{\beta_{m_n}^2 e^{-\lam_n \overline{b}}}{\beta_n^2} e^{-\lam_n z}, & a\ge1.
\end{cases}
\end{equation}
\end{prop}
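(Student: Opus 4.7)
The plan is to use the orthonormal basis $(q_n)_{n\geq 1}$ of $\Hblam$ provided by Proposition~\ref{prop:ONB} and expand $C_{az+b}^* f$ directly. Write $C_{az+b}^* f(z) = \sum_{n=1}^\infty d_n e^{-\lambda_n z}$; since this equals $\sum_{n=1}^\infty \beta_n d_n\, q_n(z)$ and $(q_n)$ is orthonormal, one has $d_n\beta_n = \langle C_{az+b}^* f, q_n\rangle = \langle f, C_{az+b} q_n\rangle$. This is exactly the starting identity \eqref{d_n} already set up in the excerpt. The task reduces to computing $C_{az+b}q_n$ explicitly and then reading off the inner product with $f$ using term-by-term orthogonality.

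First I would handle the case $a\geq 1$. From $q_n(z) = \beta_n^{-1} e^{-\lambda_n z}$ one gets
\[
C_{az+b}q_n(z) = \beta_n^{-1} e^{-\lambda_n b} e^{-a\lambda_n z} = \frac{\beta_{m_n}}{\beta_n} e^{-\lambda_n b}\, q_{m_n}(z),
\]
using the defining relation $a\lambda_n = \lambda_{m_n}$ from Theorem~\ref{affine-symbol}. Then by conjugate linearity in the second slot and orthonormality,
\[
\langle f, C_{az+b}q_n\rangle = \frac{\beta_{m_n}}{\beta_n} e^{-\lambda_n \overline b}\,\langle f, q_{m_n}\rangle = \frac{\beta_{m_n}^2}{\beta_n} e^{-\lambda_n \overline b}\, a_{m_n},
\]
because $\langle f, q_{m_n}\rangle = a_{m_n}\beta_{m_n}$. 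Dividing by $\beta_n$ yields the required coefficient $d_n = a_{m_n}\beta_{m_n}^2 \beta_n^{-2} e^{-\lambda_n \overline b}$, which gives the second branch of \eqref{C-adj}.

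For $a=0$, the point to keep in mind (obtained from Proposition~\ref{bdd-a0}) is that boundedness of $C_b$ forces $\lambda_1 = 0$, so that the constant function $C_b q_n(z) = \beta_n^{-1} e^{-\lambda_n b}$ can be re-expressed as a scalar multiple of the basis vector $q_1$, namely $C_b q_n = \beta_1\beta_n^{-1} e^{-\lambda_n b}\, q_1$. The same computation as above, this time using $\langle f, q_1\rangle = a_1\beta_1$, produces $d_n = a_1\beta_1^2 \beta_n^{-2} e^{-\lambda_n \overline b}$, matching the first branch of \eqref{C-adj}.

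Since the inner-product identities rely only on orthonormality and the explicit action of $C_{az+b}$ on the basis $(q_n)$, there is no genuine obstacle here; the proof is a direct assembly of the identities already set up before the statement. The only point worth flagging is the legitimacy of identifying the resulting Dirichlet series with the function $C_{az+b}^* f \in \Hblam$, which is automatic by uniqueness of coefficients (see \cite[page 8]{Valiron}), already invoked in the definition of the inner product \eqref{eq:inner-product-dirichlet}.
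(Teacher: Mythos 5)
Your proposal is correct and follows essentially the same route as the paper: both start from the identity $d_n\beta_n=\langle f, C_{az+b}q_n\rangle$ and evaluate it by computing the action of $C_{az+b}$ on the basis $(q_n)$, splitting into the cases $a=0$ (where $\lambda_1=0$ makes $C_bq_n$ a multiple of $q_1$) and $a\ge1$ (where $C_{az+b}q_n$ is a multiple of $q_{m_n}$). The computations check out, so nothing further is needed.
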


\begin{prop}\label{eigenvalueprop}
	Let $a\geq 1$. The eigenvalues of $C_{az+b}^* C_{az+b}$ are precisely $r_k(a,\Re(b))^2$, $k\in\N$.
\end{prop}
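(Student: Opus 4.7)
The plan is to show that $C_{az+b}^* C_{az+b}$ is diagonal with respect to the orthonormal basis $(q_n)_{n\ge 1}$ of $\Hblam$, with diagonal entry $r_n(a,\Re(b))^2$ at position $n$. Once this is established, the conclusion follows from the standard fact that the point spectrum of an operator diagonal in an orthonormal basis consists exactly of its diagonal entries.

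First, I would recall the explicit action already computed in the proof of Proposition \ref{cri-bdd-a>1}: for every $n\ge 1$,
\[
C_{az+b} q_n = c_n\, q_{m_n}, \qquad c_n := \frac{\beta_{m_n}}{\beta_n}\,e^{-\lambda_n b}.
\]
Since $a\ge 1$, the map $n\mapsto m_n$ is strictly increasing (see the remark following Proposition \ref{gen-symbol}) and in particular injective, so $(q_{m_n})_{n\ge 1}$ is an orthonormal system in $\Hblam$.

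Next I would identify $C_{az+b}^*q_{m_n}$ by a direct inner-product calculation: for any $k\ge 1$,
\[
\langle C_{az+b}^* q_{m_n}, q_k\rangle = \langle q_{m_n}, C_{az+b} q_k\rangle = \overline{c_k}\,\langle q_{m_n}, q_{m_k}\rangle = \overline{c_n}\,\delta_{n,k},
\]
where the last equality uses the injectivity of $n\mapsto m_n$. Hence $C_{az+b}^* q_{m_n} = \overline{c_n}\, q_n$; the same identity can alternatively be read off from Proposition \ref{adj-a0} applied to $f=q_{m_n}$. Composing the two identities gives
\[
C_{az+b}^* C_{az+b}\, q_n = c_n\, C_{az+b}^* q_{m_n} = |c_n|^2\, q_n = r_n(a,\Re(b))^2\, q_n,
\]
so each $q_n$ is indeed an eigenvector with the claimed eigenvalue.

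To confirm that no other eigenvalues arise, I would observe that if $(C_{az+b}^*C_{az+b}-\mu I)f=0$ for some nonzero $f=\sum_n \xi_n q_n \in \Hblam$, expanding in the orthonormal basis yields $\xi_n\bigl(r_n(a,\Re(b))^2-\mu\bigr)=0$ for every $n$, which forces $\mu=r_k(a,\Re(b))^2$ for some index $k$ with $\xi_k\ne 0$. I do not anticipate a real obstacle here; the only subtle point is the appeal to injectivity of $n\mapsto m_n$, which ensures that $C_{az+b}^*$ sends $q_{m_n}$ to a scalar multiple of $q_n$ with no extra contributions, and this is automatic once $a\ge 1$.
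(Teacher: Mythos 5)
Your proof is correct and follows essentially the same route as the paper: both show that $C_{az+b}^*C_{az+b}$ is diagonal with respect to the orthonormal basis $(q_n)_{n\ge 1}$ (the paper works with the rescaled vectors $g_k=\beta_k q_k$ and invokes the adjoint formula of Proposition \ref{adj-a0} directly, while you recover the same identity by an inner-product computation) and then conclude that the eigenvalues are exactly the diagonal entries. Your explicit verification that no other eigenvalues occur merely spells out what the paper asserts in its last sentence.
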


\begin{proof}
	We appeal to Proposition \ref{adj-a0} which provides an explicit formula for the adjoint. Let $g_k = \beta_k q_k$ for all $k\in\N$. We have $(C_{az+b} g_k)(z) = e^{-\lam_k b} e^{-\lam_{m_k} z}$. It follows that
	\begin{align*}
	(C_{az+b}^* C_{az+b} g_k)(z) &= e^{-\lam_k b} \frac{\beta_{m_k}^2}{\beta_k^2} e^{-\lam_k \ol{b}} e^{-\lam_k z} = e^{-2\lam_k \Re(b)} \frac{\beta_{m_k}^2}{\beta_k^2} g_k(z)\\
	&= r_k(a,\Re(b))^2 g_k(z).
	\end{align*}
	
	Now, recall that $(g_k)_{k\geq 1}$ forms an orthogonal basis of $\Hblam$. It follows that the only eigenvalues are precisely those corresponding to these vectors, i.e. precisely $r_k(a,\Re(b))^2$, $k\in\N$.
\end{proof}

As $r_k(a,\Re(b)) > 0$ for every $k$, we have the following corollary.

\begin{cor}
	The eigenvalues of $|C_{az+b}|$ are precisely the values $r_k(a,\Re(b))$, $k\in\N$.
\end{cor}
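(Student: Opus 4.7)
The plan is to deduce this corollary directly from Proposition~\ref{eigenvalueprop} by invoking the spectral definition of the operator modulus. Recall that $|C_{az+b}|$ is defined as the unique positive square root $(C_{az+b}^{*}C_{az+b})^{1/2}$, which by the functional calculus for self-adjoint operators has spectrum obtained by taking (non-negative) square roots of the spectrum of $C_{az+b}^{*}C_{az+b}$. More concretely, since Proposition~\ref{eigenvalueprop} identifies an explicit orthogonal eigenbasis $(g_k)_{k\geq 1}$ with $C_{az+b}^{*}C_{az+b}\,g_k = r_k(a,\Re(b))^{2}\,g_k$, the very same vectors diagonalise the positive square root: $|C_{az+b}|\,g_k = r_k(a,\Re(b))\,g_k$, where we take the positive square root because $r_k(a,\Re(b))>0$.

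The step I would write out carefully is the verification that these exhaust all eigenvalues. Since $(g_k)_{k\geq 1}$ is an orthogonal basis of $\Hblam$, any eigenvector of $|C_{az+b}|$ for an eigenvalue $\mu$ must lie in the closed span of the $g_k$'s for which $r_k(a,\Re(b))=\mu$; otherwise one obtains a nonzero component living in an orthogonal eigenspace, contradicting the diagonal action. Hence no additional eigenvalues can appear.

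The only potential obstacle is a notational one: one should check that the formula $|C_{az+b}|\,g_k = r_k(a,\Re(b))\,g_k$ really does define the positive square root, i.e. that this diagonal operator is bounded and positive. Boundedness is immediate since $(r_k(a,\Re(b)))_{k\geq 1}$ is bounded by Proposition~\ref{cri-bdd-a>1}, and positivity is clear from $r_k(a,\Re(b))>0$. Its square equals $C_{az+b}^{*}C_{az+b}$ by Proposition~\ref{eigenvalueprop}, so by uniqueness of the positive square root it coincides with $|C_{az+b}|$, and the set of eigenvalues is exactly $\{r_k(a,\Re(b)):k\in\N\}$.
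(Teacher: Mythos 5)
Your proof is correct and follows the same route as the paper, which simply notes that since the $r_k(a,\Re(b))$ are positive, the eigenvalues of $|C_{az+b}|=(C_{az+b}^*C_{az+b})^{1/2}$ are the positive square roots of those of $C_{az+b}^*C_{az+b}$ found in Proposition~\ref{eigenvalueprop}. You merely spell out the uniqueness-of-positive-square-root and exhaustiveness details that the paper leaves implicit.
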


As a consequence by direct substitution, we thus have the following theorem.

\begin{thm}\label{affineschatten}
	Let $0 < p < \infty$ and $a\geq 1$. Then, $C_{az+b}$ is a $p$th Schatten class operator if and only if
	\begin{align*}
	\dsum_{k=1}^\infty r_k(a,\Re(b))^p < \infty.
	\end{align*} 
\end{thm}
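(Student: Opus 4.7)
The proof is essentially a bookkeeping exercise built on the spectral computation already carried out in Proposition~\ref{eigenvalueprop}, combined with the diagonal-operator characterization of Schatten classes recalled immediately before Proposition~\ref{HS}. The plan is to exhibit $(C_{az+b}^*C_{az+b})^{p/4}$ as a diagonal operator on a concrete orthonormal basis and read off when its Hilbert--Schmidt norm is finite.

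First I would pass from the orthogonal family $(g_k)_{k\geq 1} = (\beta_k q_k)_{k\geq 1}$ used in Proposition~\ref{eigenvalueprop} to the orthonormal basis $(q_k)_{k\geq 1}$ of $\Hblam$ supplied by Proposition~\ref{prop:ONB}. Since eigenvalue relations are invariant under scaling of eigenvectors, the identity $C_{az+b}^*C_{az+b}\,g_k = r_k(a,\Re(b))^2 g_k$ immediately gives
\[
C_{az+b}^*C_{az+b}\,q_k = r_k(a,\Re(b))^2\, q_k, \qquad k\in \N.
\]
Thus $C_{az+b}^*C_{az+b}$ is diagonal in the orthonormal basis $(q_k)_{k\geq 1}$, with non-negative eigenvalues $r_k(a,\Re(b))^2$.

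Next I would apply the functional calculus: since $(C_{az+b}^*C_{az+b})$ is a positive diagonal operator in the basis $(q_k)$, so is its $(p/4)$-th power, and one has
\[
(C_{az+b}^*C_{az+b})^{p/4} q_k = r_k(a,\Re(b))^{p/2} q_k, \qquad k\in\N.
\]
Computing the Hilbert--Schmidt norm in the basis $(q_k)$ yields
\[
\bignorm{(C_{az+b}^*C_{az+b})^{p/4}}_{\mathrm{HS}}^2
= \sum_{k=1}^\infty \bignorm{(C_{az+b}^*C_{az+b})^{p/4}q_k}^2
= \sum_{k=1}^\infty r_k(a,\Re(b))^p.
\]
By the definition of the Schatten $p$-class recalled at the beginning of the subsection, $C_{az+b}$ lies in $S_p$ if and only if the left-hand side is finite, which is exactly the claimed summability condition.

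There is no genuine obstacle beyond bookkeeping; the only minor point to be careful about is that Proposition~\ref{eigenvalueprop} was stated with respect to the orthogonal (but not normalized) basis $(g_k)$, so one must explicitly note that normalizing to $(q_k)$ preserves the eigenvalues. Alternatively, one could invoke directly the ``diagonal operator'' criterion stated in the preamble of Subsection~4.2 applied to $T=|C_{az+b}|$, whose eigenvalues were already identified as $r_k(a,\Re(b))$ in the corollary preceding the theorem; either route leads to the same conclusion in one line.
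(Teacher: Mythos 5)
Your proof is correct and follows essentially the same route as the paper: the paper likewise deduces the theorem from Proposition~\ref{eigenvalueprop} (equivalently, the corollary identifying the eigenvalues of $|C_{az+b}|$ as $r_k(a,\Re(b))$) together with the diagonal-operator criterion for Schatten $p$-class membership stated in the preamble of the subsection. Your explicit computation of $\|(C_{az+b}^*C_{az+b})^{p/4}\|_{\mathrm{HS}}$ just spells out the "direct substitution" the paper leaves implicit.
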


\begin{rem}
Note that the series 
\[
h_p(z) = \sum_{k=1}^\infty \frac{\beta_{m_k}^p}{\beta_k^p} e^{-p\lam_k z},
\]
is a Dirichlet series  of type $(p\lam_n)$, which can be considered as a ``complex version'' of the series $\dsum_{k=1}^\infty r_k(a,x)^p$ when we replace $x\in\mathbb{R}$ by $z\in\mathbb{C}$. By \cite[Chapter II, Theorem 8]{HR15} the computation of $\sigma_c$ is
\begin{align*}
\sigma_c = \limsup_{n\to\infty} \frac{\log \bigbracket{\dsum_{k=1}^n \frac{\beta_{m_k}^p}{\beta_k^p}}}{p\lam_n}.
\end{align*}
Thus, if $\Re(b)$ is larger than this value then $h_p(\Re(b))$ converges and $C_{az+b}$ is Schatten $p$-class. If $\Re(b)$ is smaller, then $C_{az+b}$ is not Schatten $p$-class.
\end{rem}

\begin{rem}
Note also that it is possible that the membership to the $p$-th Schatten class of $C_{az+b}$ does not rely on $p$ at all. Pick $a > 1$ and $\Lam = (a^k)_{k\geq 1}$. In this case, $L = 0$. Then pick $\beta = \displaystyle \left(\prod_{i=1}^{k-1} e^{a^i}\right)_{k\geq 1}$, for which $\beta_* = \dfrac{1}{a^2-a}$ and $C_{az+b}$ is a well-defined bounded composition operator on $\Hblam$. The real function $\displaystyle h_p(x) = \sum_{k=1}^\infty e^{pa^k} e^{-pa^k x} = \sum_{k=1}^\infty e^{-pa^k(x-1)}$ converges for all $x > 1$ and diverges everywhere else, regardless of the value of $p$. It follows that $C_{az+b}$ is a Schatten $p$-class operator if and only if $\Re(b) > 1$.
\end{rem}

\subsection{Compact differences}

In this section we determine when a difference of two bounded composition operators $C_{\vphi_1} - C_{\vphi_2}$ is a compact operator. Recall that the set of compact operators form a vector space. Hence by Proposition \ref{cpt-a0} and Theorem \ref{ess-norm}, it suffices to consider only the case when $\vphi_1$ and $\vphi_2$ are non-constant and the associated sequences $(r_n)$ do not have limit $0$.

\begin{lem}\label{compactdiffaffineprop}
	Let $\beta_* \neq \pm\infty$, $a, a'\geq 1$. Let $C_{az + b}$ and $C_{a'z + b'}$ be bounded composition operators on $\Hblam$. Assume that
\[
\limsup_{n\to \infty}r_n(a,\Re(b))>0\quad \mbox{and} \quad \limsup_{n\to \infty}r_n(a',\Re(b'))>0.
\]
If $C_{az + b} - C_{a'z + b'}$ is compact, then $a=a'$.
\end{lem}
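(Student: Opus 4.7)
The plan is to argue by contradiction using the orthonormal basis of probe functions $(q_n)_{n\geq 1}$. Assume $a\neq a'$ and that $C_{az+b}-C_{a'z+b'}$ is compact. Since $(q_n)$ is an orthonormal basis of $\Hblam$, it converges weakly to $0$, hence compactness of the difference forces
\[
\lim_{n\to\infty}\bigl\|(C_{az+b}-C_{a'z+b'})q_n\bigr\|=0.
\]
My goal is to contradict this by showing that the left-hand side stays bounded away from $0$ along a suitable subsequence.

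The key observation is that, by the action of composition operators computed in the proof of Proposition~\ref{cri-bdd-a>1}, one has
\[
C_{az+b}q_n=\frac{\beta_{m_n(a)}}{\beta_n}e^{-\lambda_n b}\,q_{m_n(a)},\qquad C_{a'z+b'}q_n=\frac{\beta_{m_n(a')}}{\beta_n}e^{-\lambda_n b'}\,q_{m_n(a')},
\]
where $m_n(a)$ (resp.\ $m_n(a')$) is the index with $\lambda_{m_n(a)}=a\lambda_n$ (resp.\ $\lambda_{m_n(a')}=a'\lambda_n$). Since $\lambda_n>0$ for all $n\geq 1$ (if $\lambda_1>0$) or for all $n\geq 2$ (if $\lambda_1=0$), and since $(\lambda_k)_{k\geq 1}$ is strictly increasing, the assumption $a\neq a'$ forces $\lambda_{m_n(a)}\neq \lambda_{m_n(a')}$, hence $m_n(a)\neq m_n(a')$ for all $n$ with $\lambda_n>0$.

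Consequently $q_{m_n(a)}$ and $q_{m_n(a')}$ are distinct elements of the orthonormal basis, so for all $n$ large enough,
\[
\bigl\|(C_{az+b}-C_{a'z+b'})q_n\bigr\|^2=\Bigl(\frac{\beta_{m_n(a)}}{\beta_n}e^{-\lambda_n\Re(b)}\Bigr)^2+\Bigl(\frac{\beta_{m_n(a')}}{\beta_n}e^{-\lambda_n\Re(b')}\Bigr)^2=r_n(a,\Re(b))^2+r_n(a',\Re(b'))^2.
\]
By the hypothesis $\limsup_{n\to\infty}r_n(a,\Re(b))>0$, we can extract a subsequence $(n_k)$ along which $r_{n_k}(a,\Re(b))$ is bounded below by some $\delta>0$. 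Along this subsequence
\[
\bigl\|(C_{az+b}-C_{a'z+b'})q_{n_k}\bigr\|\geq r_{n_k}(a,\Re(b))\geq \delta,
\]
contradicting the convergence to $0$ obtained from compactness.

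The argument is essentially transparent once the orthogonality observation is in place; the only small point requiring care is checking that $m_n(a)\neq m_n(a')$ whenever $\lambda_n>0$, which reduces to the strict monotonicity of $(\lambda_n)$, and handling the at-most-one exceptional index $n=1$ when $\lambda_1=0$ (here one simply restricts attention to $n\geq 2$, which is harmless since the contradicting subsequence from the $\limsup$ hypothesis can be chosen with indices $\geq 2$). No serious obstacle is anticipated.
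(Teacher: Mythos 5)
Your proof is correct and follows essentially the same route as the paper's: test the compact difference against the weakly null orthonormal basis $(q_n)$, use $m_n(a)\neq m_n(a')$ to get orthogonality and hence $\|(C_{az+b}-C_{a'z+b'})q_n\|^2=r_n(a,\Re(b))^2+r_n(a',\Re(b'))^2$, and contradict the $\limsup$ hypothesis. Your extra care with the exceptional index $n=1$ when $\lambda_1=0$ (where $m_1(a)=m_1(a')=1$) is a small refinement the paper glosses over, but the argument is the same.
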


\begin{proof}
	Consider the sequence of probe functions $(q_k)_{k\geq 1}$. By Proposition \ref{prop:ONB}, $q_k\rightharpoonup 0$. Since $C_{az + b} - C_{a'z + b'}$ is compact, therefore $\|(C_{az + b} - C_{a'z + b'})q_k\| \to 0$. Assume to the contrary that $a \neq a'$. Now, since for each $k$ we have $m_{k}^{(a)} \neq m_{k}^{(a')}$, therefore we have
	\begin{align*}
	\|(C_{az + b} - C_{a'z + b'})q_k\|^2 &= \bignorm{\frac{1}{\beta_k} e^{-\lambda_k b}e^{-\lambda_{m_{k}^{(a)}} z} - \frac{1}{\beta_k} e^{-\lambda_k b'}e^{-\lambda_{m_{k}^{(a')}} z}}^2\\
	&= \frac{1}{\beta_k^2} e^{-2\lambda_k \Re (b)}\beta_{m_{k}^{(a)}}^2 + \frac{1}{\beta_k^2} e^{-2\lambda_k \Re (b')}\beta_{m_{k}^{(a')}}^2\\
	&= r_k(a, \Re(b))^2 + r_k(a', \Re(b'))^2,
	\end{align*}
which gives the desired contradiction.
\end{proof}

\begin{prop}\label{prop:essentiel-norme-difference}
Let $\beta_*\neq\pm\infty$, $a\geq 1$. Suppose that $C_{az+b}$ and $C_{az+b'}$ are bounded composition operators on $\Hblam$. Then
 $$
 \|C_{az+b} - C_{az+b'}\|_e = \limsup\limits_{k\to\infty} \dfrac{\beta_{m_k}}{\beta_k}\bigabs{e^{-\lambda_k b} - e^{-\lambda_k b'}}.
 $$
\end{prop}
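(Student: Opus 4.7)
The plan is to mirror the two-sided bounding argument used in the proof of Theorem~\ref{ess-norm}, treating $T := C_{az+b} - C_{az+b'}$ as a single operator and exploiting the fact that both composition operators share the same multiplier $a$ and hence the same index function $n \mapsto m_n$. The key structural observation is that on each element $q_n$ of the orthonormal basis one has
\[
T q_n = \frac{\beta_{m_n}}{\beta_n}\bigl(e^{-\lambda_n b} - e^{-\lambda_n b'}\bigr)\, q_{m_n},
\]
and since $n \mapsto m_n$ is strictly increasing when $a \geq 1$ (as noted in the remark after Proposition~\ref{gen-symbol}), the vectors $(T q_n)_{n\geq 1}$ lie along pairwise distinct orthonormal basis directions $q_{m_n}$. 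This is the pivotal feature that makes the computation go through cleanly.

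For the upper bound, I would use exactly the finite-rank truncations $K_N$ from the proof of Theorem~\ref{ess-norm}, so that $T K_N$ is compact and $\|T\|_e \leq \|T - T K_N\|_{\rm op}$. Writing an arbitrary $f = \sum_n c_n q_n$ with $\|f\|^2 = \sum_n |c_n|^2$, the mutual orthogonality of the images $T q_n$ gives
\[
\|T(f - K_N f)\|^2 = \sum_{n > N} |c_n|^2\, \frac{\beta_{m_n}^2}{\beta_n^2}\bigl|e^{-\lambda_n b} - e^{-\lambda_n b'}\bigr|^2 \leq \|f\|^2 \sup_{n > N}\frac{\beta_{m_n}^2}{\beta_n^2}\bigl|e^{-\lambda_n b} - e^{-\lambda_n b'}\bigr|^2,
\]
and letting $N \to \infty$ yields $\|T\|_e \leq \limsup_{k\to\infty}\frac{\beta_{m_k}}{\beta_k}|e^{-\lambda_k b} - e^{-\lambda_k b'}|$.

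For the lower bound, I would again use that the probe sequence $(q_n)$ is an orthonormal basis, hence $q_n \rightharpoonup 0$, so $\|K q_n\| \to 0$ for any compact operator $K$ on $\Hblam$. The standard estimate
\[
\|T - K\|_{\rm op} \geq \|(T-K)q_n\| \geq \|T q_n\| - \|K q_n\|
\]
together with $\|T q_n\| = \frac{\beta_{m_n}}{\beta_n}|e^{-\lambda_n b} - e^{-\lambda_n b'}|$ and a $\limsup$ in $n$ gives $\|T - K\|_{\rm op} \geq \limsup_{k\to\infty}\frac{\beta_{m_k}}{\beta_k}|e^{-\lambda_k b} - e^{-\lambda_k b'}|$, after which taking the infimum over compact $K$ produces the matching lower bound.

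The only delicate point in the argument is confirming that the orthogonality of the $q_{m_n}$ really does eliminate all cross terms in $\|T(f - K_N f)\|^2$, which is where the strict monotonicity of $n \mapsto m_n$ enters in an essential way; once this is secured, the remaining steps are routine adaptations of the proof of Theorem~\ref{ess-norm}.
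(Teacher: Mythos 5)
Your proposal is correct and follows essentially the same two-sided argument as the paper's proof: the upper bound via the truncations $K_N$ and the termwise estimate, and the lower bound via the weakly null probe sequence $(q_n)$. The point you single out as delicate --- that the images $Tq_n$ lie along the pairwise distinct basis directions $q_{m_n}$ because $n\mapsto m_n$ is strictly increasing for $a\geq 1$ --- is used implicitly in the paper's norm computation, so making it explicit is a reasonable refinement rather than a departure.
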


\begin{proof}
	As before we define the finite-rank (and compact) partial sum operator $K_N$ (see the proof of Theorem~\ref{ess-norm}). Then $(C_{az+b} - C_{az+b'}) K_N$ is compact and
	\begin{align*}
	\|C_{az+b} - C_{az+b'}\|_e \leq \|(C_{az+b} - C_{az+b'}) (I-K_N)\|_{\text{op}},
	\end{align*}
	where $I$ is the identity operator on $\Hblam$.
	
	Let $f(z) = \dsum_{n=1}^\infty a_n e^{-\lam_n z}\in\Hblam$. We then have
	\begin{align*}
	\|(C_{az+b} - C_{az+b'})(I-K_N)(f)\|^2 &= \bignorm{\sum_{n=N+1}^\infty a_n e^{-\lam_n b}e^{-\lam_{m_n} z} - \sum_{n=N+1}^\infty a_n e^{-\lam_n b'}e^{-\lam_{m_n} z}}^2\\
	&= \sum_{n=N+1}^\infty |a_n|^2 \bigabs{e^{-\lam_n b} - e^{-\lam_n b'}}^2 \beta_{m_n}^2\\
	&\leq \sup_{n\geq N+1} \dfrac{\beta_{m_n}^2}{\beta_n^2}\bigabs{e^{-\lambda_n b} - e^{-\lambda_n b'}}^2 \|f\|^2.
	\end{align*}
	It therefore follows that $\|(C_{az+b} - C_{az+b'})(I-K_N)\| \leq \sup\limits_{n\geq N+1} \dfrac{\beta_{m_n}}{\beta_n}\bigabs{e^{-\lambda_n b} - e^{-\lambda_n b'}}$. Taking limits as $N\to\infty$ gives $\|C_{az+b} - C_{az+b'}\|_e \leq \limsup\limits_{n\to\infty} \dfrac{\beta_{m_n}}{\beta_n}\bigabs{e^{-\lambda_n b} - e^{-\lambda_n b'}}$.
	
	On the other hand, consider the probe functions $q_k$. Let $K$ be a compact operator on $\Hblam$. As before, we have $\|q_k\| = 1$ for all $k$ and $\|Kq_k\|\to 0$. We have
	\begin{align*}
	\|(C_{az+b} - C_{az+b'}) - K\| &\geq \limsup_{k\to\infty} \bigbracket{\|(C_{az+b} - C_{az+b'}) q_k\| - \|Kq_k\|}\\
	&\geq \limsup_{k\to\infty} \|(C_{az+b} - C_{az+b'}) q_k\|\\
	&= \limsup_{k\to\infty} \dfrac{\beta_{m_k}}{\beta_k}\bigabs{e^{-\lambda_k b} - e^{-\lambda_k b'}}.
	\end{align*}
	Taking infimum over all compact operators $K$ gives 
\[
\|C_{az+b} - C_{az+b'}\|_e \geq \limsup\limits_{n\to\infty} \dfrac{\beta_{m_n}}{\beta_n}\bigabs{e^{-\lambda_n b} - e^{-\lambda_n b'}}.
\]
\end{proof}

\begin{thm}\label{compactdiffcharacterisation}
Let $\beta_*\neq\pm\infty$, and let $C_{\vphi_1}$ and $C_{\vphi_2}$ be bounded composition operators on $\Hblam$. Then the difference $C_{\vphi_1} - C_{\vphi_2}$ is compact if and only if

(1) both $C_{\vphi_1}$ and $C_{\vphi_2}$ are compact\

or\

(2) $a\geq 1$, $\vphi_1(z) = az+b$ and $\vphi_2(z) = az+b'$, where
		\begin{enumerate}
			\item[(i)] we have
			\[
\limsup_{n\to \infty}r_n(a,\Re(b))>0\quad \mbox{and} \quad \limsup_{n\to \infty}r_n(a,\Re(b'))>0,
\] 			\item[(ii)] and  $\lim\limits_{k\to\infty} \dfrac{\beta_{m_k}}{\beta_k}\bigabs{e^{-\lambda_k b} - e^{-\lambda_k b'}} = 0$.
		\end{enumerate}

\end{thm}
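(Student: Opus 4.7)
The plan is to prove both implications by a case analysis that reduces everything to results already established in Section~4: Proposition~\ref{cpt-a0} (constant symbols yield rank-one, hence compact, operators), Theorem~\ref{ess-norm} (compactness of a single $C_{az+b}$ is equivalent to $\lim_{n\to\infty} r_n(a,\Re(b))=0$), Lemma~\ref{compactdiffaffineprop} (a compact difference of operators with positive $\limsup r_n$ forces equal slopes), and above all Proposition~\ref{prop:essentiel-norme-difference} (exact formula for the essential norm of $C_{az+b}-C_{az+b'}$).

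The sufficiency is immediate. If (1) holds, $C_{\vphi_1}-C_{\vphi_2}$ is compact as a difference of compact operators. If (2) holds, Proposition~\ref{prop:essentiel-norme-difference} combined with condition (ii) gives $\|C_{\vphi_1}-C_{\vphi_2}\|_e=0$, whence the difference is compact.

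For the necessity, assume $C_{\vphi_1}-C_{\vphi_2}$ is compact. By Theorem~\ref{cri-bdd}, each $\vphi_i$ is either a constant (only possible when $\lambda_1=0$) or of the form $a_iz+b_i$ with $a_i\geq 1$. If either $\vphi_i$ is a constant, then $C_{\vphi_i}$ is compact by Proposition~\ref{cpt-a0}, and since the compact operators form a vector space, the other operator is compact as well, placing us in case (1). We may therefore assume $\vphi_i(z)=a_iz+b_i$ with $a_i\geq 1$ for both $i=1,2$. By Theorem~\ref{ess-norm}, $C_{\vphi_i}$ is compact if and only if $\lim_{n\to\infty} r_n(a_i,\Re(b_i))=0$. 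Invoking the vector-space argument once more, either both limits vanish (case (1)) or neither does; in the latter case, since the sequences $\big(r_n(a_i,\Re(b_i))\big)_n$ are non-negative, ``$\lim\neq 0$'' is equivalent to $\limsup_{n\to\infty} r_n(a_i,\Re(b_i))>0$, which is condition (i) of (2). Lemma~\ref{compactdiffaffineprop} then forces $a_1=a_2=:a$, and Proposition~\ref{prop:essentiel-norme-difference} yields
\[
0=\|C_{az+b_1}-C_{az+b_2}\|_e=\limsup_{k\to\infty}\dfrac{\beta_{m_k}}{\beta_k}\bigabs{e^{-\lambda_k b_1}-e^{-\lambda_k b_2}},
\]
so this $\limsup$ is actually a limit equal to $0$, establishing condition (ii).

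The main obstacle is bookkeeping rather than any substantively new idea: the deep content has already been packaged into Proposition~\ref{prop:essentiel-norme-difference} and Lemma~\ref{compactdiffaffineprop}. The only subtle point is handling the ``mixed'' scenario in which one operator is compact and the other is not, and this is dispatched at once by the ideal property of the compact operators.
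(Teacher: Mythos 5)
Your proof is correct and follows essentially the same route as the paper's: sufficiency via Proposition~\ref{prop:essentiel-norme-difference}, and necessity via the vector-space property of the compacts, Theorem~\ref{ess-norm}, Lemma~\ref{compactdiffaffineprop}, and Proposition~\ref{prop:essentiel-norme-difference} again. Your explicit handling of the constant-symbol case is a detail the paper relegates to the subsection's preamble, but the argument is identical in substance.
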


\begin{proof}
Assume first that $C_{\vphi_1} - C_{\vphi_2}$ is compact but one of the operators $C_{\vphi_1}$ and $C_{\vphi_2}$ is not compact. Since the set of compact operators is a vector space, it implies that indeed both operators $C_{\vphi_1}$ and $C_{\vphi_2}$ are not compact. Now, according to Theorem \ref{ess-norm}, it means that condition (i) is satisfied (with the second inequality in $a'$ instead of $a$). Then, we can apply Lemma~\ref{compactdiffaffineprop} to get that $a = a'$, i.e. $\vphi_1(z)=az+b$ and $\vphi_2(z)=az+b'$. Condition (ii) now follows immediately from Proposition~\ref{prop:essentiel-norme-difference}.

Conversely, if both operators $C_{\vphi_1}$ and $C_{\vphi_2}$ are compact, then their difference is compact. Suppose now that $\vphi_1(z) = az+b$ and $\vphi_2(z) = az+b'$, and (i) and (ii) are satisfied. Proposition~\ref{prop:essentiel-norme-difference} implies that $\|C_{\vphi_1}-C_{\vphi2}\|_e=0$, which gives that  $C_{\vphi_1} - C_{\vphi_2}$ is compact.
\end{proof}

\begin{cor}\label{ehhhhhhhhhhhhhh} Let $\beta_*\neq\pm\infty$, and let $C_{z+ci}$ and $C_{z+c'i}$ ($c,c'\in\R$) be bounded composition operators on $\mathcal H(\beta,\Lambda)$. The operator $C_{z+ci} - C_{z+c'i}$ is compact if and only if $\lim\limits_{k\to\infty} \cos(\lambda_k(c - c')) = 1$.
\end{cor}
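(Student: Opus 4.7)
The plan is to specialize Theorem \ref{compactdiffcharacterisation} to the symbols $\varphi_1(z)=z+ci$ and $\varphi_2(z)=z+c'i$. Here $a=1$ and $\Re(b)=\Re(b')=0$, so by Proposition \ref{cpt-a1} neither $C_{z+ci}$ nor $C_{z+c'i}$ is compact. Therefore case (1) of Theorem \ref{compactdiffcharacterisation} cannot be invoked, and compactness of the difference is equivalent to case (2) being satisfied with $a=1$.

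Next, I would observe that when $a=1$, the index $m_k$ arising from $\lambda_{m_k}=a\lambda_k$ is forced by the strict monotonicity of $(\lambda_n)$ to be $m_k=k$, so $\beta_{m_k}/\beta_k=1$ for all $k$. This simplifies $r_n(1,\Re(b))=e^{-\lambda_n\cdot 0}=1$ and likewise for $b'$, so $\limsup_n r_n(1,0)=1>0$ and condition (i) of Theorem \ref{compactdiffcharacterisation}(2) is automatic. Thus compactness of the difference is equivalent to condition (ii), namely
\[
\lim_{k\to\infty}\bigabs{e^{-\lambda_k ci}-e^{-\lambda_k c'i}}=0.
\]

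The final step is a direct trigonometric computation: since $|e^{-\lambda_k ci}|=|e^{-\lambda_k c'i}|=1$ and $e^{-\lambda_k ci}\overline{e^{-\lambda_k c'i}}=e^{i\lambda_k(c'-c)}$, one has
\[
\bigabs{e^{-\lambda_k ci}-e^{-\lambda_k c'i}}^2=2-2\cos(\lambda_k(c-c')).
\]
This quantity tends to $0$ if and only if $\cos(\lambda_k(c-c'))\to 1$, which yields the stated equivalence. No serious obstacle is expected here: everything follows by unpacking Theorem \ref{compactdiffcharacterisation} in the trivial case $a=1$, $m_k=k$, and performing the elementary identity above.
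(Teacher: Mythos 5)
Your proposal is correct and follows essentially the same route as the paper: both note that $m_k=k$ and $r_n(1,0)=1$ (so neither operator is compact by Proposition~\ref{cpt-a1} and condition (i) of Theorem~\ref{compactdiffcharacterisation} holds automatically), reduce compactness of the difference to condition (ii), and finish with the identity $|e^{-\lambda_k ci}-e^{-\lambda_k c'i}|^2=2-2\cos(\lambda_k(c-c'))$. No discrepancies to report.
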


\begin{proof}
We see that $m_k=k$ and so $\beta_{m_k}=\beta_k$, $k\in\N$. Moreover, since $\Re(ci)=\Re(c'i)=0$, we have $r_n(a,\Re(ci))=r_n(a',\Re(c'i))=1$. Then, according to Proposition~\ref{cpt-a1} and Theorem~\ref{compactdiffcharacterisation}, we see that $C_{z+ci} - C_{z+c'i}$ is compact if and only if
\[
\lim_{k\to \infty}|e^{-\lambda_k ci}-e^{-\lambda_k c'i}|=0.
\]
An easy computation shows that
\[
|e^{-\lambda_k ci}-e^{-\lambda_k c'i}|^2=2-2\cos(\lambda_k(c-c')),
\]
which gives the result.
\end{proof}
Corollary~\ref{ehhhhhhhhhhhhhh} is an analogue of \cite[Theorem 4.12]{HHK} corresponding to the case when $\beta_*=\infty$.

\section{\bf Closed range and cyclicity}

In this section, we assume that $C_{az+b}$ is a bounded composition operator on $\Hblam$, which means that $a$ and $b$ satisfies conditions of Theorem~\ref{cri-bdd}.
\subsection{Closed range}

We denote by $R(C_\vphi)$ the range of $C_\vphi$, i.e. $C_\vphi(\Hblam)$. In this section we determine when $R(C_\vphi)$ is closed. Note that when $\varphi(z)=b$ (and $\lambda_1=0$), then $R(C_\vphi)$ is a one dimensional space (generated by the constant function $1=e^{-\lambda_1 z}$) and so it is closed.

\begin{prop}\label{prop:closeness}
Let $\beta_*\neq\pm\infty$, $a\geq 1$. Then, $R(C_{az+b})$ is closed if and only if $\inf\limits_{n\in\N} r_n(a,\Re(b)) > 0$.
\end{prop}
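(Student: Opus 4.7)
The plan is to exploit the explicit action of $C_{az+b}$ on the orthonormal basis $(q_n)_{n\geq 1}$ provided by Proposition \ref{prop:ONB}. Recall that $C_{az+b}q_n=e^{-\lambda_n b}\tfrac{\beta_{m_n}}{\beta_n}q_{m_n}$, so $\|C_{az+b}q_n\|=r_n(a,\Re(b))$. Because $a\geq 1$, the remark following Proposition \ref{gen-symbol} tells us that $n\mapsto m_n$ is strictly increasing, so $(q_{m_n})_{n\geq 1}$ is a subfamily of an orthonormal basis and is therefore itself an orthonormal system in $\Hblam$.

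First I would compute, for an arbitrary $f=\sum_{n\geq 1}a_n q_n\in\Hblam$,
\[
C_{az+b}f=\sum_{n\geq 1}a_n e^{-\lambda_n b}\frac{\beta_{m_n}}{\beta_n}q_{m_n}.
\]
Using the orthonormality of $(q_{m_n})_{n\geq 1}$, this immediately yields the Parseval-type identity
\[
\|C_{az+b}f\|^2=\sum_{n\geq 1}|a_n|^2\, r_n(a,\Re(b))^2,\qquad \|f\|^2=\sum_{n\geq 1}|a_n|^2.
\]
In particular, since $r_n(a,\Re(b))>0$ for every $n$, it follows that $C_{az+b}f=0$ forces $a_n=0$ for all $n$, so $C_{az+b}$ is injective, and therefore $(\ker C_{az+b})^\perp=\Hblam$.

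Next I would invoke the standard functional-analytic criterion: a bounded operator $T$ on a Hilbert space has closed range if and only if it is bounded below on $(\ker T)^\perp$. Given injectivity, closedness of $R(C_{az+b})$ is equivalent to the existence of a constant $c>0$ with $\|C_{az+b}f\|\geq c\|f\|$ for every $f\in\Hblam$.

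Finally I would derive the stated characterization from the identity above. If $\delta:=\inf_{n\in\N}r_n(a,\Re(b))>0$, then
\[
\|C_{az+b}f\|^2\geq \delta^2\sum_{n\geq 1}|a_n|^2=\delta^2\|f\|^2,
\]
so $C_{az+b}$ is bounded below and $R(C_{az+b})$ is closed. Conversely, if $R(C_{az+b})$ is closed and $c>0$ is a constant of boundedness from below, then testing on the unit vectors $q_n$ gives $r_n(a,\Re(b))=\|C_{az+b}q_n\|\geq c$, whence $\inf_{n\in\N}r_n(a,\Re(b))\geq c>0$. Both directions being straightforward, there is no real obstacle here; the only thing to check with care is the injectivity argument, which depends crucially on the strict increase of $n\mapsto m_n$ (and hence on the assumption $a\geq 1$).
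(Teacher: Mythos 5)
Your proof is correct and follows essentially the same route as the paper: both directions rest on the identity $\|C_{az+b}f\|^2=\sum_n |a_n|^2 r_n(a,\Re(b))^2$, the equivalence of closed range with boundedness below for an injective operator, and testing on the basis vectors $q_n$ for the converse. The only cosmetic difference is that you deduce injectivity from the Parseval identity (using that $n\mapsto m_n$ is strictly increasing), whereas the paper invokes the uniqueness principle for non-constant analytic symbols; both are valid.
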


\begin{proof}
First note that it follows from the open mapping theorem and uniqueness principle for analytic functions that, since $\varphi$ is a non-constant analytic function, then $C_\vphi$ is injective. Now, suppose $B := \inf\limits_{n\in\N} r_n(a,\Re(b)) > 0$. Let $f(z) = \dsum_{n=1}^\infty a_n e^{-\lam_n z}$. Then,
	\begin{align*}
	\frac{\|C_{az+b} f\|^2}{\|f\|^2} = \frac{\dsum_{n=1}^\infty |a_n|^2\beta_{m_n}^2 e^{-2\lam_n \Re(b)}}{\dsum_{n=1}^\infty |a_n|^2\beta_n^2} = \frac{\dsum_{n=1}^\infty |a_n|^2\beta_n^2r_n(a,\Re(b))^2}{\dsum_{n=1}^\infty |a_n|^2\beta_n^2} \geq B^2.
	\end{align*}
	Thus $C_{az+b}$ is bounded from below and $R(C_{az+b})$ is hence closed.
	
	On the other hand, suppose $B := \inf\limits_{n\in\N} r_n(a,\Re(b)) = 0$. Let $(n_k)$ be a subsequence of $\N$ such that $r_{n_k}(a,\Re(b)) \to 0$. For each probe function $q_{n_k}$, we then have
	\begin{align*}
	\frac{\|C_{az+b} q_{n_k}\|^2}{\|q_{n_k}\|^2} = \frac{\frac{1}{\beta_{n_k}^2}\beta_{m_{n_k}}^2 e^{-2\lam_n \Re(b)}}{1} = r_{n_k}(a,\Re(b))^2 \to 0.
	\end{align*}
	It follows that $C_{az+b}$ cannot be bounded from below, and so $R(C_{az+b})$ is not closed.
\end{proof}
Proposition~\ref{prop:closeness} is an analogue of \cite[Theorem 6]{Doan-Khoi}.

\begin{rem}
	In the non-constant case, a compact $C_{az+b}$ cannot have closed range, and vice-versa if $C_{az+b}$ has closed range, it is not compact.
\end{rem}

\subsection{Cyclicity}

Let $\calH$ be a Hilbert space and $T:\calH\to \calH$ be a bounded operator. We define the orbit of a vector $x\in \calH$ (w.r.t. $T$) as the set
\begin{align*}
\Orb(T,x) = \bigcurlybracket{T^nx: n\in\N}.
\end{align*}
Furthermore, we recall that $T$ is said to be 
\begin{itemize}
	\item cyclic if there exists $x\in \calH$ such that
	\[
	\ol{\Span(\Orb(T,x))} = \calH,
	\]
	\item supercyclic if there exists $x\in \calH$ such that
	\[
	\ol{\bigcurlybracket{\mu y: y\in \Orb(T,x), \mu\in\C}} = \calH.
	\]
\end{itemize}

Note that for a given operator on an Hilbert space $\calH$,  if $T$ is supercyclic, then it is of course cyclic. We will discuss in this section the cyclicity and supercyclicity of the operators $C_\varphi$ on $\Hblam$. As we will see, $C_\varphi$ is never supercyclic but cyclicity will depend on the arithmetic properties of $(\lambda_n)_{n\geq 1}$.

It is trivial that if $\lambda_1=0$, then of course the operators $C_b$ and $C_z$ are not cyclic. Indeed, in both cases the orbit of $f$ (for every $f\in\Hblam$) contains only one function and so the orbit cannot generate a dense subspace.

We now split our study in two cases, depending whether $a=1$ or $a>1$.

\noindent \textbf{- The case $a=1$}.

To study this case, we need the following two general results. The first one is quite classical and can be found for instance in \cite[Chap. 18]{Halmos} for the finite dimensional case and in \cite[Lemma 1]{Seubert} for the general case. The proof of the second one can be found in \cite{Hilden} and uses the spectral mapping theorem.

\begin{lem}\label{Halmos}
Let $D$ be a diagonal operator on an Hilbert space $\mathcal H$, given by $De_n=s_n e_n$, $n\geq 1$, where $(e_n)_{n\geq 1}$ is an orthonormal basis of $\mathcal H$. Then $D$ is cyclic if and only if $s_n\neq s_m$, $n\neq m$.
\end{lem}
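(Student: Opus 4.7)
The plan is to prove the two implications separately.

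\medskip

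\textbf{``Only if''.} I would argue by contrapositive: suppose $s_n = s_m =: \lambda$ for some $n \neq m$, and show that no $x \in \mathcal{H}$ can be cyclic. The eigenspace $E_\lambda := \ker(D - \lambda I)$ contains both $e_n$ and $e_m$, so $\dim E_\lambda \geq 2$. Denoting by $P_\lambda$ the orthogonal projection onto $E_\lambda$, expanding on the orthonormal basis gives $P_\lambda\bigl(p(D)x\bigr) = p(\lambda)\, P_\lambda x$ for every polynomial $p$ and every $x \in \mathcal{H}$. Choose a non-zero $w \in E_\lambda$ with $w \perp P_\lambda x$ (possible since $\dim E_\lambda \geq 2$); then
\[
\langle p(D) x, w\rangle \;=\; p(\lambda)\,\langle P_\lambda x, w\rangle \;=\; 0 \qquad \text{for every polynomial } p,
\]
so $w$ is orthogonal to the cyclic subspace generated by $x$, forcing $x$ to be non-cyclic.

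\medskip

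\textbf{``If''.} Assume now all $s_n$ are pairwise distinct and propose as candidate cyclic vector
\[
x \;=\; \sum_{n\geq 1} c_n\, e_n, \qquad \text{with } c_n \neq 0 \text{ for every } n \text{ and } \sum_{n\geq 1} |c_n|^2 < \infty
\]
(say $c_n = 2^{-n}$, though a faster decay may be needed in the density step below). Let $y = \sum_n y_n e_n \in \mathcal{H}$ satisfy $\langle p(D) x, y\rangle = 0$ for every polynomial $p$, and set $a_n := c_n\,\overline{y_n}$. Cauchy--Schwarz gives $(a_n) \in \ell^1(\N)$, and the orthogonality conditions read
\[
\sum_{n\geq 1} p(s_n)\, a_n \;=\; 0 \qquad \text{for every polynomial } p,
\]
i.e.\ all polynomial moments of the compactly supported atomic complex measure $\nu := \sum_n a_n\, \delta_{s_n}$ vanish.

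\medskip

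\textbf{Main obstacle.} The heart of the argument is to deduce $\nu = 0$ from this. Since the analogous statement for general compactly supported complex measures is false, one must exploit both the discreteness of $\nu$ and the freedom in choosing the weights $(c_n)$. My preferred route is the spectral theorem: $D$ being a bounded normal operator with simple point spectrum and one-dimensional eigenspaces, it has uniform spectral multiplicity one and is therefore unitarily equivalent to multiplication by the independent variable on $L^2(\widetilde\mu)$ for a purely atomic measure $\widetilde\mu$ supported on $\{s_n\}_n$ with weights $|c_n|^2$. Under this equivalence $x$ corresponds to the constant function $1$, and cyclicity reduces to density of polynomials in $L^2(\widetilde\mu)$. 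A more concrete route proceeds through the Cauchy transform $\hat\nu(w) = \sum_n a_n/(w - s_n)$: the vanishing of the polynomial moments forces $\hat\nu \equiv 0$ on the unbounded component of $\C \setminus \overline{\{s_n\}}$, after which one reads off the residues of $\hat\nu$ at each point of $\{s_n\}$ isolated in $\overline{\{s_n\}}$ (using super-exponential decay of $(c_n)$ to tame Lagrange-type interpolation polynomials) and iterates on the successive derived sets in a Cantor--Bendixson fashion to reach the accumulation points. This technical moment--uniqueness step is precisely the content of \cite[Lemma~1]{Seubert}.
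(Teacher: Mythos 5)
The paper offers no proof of this lemma at all: it is imported from the literature, with Halmos cited for the finite-dimensional case and Seubert's Lemma~1 for the general one. Your ``only if'' half is correct and complete: the identity $P_\lambda\bigl(p(D)x\bigr)=p(\lambda)P_\lambda x$ confines the projection of the whole polynomial orbit to the line $\C\cdot P_\lambda x$ inside the at-least-two-dimensional eigenspace $E_\lambda$, so a nonzero $w\in E_\lambda$ orthogonal to $P_\lambda x$ annihilates $\overline{\Span(\Orb(D,x))}$. That part is a genuine proof, which is more than the paper supplies.

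The ``if'' half, however, is not established, and the concrete route you sketch would fail. First, the moment statement you reduce to --- $\sum_n p(s_n)a_n=0$ for all polynomials implies $a_n\equiv 0$ --- is \emph{false} for a generic choice of summable weights: your own spectral-theorem reformulation shows it is equivalent to $P^2(\tilde\mu)=L^2(\tilde\mu)$ for the atomic measure $\tilde\mu=\sum_n|c_n|^2\delta_{s_n}$, and there exist purely atomic compactly supported planar measures for which this fails (take $(s_n)$ a sampling sequence for the Bergman space of $\D$ with weights $w_n\asymp(1-|s_n|)^2$; the restriction map embeds $A^2$ as a \emph{proper} closed subspace of $L^2(\mu)$ containing the polynomials, and any nonzero $g\perp P^2(\mu)$ yields a nonzero $\ell^1$ sequence $a_n=w_n\overline{g(s_n)}$ killing every moment). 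So the entire content of the sufficiency direction is the careful choice of $(c_n)$, which your proposal acknowledges but never carries out; $c_n=2^{-n}$ will in general not work. Second, the Cauchy-transform route stalls exactly in the hard cases: when $\overline{\{s_n\}}$ is perfect (eigenvalues dense in a disc) the Cantor--Bendixson induction never starts, and even an isolated eigenvalue can be shielded from the unbounded component of the complement (e.g.\ $s_1=0$ with the remaining $s_n$ dense in $\T$), so that $\hat\nu\equiv 0$ near infinity gives no information about its residue there. The argument that actually works --- and is the content of the Seubert lemma you and the paper both cite --- avoids moments entirely: choose $c_n>0$ recursively so that, with $L_{n,N}$ the Lagrange polynomial satisfying $L_{n,N}(s_n)=1$ and $L_{n,N}(s_j)=0$ for $j\le N$, $j\ne n$, and $M_N=\max_{n\le N}\sup_{|z|\le\|D\|}|L_{n,N}(z)|$, one has $\sum_{k>N}|c_k|^2\le 4^{-N}M_N^{-2}$; then $\|L_{n,N}(D)x-c_ne_n\|^2=\sum_{k>N}|L_{n,N}(s_k)|^2|c_k|^2\le 4^{-N}\to 0$, so every $e_n$ lies in $\overline{\Span(\Orb(D,x))}$ directly. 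Citing Seubert for this step is legitimate (it is what the paper does), but the intermediate machinery you propose is not a viable substitute for it.
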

\begin{lem}\label{Hilden}
Let $T$ be a normal operator on a Hilbert space $\mathcal H$ of dimension greater than $1$. Then $T$ is not supercyclic.
\end{lem}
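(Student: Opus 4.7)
The plan is to prove this classical Hilden--Wallen-type result by contradiction. Suppose $T$ is normal on $\mathcal{H}$ with $\dim\mathcal{H} > 1$ and admits a supercyclic vector $x$. Via the spectral theorem (which is where the spectral mapping theorem enters), write $T = \int \lambda\, dE(\lambda)$; for every $y \in \mathcal{H}$ the quantities $\|T^k y\|^2 = \int |\lambda|^{2k}\, d\mu_y(\lambda)$, with $d\mu_y(\lambda) = d\langle E(\lambda)y, y\rangle$, form a positive moment sequence. The Cauchy--Schwarz inequality between consecutive moments yields the log-convexity estimate
\[
\|T^{n+1} y\|^2 \leq \|T^n y\| \cdot \|T^{n+2} y\|, \qquad y \in \mathcal{H},\ n\geq 0.
\]
In particular, the ratios $r_n := \|T^{n+1}x\|/\|T^n x\|$ are non-decreasing and bounded above by $\|T\|$, so they converge to some $r \in [0, \|T\|]$.

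The second step transfers this geometric constraint to all of $\mathcal{H}$ via supercyclicity. For any $y \in \mathcal{H}\setminus\{0\}$ not lying in the countable set $\bigcup_{k\geq 0} \mathbb{C} T^k x$, density yields scalars $c_n$ and indices $k_n \to \infty$ with $c_n T^{k_n} x \to y$. Continuity of $T$ gives $c_n T^{k_n+1} x \to Ty$, and comparing norms forces
\[
\frac{\|Ty\|}{\|y\|} = \lim_{n\to\infty} r_{k_n} = r.
\]
Density and continuity promote this to $\|Ty\| = r\|y\|$ for every $y \in \mathcal{H}$, which, combined with normality, forces $T = rU$ for some unitary $U$. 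The case $r=0$ is immediately excluded, since then the orbit reduces to $\mathbb{C}x$, which is not dense in $\mathcal{H}$.

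The main obstacle is the remaining case $T = rU$ with $r > 0$, where the hypothesis $\dim\mathcal{H} > 1$ is genuinely used. Absorbing $r^n$ into the scalar, the supercyclic orbit reduces to $\bigcup_{n\geq 0} \mathbb{C}\, U^n x$. Representing the unitary $U$ via the spectral theorem as multiplication by $e^{i\theta}$ on some $L^2(\mu)$ on the unit circle, with $x$ identified with some $f \in L^2(\mu)$, the orbit becomes $\{c\, e^{in\theta} f : c \in \mathbb{C},\ n \geq 0\}$; every such function has modulus $|c|\cdot|f|$ pointwise $\mu$-a.e. Because $\dim \mathcal{H} > 1$ forces $\mu$ to have support of at least two points, the set $\bigcup_{\alpha \geq 0}\{g \in L^2(\mu) : |g| = \alpha|f|\ \mu\text{-a.e.}\}$ cannot be dense in $L^2(\mu)$, yielding the required contradiction.
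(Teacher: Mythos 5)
Your argument is correct. Note, however, that the paper does not actually prove this lemma: it simply cites Hilden--Wallen \cite{Hilden} and remarks that the proof there ``uses the spectral mapping theorem.'' So there is no internal proof to compare against; what you have written is a self-contained reconstruction of the classical argument, and all three of its stages hold up. The log-convexity $\|T^{n+1}y\|^2\le\|T^ny\|\,\|T^{n+2}y\|$ does follow from Cauchy--Schwarz applied to the scalar spectral measure $\mu_y$; the transfer step is legitimate because each line $\C T^kx$ is closed, so an approximating sequence for a vector $y$ outside the countable union of these lines must have indices $k_n\to\infty$, and the complement of countably many proper closed subspaces is dense by Baire when $\dim\calH>1$; and in the final step, since the supercyclic vector is in particular $*$-cyclic for the unitary $U$, the spectral theorem does give the model $L^2(\mu)$ on the circle with $x\mapsto f$, $f\neq 0$ $\mu$-a.e., after which the estimate $\|g-\chi_Af\|^2\ge(\alpha-1)^2\int_A|f|^2\,d\mu+\alpha^2\int_{A^c}|f|^2\,d\mu\ge ab/(a+b)>0$ (for $|g|=\alpha|f|$ a.e., $a=\int_A|f|^2\,d\mu$, $b=\int_{A^c}|f|^2\,d\mu$) rules out density. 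Two small points you should fold in explicitly: if $T^nx=0$ for some $n$, the ratios $r_n$ are not all defined, but then log-convexity forces $Tx=0$ and the projective orbit is a single line, which is exactly your $r=0$ case; and the passage from ``$T/r$ is a normal isometry'' to ``$U=T/r$ is unitary'' uses $T^*T=r^2I$ together with normality to get $TT^*=r^2I$. Neither is a gap, just a line each of bookkeeping.
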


\begin{prop}\label{cyclicity-and-supercyclicity}
Let $\beta_*\neq\pm\infty$, $b\neq 0$. Then,
\begin{enumerate}
\item  $C_{z+b}$ is cyclic if and only if $(\lambda_n-\lambda_m)b\in\mathbb C\setminus 2\pi\mathbb Z$ whenever $n\neq m$, and
\item $C_{z+b}$ is not supercyclic.
\end{enumerate}
\end{prop}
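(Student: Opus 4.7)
The plan is to exploit that $C_{z+b}$ is a diagonal operator with respect to the orthonormal basis $(q_n)_{n\ge 1}$ of $\Hblam$ from Proposition~\ref{prop:ONB}: as observed in the proof of Proposition~\ref{bdd-a1}, we have $C_{z+b} q_n = e^{-\lambda_n b} q_n$ for every $n\ge 1$. Both assertions then reduce to standard facts about diagonal (hence normal) operators, supplied by Lemmas~\ref{Halmos} and~\ref{Hilden} stated just before the proposition.

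For (1), I would invoke Lemma~\ref{Halmos} directly with $e_n = q_n$ and $s_n = e^{-\lambda_n b}$: $C_{z+b}$ is cyclic if and only if the scalars $e^{-\lambda_n b}$, $n\ge 1$, are pairwise distinct. It then remains to translate the injectivity of $n\longmapsto e^{-\lambda_n b}$ into the stated arithmetic condition. For $n\neq m$, the equality $e^{-\lambda_n b} = e^{-\lambda_m b}$ is equivalent to $e^{(\lambda_m - \lambda_n)b} = 1$, that is $(\lambda_n-\lambda_m)b \in 2\pi i\mathbb Z$; taking the contrapositive yields the required characterization.

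For (2), the key remark is that any diagonal operator with respect to an orthonormal basis is normal: here $C_{z+b}^* q_n = \overline{e^{-\lambda_n b}} q_n$, so $C_{z+b}^* C_{z+b}$ and $C_{z+b}C_{z+b}^*$ both act as multiplication by $|e^{-\lambda_n b}|^2$ on each $q_n$, hence agree on $\Hblam$. Since $(q_n)_{n\ge 1}$ is an infinite orthonormal family, $\Hblam$ has dimension strictly greater than $1$, and Hilden's Lemma~\ref{Hilden} immediately rules out supercyclicity.

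No serious obstacle is anticipated, as both assertions are essentially immediate consequences of the diagonalisation combined with classical Hilbert space operator theory. The only point worth noting concerns the hypothesis $b\neq 0$: when $b = 0$, the operator $C_z$ is the identity, all eigenvalues collapse to $1$, the arithmetic condition in (1) becomes vacuously false and (2) degenerates; excluding this case ensures that the characterization captures a genuine dichotomy. Observe also that the boundedness constraint $\Re(b)\ge 0$ from Proposition~\ref{bdd-a1} plays no further role: the diagonal structure is what drives the entire argument.
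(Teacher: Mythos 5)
Your proof is correct and follows essentially the same route as the paper: diagonalize $C_{z+b}$ on the orthonormal basis $(q_n)$ with eigenvalues $e^{-\lambda_n b}$, then apply Lemma~\ref{Halmos} for cyclicity and normality plus Lemma~\ref{Hilden} for non-supercyclicity. Note only that your (correct) computation shows the eigenvalues coincide exactly when $(\lambda_n-\lambda_m)b\in 2\pi i\mathbb Z$, so the characterization should read $(\lambda_n-\lambda_m)b\notin 2\pi i\mathbb Z$; the factor $i$ appears to be missing from the statement as printed.
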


\begin{proof}
(1)  Recall that the set of probe functions $q_k(z) = \dfrac{1}{\beta_k}e^{-\lam_k z}$ form a basis for $\Hblam$. Note also that for all $k\in\N$,
		\begin{align*}
		(C_{z+b} q_k)(z) = e^{-\lam_k b}q_k(z).
		\end{align*}
That means that $C_{z+b}$ is a diagonal operator with eigenvalue corresponding to $e^{-\lambda_k b}$. Thus (1) follows immediately from Lemma~\ref{Halmos}.

(2) Since $C_{z+b}$ is diagonal, it is in particular normal. So (2) follows from Lemma~\ref{Hilden}.
\end{proof}

\noindent \textbf{- The case $a>1$}.

The situation in this case is more interesting and the behavior of the iterates of $C_{az+b}$ will depend on the following notion.

Given $a\in R(\Lambda)$, $\Lambda=(\lam_n)_{n\geq 1}$, we define an \emph{initial point with respect to $a$} to be a term $\lam_k$ such that no $n < k$ exists such that $a^s\lam_n = \lam_k$ for some $s\in\N$. This is equivalent to say that, for $s\in\mathbb N_0$,
\[
a^s\lambda_n=\lambda_k\Longrightarrow s=0 \mbox{ and }n=k.
\]
Note that if $\lambda_1=0$, then $\lambda_2$ is necessarily an initial point. If $\lambda_1\neq 0$, then $\lambda_1$ is necessarily an initial point. Thus, in all cases, there exists at least one non zero initial point. Furthermore, $(\lam_n)_{n\geq 1}$ has only one non-zero initial point w.r.t. $a$ if and only if non-zero terms of $(\lam_n)_{n\geq 1}$ are in geometric progression with common ratio $a$.

\begin{prop}\label{Ireallydunnowhattouseforlabelsalready}
	Let $\beta_*\neq\pm\infty$, $a>1$, and suppose $(\lam_n)_{n\geq 1}$ has precisely one initial point with respect to $a$. Then, $C_{az+b}$ is cyclic but not supercyclic.
\end{prop}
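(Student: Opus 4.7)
The plan is to first extract the arithmetic structure. The definition of initial point is vacuously satisfied for $\lambda_1$, so $\lambda_1$ is always an initial point; moreover, as noted just before the statement, $\lambda_2$ is an additional initial point whenever $\lambda_1 = 0$. Hence the hypothesis ``precisely one initial point'' forces $\lambda_1 > 0$. Since every $\lambda_n$ must then lie in the $a$-orbit $\{a^s \lambda_1 : s \geq 0\}$ of the unique initial point and $(\lambda_n)$ is strictly increasing, we obtain $\lambda_n = a^{n-1}\lambda_1$ and $m_n = n+1$ for every $n \geq 1$. On the orthonormal basis $(q_n)_{n\geq 1}$ furnished by Proposition~\ref{prop:ONB}, this translates into
\[
C_{az+b}\, q_n = w_n\, q_{n+1}, \qquad w_n := e^{-\lambda_n b}\,\frac{\beta_{n+1}}{\beta_n} \neq 0,
\]
realizing $C_{az+b}$ as a pure unilateral weighted forward shift on $\Hblam$.

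For cyclicity, I would take $q_1$ as the candidate cyclic vector. Iterating gives $C_{az+b}^{k}\, q_1 = (w_1 w_2 \cdots w_k)\, q_{k+1}$, a non-zero scalar multiple of $q_{k+1}$; hence $\Span(\Orb(C_{az+b}, q_1)) = \Span\{q_n : n \geq 1\}$, which is dense in $\Hblam$ since $(q_n)$ is an orthonormal basis.

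For the lack of supercyclicity, I would argue by contradiction: suppose some $f \in \Hblam$ is supercyclic. The shift structure immediately gives $\langle C_{az+b}^{k} f, q_m\rangle = 0$ whenever $1 \leq m \leq k$. I then identify the closure of the projective orbit $\{\mu C_{az+b}^k f : \mu \in \C,\ k \geq 0\}$ with the set
\[
S := \{0\}\cup \bigcup_{k \geq 0} \C\cdot C_{az+b}^k f.
\]
Indeed, for any convergent sequence $\mu_j\, C_{az+b}^{k_j} f \to s$: if $(k_j)$ admits a constant subsequence $k_j \equiv k^*$, then closedness of the one-dimensional subspace $\C\cdot C_{az+b}^{k^*} f$ yields $s \in S$; otherwise $(k_j)$ has a subsequence tending to $\infty$, and then for every fixed $m$ the coefficient $\langle \mu_j C_{az+b}^{k_j}f, q_m\rangle$ vanishes for large $j$, forcing $\langle s, q_m\rangle = 0$ for all $m$, i.e.\ $s = 0 \in S$.

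The main obstacle, and the closing step, is to verify that $S$ cannot exhaust the infinite-dimensional space $\Hblam$. This is a direct application of the Baire category theorem: each $\C\cdot C_{az+b}^k f$ is a proper closed subspace of $\Hblam$ and hence nowhere dense, so $S$ is a countable union of nowhere dense sets, i.e.\ meager. Completeness of $\Hblam$ then implies $S \subsetneq \Hblam$, which contradicts the assumed density of the projective orbit of $f$ and completes the proof.
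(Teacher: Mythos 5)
Your proof is correct. The cyclicity half is essentially the paper's argument: both identify the hypothesis as forcing $\lambda_1>0$ and $\lambda_n=a^{n-1}\lambda_1$ (so $m_n=n+1$), realize $C_{az+b}$ as a weighted forward shift on the basis $(q_n)$, and observe that the orbit of $q_1$ (resp.\ $e^{-\lambda_1 z}$ in the paper) consists of non-zero multiples of all the $q_k$, whence density of its span. (Both you and the paper pass quickly over the fact that strict monotonicity alone gives only $\lambda_n=a^{s_n}\lambda_1$ with $s_n$ increasing; one also needs the standing boundedness assumption, i.e.\ $a\in\mathcal R(\Lambda)$, to rule out gaps and conclude $s_n=n-1$. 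This is harmless.) The non-supercyclicity half, however, is a genuinely different route. The paper argues pointwise and quantitatively: for each candidate $f$ it splits into cases, exhibits an explicit $g$ (e.g.\ $g=e^{-\lambda_1 z}+e^{-\lambda_2 z}$ or $g=\kappa e^{-\lambda_1 z}$), and bounds $\dist\bigl(g,\{\mu y: y\in\Orb(C_{az+b},f),\ \mu\in\C\}\bigr)$ from below by minimizing quadratics in $\mu$. You instead exploit the shift structure globally: since $C_{az+b}^k f$ is supported on $\{q_m : m>k\}$, any norm limit of $\mu_j C_{az+b}^{k_j}f$ with $k_j\to\infty$ has all coefficients zero, so the closure of the projective orbit is contained in $\{0\}\cup\bigcup_{k\geq 0}\C\cdot C_{az+b}^k f$, a countable union of nowhere dense sets, which by Baire cannot be all of $\Hblam$. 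Your argument is shorter, avoids the case analysis, and in fact proves the more general statement that no unilateral weighted forward shift is supercyclic; the paper's explicit construction is more elementary (no category argument) and its case-analysis template is reused in the outline proof of the last proposition of Section 5, which your method would also cover only after accounting for the extra fixed direction $q_1$ when $\lambda_1=0$.
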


\begin{proof}
Since $(\lam_n)_{n\geq 1}$ has precisely one initial point, it must be the case that $\lam_n = \lam_1 a^{n-1}$ for every $n$, and $\lam_1 \neq 0$. Let $f(z) = e^{-\lam_1 z}$. By induction, we easily check that
	\begin{align*}
	(C_{az+b}^k f)(z) = \exp (-(\lam_1 + \lam_2 + \dots + \lam_k)b) e^{-\lam_{k+1} z}.
	\end{align*}
In particular, $\Orb(C_{az+b},f)$ contains the vectors of the basis $(q_k)_{k\geq 1}$ of $\Hblam$. Therefore $C_{az+b}$ is cyclic.
	
	To show $C_{az+b}$ is not supercyclic, we show that for every $f\in\Hblam$ we can find a $g$ such that $\dist(g, \bigcurlybracket{\mu y: y\in\Orb (C_{az+b}, f), \mu\in\C})$ is bounded below by some non-zero constant. There are two cases.
	\begin{itemize}
		\item Case 1: $f(z) = a_1e^{-\lam_1 z}$ for some $0\neq a_1\in\C$. In this case, consider the function $g(z) = e^{-\lam_1 z} + e^{-\lam_2 z}$. It is then easy to see that
		\begin{align*}
		\|\mu f - g\|^2 &= |\mu a_1 - 1|^2 \beta_1^2 + \beta_2^2 \geq \beta_2^2,\\
		\|\mu C_{az+b} f - g\|^2 &= \beta_1^2 + |\mu a_1 e^{-\lam_1 b} - 1|^2 \beta_2^2 \geq \beta_1^2,\\
		\|\mu C_{az+b}^k f - g\|^2 &\geq \beta_1^2 + \beta_2^2,\qquad \forall k\geq 2.
		\end{align*}
		Therefore $\dist(g, \bigcurlybracket{\mu y: y\in\Orb (C_{az+b}, f), \mu\in\C}) \geq \min\bigcurlybracket{\beta_1, \beta_2}$.

		\item Case 2: $f(z) \neq a_1e^{-\lam_1 z}$ for any $a_1\in\C$.
		
		Write $f(z) = \dsum_{n=1}^\infty a_n e^{-\lam_n z}$. Choose $\kappa \neq 0, -a_1$ and pick $g(z) = \kappa e^{-\lam_1 z}$. Note that for all $k\geq 1$ we have that the coefficient of $e^{-\lam_1 z}$ in the representation of $C_{az+b}^k f$ is $0$, it follows then that for each $k\geq 1$, we have $\|\mu C_{az+b}^k f - g\|^2 \geq |\kappa|^2\beta_1^2$.
		
		It remains to consider $k=0$. We have
		\begin{align*}
		(\mu f - g)(z) &= \mu \dsum_{n\geq 1} a_n e^{-\lam_n z} - \kappa e^{-\lam_1 z}\\
		&= (\mu a_1 - \kappa) e^{-\lam_1 z} + \dsum_{n\geq 2} \mu a_n e^{-\lam_n z}.
		\end{align*}
		Since $f(z) \neq a_1 e^{-\lam_1 z}$ for any $a_1$, there exists $k\geq 2$ such that $a_k \neq 0$. We have
		\begin{align*}
		\|\mu f - g\|^2 &\geq |\mu a_1 - \kappa|^2\beta_1^2 + |\mu a_k|^2 \beta_k^2.
		\end{align*}
		By minimizing over $\Re(\mu)$ and noting that the minimum remains unchanged if we replaced $\kappa$ and $a_1$ with their moduli, the RHS satisfies
		\begin{align*}
		|\mu a_1 - \kappa|^2\beta_1^2 + |\mu a_k|^2 \beta_k^2 \geq \dfrac{|\kappa a_k|^2 \beta_1^2\beta_k^2}{|a_1|^2\beta_1^2+|a_k|^2\beta_k^2} =: D,
		\end{align*}
		where the inequality follows because the expression $ax^2 + b(x-c)^2$ where $a,b,c\in\R$, $a, b > 0$, has minimum $\frac{abc^2}{a+b}$ as $x$ ranges over $\R$.
		
		Thus $\dist(g, \bigcurlybracket{\mu y: y\in\Orb (C_\vphi, f), \mu\in\C}) \geq \min\{|\kappa|\beta_1, \sqrt{D}\}$.
	\end{itemize}
\end{proof}

\begin{prop}\label{ughhhh}
	Let $\beta_*\neq\pm\infty$, $a>1$, and suppose $(\lam_n)_{n\geq 1}$ has at least two non-zero initial points with respect to $a$. Then, $C_{az+b}$ is not cyclic.
\end{prop}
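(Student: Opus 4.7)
The plan is to show that $\ker C_{az+b}^*$ has dimension at least two, and then invoke the standard fact that for any cyclic operator $T$ on a Hilbert space every eigenspace of $T^*$ is at most one-dimensional; the contrapositive then immediately yields non-cyclicity of $C_{az+b}$.

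Let $\lambda_{k_1}$ and $\lambda_{k_2}$ be two distinct non-zero initial points of $(\lambda_n)$ with respect to $a$. I will show that the orthonormal vectors $q_{k_1}, q_{k_2}$ both lie in $\ker C_{az+b}^*$. By Proposition~\ref{adj-a0}, the $n$-th coefficient in the Dirichlet expansion of $C_{az+b}^* q_{k_j}$ is a non-zero multiple of the $m_n$-th coefficient of $q_{k_j}$, so it vanishes unless $m_n=k_j$, i.e.\ $a\lambda_n=\lambda_{k_j}$. But $a>1$ and the $\lambda_n$ are positive, so such an $n$ would satisfy $\lambda_n<\lambda_{k_j}$, hence $n<k_j$, and then $a^1\lambda_n=\lambda_{k_j}$ with $n<k_j$ directly contradicts the assumption that $\lambda_{k_j}$ is an initial point. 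Thus $C_{az+b}^* q_{k_j}=0$ for $j=1,2$, and $\ker C_{az+b}^*$ is at least two-dimensional.

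To finish, I would briefly recall the standard dualization argument underlying the cyclicity lemma. If $T$ were cyclic with cyclic vector $f$ and $v_1, v_2$ were linearly independent eigenvectors of $T^*$ at a common eigenvalue $\lambda$, then for every polynomial $p$ the identity $\langle p(T)f, v_j\rangle = p(\bar\lambda)\langle f, v_j\rangle$ shows that the continuous linear map $g\mapsto (\langle g,v_1\rangle, \langle g,v_2\rangle)$ sends $\Span \Orb(T,f)$ into the one-dimensional subspace of $\C^2$ generated by $(\langle f,v_1\rangle, \langle f,v_2\rangle)$, which is non-zero by cyclicity of $f$. Passing to closures contradicts the surjectivity of this map on $\Hblam$. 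The main obstacle is the mild bookkeeping in the kernel computation, namely that $a>1$ genuinely rules out the existence of indices $n$ with $a\lambda_n=\lambda_{k_j}$; everything else follows directly from the adjoint formula already at hand.
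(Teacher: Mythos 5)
Your proof is correct, but it takes a genuinely different route from the paper's. The paper argues directly on orbits: it first shows that any cyclic vector $f$ must have non-zero coefficients $a_p,a_q$ at the two non-zero initial points, observes that every orbit element $C_{az+b}^k f$ with $k\geq 1$ has vanishing coefficients at $e^{-\lam_p z}$ and $e^{-\lam_q z}$ (since $a^k\lam_n\neq\lam_p,\lam_q$), and then exhibits the explicit target $g=a_pe^{-\lam_p z}+2a_qe^{-\lam_q z}$ whose distance to $\Span(\Orb(C_{az+b},f))$ is bounded below by an elementary quadratic minimization. You instead show that the probe functions $q_{k_1},q_{k_2}$ at the two initial points lie in $\ker C_{az+b}^*$ --- your bookkeeping here is sound: $m_n=k_j$ would force $a\lam_n=\lam_{k_j}$ with $\lam_n>0$ (the case $\lam_n=0$ being excluded since $\lam_{k_j}\neq 0$), hence $n<k_j$, contradicting the definition of an initial point --- and then invoke the standard duality fact that a cyclic operator has all eigenspaces of its adjoint at most one-dimensional. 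Both arguments rest on the same structural observation (initial points are never of the form $\lam_{m_n}$), but yours is shorter, reuses the adjoint formula of Proposition~\ref{adj-a0} in the same spirit as Proposition~\ref{adj-moreresults}(2), and scales immediately (with $k$ non-zero initial points one gets $\dim\ker C_{az+b}^*\geq k$), whereas the paper's is self-contained and avoids appealing to the eigenspace criterion.
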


\begin{proof}
	Let $\lam_p, \lam_q$ denote two non-zero initial points. Suppose for the sake of contradiction that $C_{az+b}$ is cyclic. Let $f(z) = \dsum_{n=1}^\infty a_ne^{-\lam_n z}$ be a cyclic vector for $C_{az+b}$.
	
	We claim that $a_p, a_q\neq 0$. Assume to the contrary that one of them is zero, say WLOG $a_p = 0$. Recall that $\Orb(C_{az+b},f) = \bigcurlybracket{f, C_{az+b} f, C_{az+b}^2 f, \dots}$. Note that
	\begin{align*}
	(C_{az+b}^k f)(z) = \dsum_{n=1}^\infty a_n \exp\bigbracket{-b\frac{1-a^k}{1-a} }\exp\bigbracket{-a^k \lam_{n} z}.
	\end{align*}
	Since $\lam_p$ is a non-zero initial point, $a^k\lambda_n\neq \lambda_p$ for all $k,n\geq 1$. It follows that the coefficient of $\lam_p$ in any function in $\Orb(C_{az+b},f)$ is always $0$. Hence for any $f_1\in \Span(\Orb(C_{az+b},f))$, one has
	\begin{align*}
	\|f_1-q_p\|^2 \geq 1.
	\end{align*}
	Therefore $\Span(\Orb(C_{az+b},f))$ cannot be dense in $\Hblam$, a contradiction, completing the proof of the claim.
	
	Hence $a_p, a_q\neq 0$. Since $a^k\lambda_n\neq \lambda_p,\lambda_q$ for all $k,n\geq 1$, the only function in $\Orb(C_{az+b},f)$ with non-zero coefficients for $e^{-\lam_p z}$ and $e^{-\lam_q z}$ terms in $\Span(\Orb(C_{az+b},f))$ is $f$ itself.
	
	Consider now the function $g(z) = a_p e^{-\lam_p z} + 2a_q e^{-\lam_q z} \in\Hblam$. Let $F\in \Span(\Orb(C_{az+b},f))$ and fix a representation of $F$ in elements of $\Orb(C_{az+b},f)$. Let $w$ the coefficient of $f$ in this representation. Then we have
	\begin{align*}
	\|F-g\|^2 &\geq |w-1|^2|a_p|^2\beta_p^2 + |w-2|^2|a_q|^2\beta_q^2.
	\end{align*}
	Let $\xi = |a_p|^2\beta_p^2$ and $\eta = |a_q|^2\beta_q^2$. Write $w = w'+1$. Then,
	\begin{align*}
	\|F-g\|^2 &\geq \xi|w'|^2 + \eta|w'-1|^2\\
	&\geq \xi\Re(w')^2 + \eta(\Re(w') - 1)^2 \geq \dfrac{\xi\eta}{\xi + \eta},
	\end{align*}
	where the last inequality is minimized in the same manner as in Proposition~\ref{Ireallydunnowhattouseforlabelsalready}.
	
	It follows that $\Span(\Orb(C_{az+b},f))$ cannot be dense in $\Hblam$. Contradiction.
\end{proof}

It remains to study the case when $(\lam_n)_{n\geq 1}$ has precisely one zero and one non-zero initial point with respect to $a$. In other words, $\lambda_1=0$ and $\lambda_n=\lambda_2 a^{n-2}$, $n\geq 2$. In that case, we will give a sufficient condition to ensure that $C_{az+b}$ is cyclic. Since cyclicity is invariant under unitary transformation, we will work for convenience with a transcription of our problem in the Hardy space of the unit disc $H^2$. Recall that
\begin{align*}
H^2 = \bigcurlybracket{f(z) = \dsum_{k=0}^\infty a_k z^k \in\Hol(\D): f(z) = \dsum_{k=0}^\infty |a_k|^2 < \infty}.
\end{align*}
The Hardy space has the canonical orthonormal basis $\{1,z,z^2,\dots\}$ and for the remainder of this section we will denote by $e_n$ the vectors of this basis, that is $e_n(z) = z^n$ for all $n\geq 0$.

\begin{prop}
	Let $\beta_*\neq\pm\infty$, $a>1$, and suppose $(\lam_n)$ has precisely one zero and one non-zero initial point with respect to $a$. Then the matrix $M$ of $C_{az+b}$ with respect to the orthonormal basis $(q_k)$ is
	\begin{align*}
	M = \bigbracket{\begin{matrix}
		1 & 0 & 0 & \dots\\
		0 & 0 & 0 & \dots\\
		0 & r_2(a,b) & 0 & \dots\\
		0 & 0 & r_3(a,b) & \dots\\
		\vdots & \vdots & \vdots & \ddots
		\end{matrix}}.
	\end{align*}
\end{prop}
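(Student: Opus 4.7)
The plan is to verify directly, column by column, the action of $C_{az+b}$ on the orthonormal basis $(q_k)_{k\geq 1}$. Recall that $q_k(z)=\frac{1}{\beta_k}e^{-\lambda_k z}$, so $(C_{az+b}q_k)(z)=\frac{1}{\beta_k}e^{-\lambda_k b}e^{-a\lambda_k z}$. The whole content of the statement is that, under the hypothesis on the initial points, $a\lambda_k$ again belongs to the sequence $(\lambda_n)_{n\geq 1}$, and more precisely equals $\lambda_{k+1}$ for every $k$, so that each $q_k$ gets sent to a scalar multiple of a single basis vector $q_{k+1}$ (or back to $q_1$ when $\lambda_k=0$).

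The key arithmetic observation is extracted from the hypothesis: because there is exactly one zero initial point ($\lambda_1=0$) and one non-zero initial point (which must be $\lambda_2$), the non-zero terms of $\Lambda$ form the geometric progression $\lambda_n=\lambda_2 a^{n-2}$ for $n\geq 2$. First I would handle $k=1$: since $\lambda_1=0$ we have $q_1=\frac{1}{\beta_1}$, a constant, so $C_{az+b}q_1=q_1$, which produces the first column $(1,0,0,\dots)^T$. Next, for $k\geq 2$, I would compute $a\lambda_k=a\cdot \lambda_2 a^{k-2}=\lambda_2 a^{k-1}=\lambda_{k+1}$, so $m_k=k+1$ and
\[
C_{az+b}q_k=\frac{1}{\beta_k}e^{-\lambda_k b}e^{-\lambda_{k+1}z}=\frac{\beta_{k+1}}{\beta_k}e^{-\lambda_k b}\,q_{k+1}=r_k(a,b)\,q_{k+1}.
\]
This gives the $k$-th column of $M$ as $r_k(a,b)$ in row $k+1$ and $0$ elsewhere, matching the displayed matrix.

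There is essentially no obstacle here: the argument is a direct calculation once the geometric structure of $\Lambda$ is identified. The only thing worth pointing out is a minor notational remark that $r_k(a,\cdot)$, originally introduced for real argument $\Re(b)$, is being evaluated at the complex number $b$; since the defining formula $r_k(a,x)=e^{-\lambda_k x}\beta_{m_k}/\beta_k$ extends verbatim to complex $x$, this causes no issue. The proof therefore reduces to writing out the two cases $k=1$ and $k\geq 2$ as above.
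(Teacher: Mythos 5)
Your computation is correct and is exactly the argument the paper intends: the paper states this proposition without proof, as an immediate consequence of the preceding observation that one zero and one non-zero initial point force $\lambda_1=0$ and $\lambda_n=\lambda_2a^{n-2}$ for $n\geq 2$, together with the formula $C_{az+b}q_k=\frac{\beta_{m_k}}{\beta_k}e^{-\lambda_k b}q_{m_k}$ with $m_1=1$ and $m_k=k+1$. Your remark about $r_k(a,\cdot)$ being evaluated at the complex argument $b$ rather than $\Re(b)$ is also the right reading of the paper's notation here.
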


Now, it is clear that our problem is a particular case of the following more general problem.
\begin{ques}
	Let $\alpha = (\alpha_n)$ be a bounded sequence of complex numbers and suppose that $T:H^2\to H^2$ is the linear map whose matrix, with respect to the orthonormal basis $\{e_0,e_1,e_2,\dots\}$, is given by
	\begin{align*}
	\bigbracket{\begin{matrix}
		1 & 0 & 0 & \dots\\
		0 & 0 & 0 & \dots\\
		0 & \alpha_1 & 0 & \dots\\
		0 & 0 & \alpha_2 & \dots\\
		\vdots & \vdots & \vdots & \ddots
		\end{matrix}}
	\end{align*}
Is $T$ cyclic?
\end{ques}

Note that since $(\alpha_n)$ is supposed to be bounded, it is easy to see that $T$ defines a bounded operator on $H^2$. It turns out that $T$ is linked with a weighted shift operator. In the following, we denote by $H_0^2 = \ol{\Span\{e_1,e_2,\dots\}}$.

\begin{defn}
	Let $\alpha = (\alpha_n)_{n=1}^\infty$ be a bounded sequence. We define the weighted forward shift operator $S_\alpha$ on $H^2_0$ by $S_\alpha e_n = \alpha_n e_{n+1}$, $n\geq 1$.
\end{defn}
Note that we have $T=I\oplus S_\alpha: H^2=\mathbb Ce_0\oplus H_0^2\to H^2=\mathbb Ce_0\oplus H_0^2$. In other words,
\[
T(\gamma e_0\oplus x)=\gamma e_0\oplus S_\alpha x\qquad (\gamma,x)\in \mathbb C\times H_0^2.
\]
By induction and basic computation we have the following two lemmas.

\begin{lem}\label{polylemoops}
	Let $p\in\C[X]$. Then, $(p(T))(\gamma e_0 \oplus x) = \gamma p(1)e_0 \oplus p(S_\alpha)x$.
	\end{lem}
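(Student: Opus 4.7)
The plan is to prove Lemma~\ref{polylemoops} by a two-step routine argument: first establish the formula for monomials $p(X)=X^n$ by induction on $n$, then extend to arbitrary polynomials by linearity. Since $T$ splits as the direct sum $I\oplus S_\alpha$, iterates of $T$ split as direct sums of iterates of the two summands, and the rest is bookkeeping.

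More precisely, I would first verify by induction on $n\ge 0$ that
\[
T^n(\gamma e_0\oplus x)=\gamma e_0\oplus S_\alpha^n x.
\]
The base case $n=0$ is immediate since $T^0=I$ on $H^2$ and $S_\alpha^0=I$ on $H_0^2$. For the induction step, assume the formula holds for $n$ and compute
\[
T^{n+1}(\gamma e_0\oplus x)=T\bigl(\gamma e_0\oplus S_\alpha^n x\bigr)=\gamma e_0\oplus S_\alpha^{n+1}x,
\]
using the definition of $T$ given just above the lemma. Note that $1^n=1$ appears implicitly, which is why the constant $\gamma$ survives unchanged.

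Next, write a general polynomial as $p(X)=\sum_{k=0}^d c_k X^k$. By linearity of the map $f\mapsto f(\gamma e_0\oplus x)$ from $\mathcal L(H^2)$ into $H^2$, we have
\[
p(T)(\gamma e_0\oplus x)=\sum_{k=0}^d c_k\,T^k(\gamma e_0\oplus x)=\sum_{k=0}^d c_k\bigl(\gamma e_0\oplus S_\alpha^k x\bigr).
\]
Collecting the two components of the direct sum yields
\[
p(T)(\gamma e_0\oplus x)=\gamma\Bigl(\sum_{k=0}^d c_k\Bigr)e_0\oplus\Bigl(\sum_{k=0}^d c_k S_\alpha^k\Bigr)x=\gamma p(1)e_0\oplus p(S_\alpha)x,
\]
which is the claimed identity.

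There is no real obstacle here: the only mild subtlety is making sure one uses $1^k=1$ (so that the first coordinate evaluates the polynomial at $1$) and that the direct sum decomposition $H^2=\mathbb C e_0\oplus H_0^2$ is respected at each step. Both are immediate from the definition of $T$.
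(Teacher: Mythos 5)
Your proof is correct and follows exactly the route the paper intends: the paper gives no written proof, stating only that the lemma follows ``by induction and basic computation,'' which is precisely your induction on monomials followed by linearity. Nothing is missing.
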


\begin{lem}\label{lem=psalpha}
	Let $p(z) = \dsum_{j=0}^{d} a_j z^j$ be a polynomial. Let $\omega_j := \alpha_1\alpha_2\dots \alpha_j$ for all $j\geq 1$. Then,
	\begin{align*}
	p(S_\alpha) e_1 = a_0e_1 + \dsum_{j=1}^{d} a_j \omega_j e_{j+1}.
	\end{align*}
\end{lem}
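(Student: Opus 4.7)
The plan is to reduce the statement to a direct computation of the iterates $S_\alpha^j e_1$, followed by a use of linearity of the polynomial calculus. There is no real obstacle here; the only thing to check carefully is the product formula for the iterates, which I would establish by a short induction.

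First, I would show by induction on $j \geq 1$ that
\[
S_\alpha^j e_1 = \omega_j\, e_{j+1}.
\]
The base case $j=1$ is just the definition $S_\alpha e_1 = \alpha_1 e_2 = \omega_1 e_2$. For the inductive step, assuming $S_\alpha^j e_1 = \omega_j e_{j+1}$, we apply $S_\alpha$ once more and use $S_\alpha e_{j+1} = \alpha_{j+1} e_{j+2}$ to conclude
\[
S_\alpha^{j+1} e_1 = \omega_j \cdot \alpha_{j+1}\, e_{j+2} = \omega_{j+1}\, e_{j+2}.
\]

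Next, I would write $p(S_\alpha) = \sum_{j=0}^{d} a_j S_\alpha^j$ by the standard functional calculus for polynomials (interpreting $S_\alpha^0$ as the identity on $H^2_0$). Applying this operator to $e_1$ and using linearity gives
\[
p(S_\alpha) e_1 = a_0\, e_1 + \sum_{j=1}^{d} a_j\, S_\alpha^j e_1.
\]
Substituting the formula $S_\alpha^j e_1 = \omega_j e_{j+1}$ established in the first step into the sum yields exactly
\[
p(S_\alpha) e_1 = a_0 e_1 + \sum_{j=1}^{d} a_j \omega_j\, e_{j+1},
\]
as claimed. The whole argument is a two-line induction together with linearity, so I expect no genuine difficulty; the slight care required is merely to treat the $j=0$ term separately since it is the only one producing an $e_1$ component.
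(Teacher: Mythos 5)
Your proof is correct and matches the paper's intent exactly: the paper dispatches this lemma with the single remark ``By induction and basic computation,'' and your induction establishing $S_\alpha^j e_1 = \omega_j e_{j+1}$ followed by linearity is precisely that computation. Nothing is missing.
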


The next lemma provides an equivalence we use in our investigation.

\begin{lem}\label{kappapielemma}
	The following are equivalent.
	\begin{enumerate}
		\item $T$ is cyclic.
		\item There exists $\kappa \in\C$ and $x\in H_0^2$ such that for all $\eps > 0$, $\mu\in\C$, and $y\in H_0^2$, there exists $p\in\C[X]$ such that
		\begin{align*}
		|\kappa p(1) - \mu| < \eps \qquad \text{and}\qquad \|p(S_\alpha)x - y\| < \eps.
		\end{align*}
	\end{enumerate}
\end{lem}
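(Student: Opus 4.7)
The plan is to simply unpack the definition of cyclicity for $T$ using the orthogonal decomposition $H^2 = \mathbb{C} e_0 \oplus H_0^2$ together with Lemma~\ref{polylemoops}. Since $e_0 \perp H_0^2$, for any $\gamma\in\mathbb C$ and $z\in H_0^2$ we have $\|\gamma e_0 \oplus z\|^2 = |\gamma|^2 + \|z\|^2$, so that
\[
\max\bigbracket{|\gamma|,\|z\|} \leq \|\gamma e_0\oplus z\| \leq \sqrt{2}\,\max\bigbracket{|\gamma|,\|z\|}.
\]
This equivalence of the componentwise metric with the Hilbert norm is the only analytic fact needed; everything else is formal.

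For the direction (1)$\Rightarrow$(2), assume $T$ is cyclic with cyclic vector $v\in H^2$, and decompose $v = \kappa e_0 \oplus x$ with $\kappa\in\mathbb C$ and $x\in H_0^2$. Pick an arbitrary $\eps > 0$, $\mu\in\mathbb C$ and $y\in H_0^2$, and set $w = \mu e_0\oplus y$. Since the polynomials in $T$ applied to $v$ span a dense subspace of $H^2$, there exists $p\in\mathbb C[X]$ with $\|p(T)v - w\|<\eps$. By Lemma~\ref{polylemoops}, $p(T)v = \kappa p(1) e_0 \oplus p(S_\alpha)x$, hence
\[
\|p(T)v - w\|^2 = |\kappa p(1)-\mu|^2 + \|p(S_\alpha)x - y\|^2 < \eps^2,
\]
which yields $|\kappa p(1)-\mu|<\eps$ and $\|p(S_\alpha)x - y\|<\eps$ simultaneously.

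For the converse (2)$\Rightarrow$(1), given $\kappa$ and $x$ as in the statement, set $v = \kappa e_0 \oplus x\in H^2$; we show $v$ is cyclic for $T$. Fix an arbitrary $w\in H^2$ and $\eps>0$, and write $w = \mu e_0 \oplus y$ with $\mu\in\mathbb C$, $y\in H_0^2$. Apply the hypothesis with $\eps' = \eps/\sqrt{2}$ in place of $\eps$ to obtain $p\in\mathbb C[X]$ with $|\kappa p(1)-\mu|<\eps/\sqrt{2}$ and $\|p(S_\alpha)x - y\|<\eps/\sqrt{2}$. Using Lemma~\ref{polylemoops} once more,
\[
\|p(T)v - w\|^2 = |\kappa p(1)-\mu|^2 + \|p(S_\alpha)x - y\|^2 < \eps^2.
\]
Since $p(T)v\in \Span(\Orb(T,v))$, this shows that $\overline{\Span(\Orb(T,v))} = H^2$, so $T$ is cyclic.

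There is essentially no obstacle: both implications are direct translations via the direct-sum norm and Lemma~\ref{polylemoops}. The only point requiring a minimum of care is that in (2)$\Rightarrow$(1) one must approximate each component to within $\eps/\sqrt{2}$ rather than $\eps$ in order to control the full Hilbert-space norm by $\eps$.
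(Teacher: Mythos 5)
Your proof is correct and follows essentially the same route as the paper's: both unpack cyclicity via the decomposition $H^2=\mathbb{C}e_0\oplus H_0^2$ and Lemma~\ref{polylemoops}, using that $\|\gamma e_0\oplus z\|^2=|\gamma|^2+\|z\|^2$. The paper simply absorbs your $\eps/\sqrt{2}$ bookkeeping into the remark that $\eps$ is arbitrary.
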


\begin{proof}
	By definition, $T$ is cyclic if and only if there exists $\kappa \in\C$ and $x\in H_0^2$ such that for all $\eps > 0$, $\mu\in\C$, and $y\in H_0^2$, there exists $p\in\C[X]$ such that
	\begin{align*}
	\|p(T)(\kappa e_0 \oplus x) - (\mu e_0 \oplus y)\|^2 < \eps.
	\end{align*}
	
	By Lemma \ref{polylemoops} and by virtue of being direct sums, we have
	\begin{align*}
	\|p(T)(\kappa e_0 \oplus x) - (\mu e_0 \oplus y)\|^2 &= \|(\kappa p(1) e_0 \oplus p(S_\alpha)x) - (\mu e_0 \oplus y)\|^2\\
	&= \|(\kappa p(1) - \mu)e_0 \oplus (p(S_\alpha)x - y)\|^2\\
	&= |\kappa p(1) - \mu|^2 + \|p(S_\alpha)x - y\|^2.
	\end{align*}
	Hence the condition in the definition can be rewritten as
	\begin{align*}
	|\kappa p(1) - \mu|^2 + \|p(S_\alpha)x - y\|^2 < \eps.
	\end{align*}
	Since $\eps > 0$ is an arbitrary variable, this completes the proof.
\end{proof}

With this lemma, the question becomes one of whether such a polynomial $p\in\C[X]$ can always be chosen so that the second statement of Lemma \ref{kappapielemma} holds. To that end, we have the following result.

\begin{prop}\label{eeeeesh}
Let $(\alpha_j)$ be a bounded sequence of complex numbers, let $\omega_j = \alpha_1\alpha_2\dots\alpha_j$ for each $j\geq 1$, and suppose that
\[
\sum_{j=1}^\infty \frac{1}{|w_j|^2}=\infty.
\]
Let $\nu\in\C$ be given.
Then, there exists a sequence of non-constant polynomials $(u_k)\in\C[X]$ such that
	\begin{align*}
	u_k(1) = \nu \qquad\text{and}\qquad \|u_k(S_\alpha)e_1\|\to 0.
	\end{align*}
\end{prop}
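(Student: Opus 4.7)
The plan is to reduce the question to an $\ell^2$-minimization with a linear constraint via Lemma \ref{lem=psalpha}, and then exhibit extremal polynomials coming from the Cauchy--Schwarz inequality, whose minimal values tend to zero thanks to the divergence hypothesis $\sum 1/|\omega_j|^2 = \infty$.

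Writing a candidate polynomial as $u(z)=\sum_{j=0}^{d}a_j z^j$, Lemma~\ref{lem=psalpha} gives
\[
\|u(S_\alpha)e_1\|^2 \;=\; |a_0|^2 + \sum_{j=1}^d |a_j|^2 |\omega_j|^2,
\]
while the constraint $u(1)=\nu$ reads $a_0+\sum_{j=1}^d a_j=\nu$. Expressing $\nu = a_0\cdot 1 + \sum_{j=1}^d (a_j|\omega_j|)\cdot (1/|\omega_j|)$ and applying Cauchy--Schwarz yields
\[
|\nu|^2 \;\le\; \Bigl(1+\sum_{j=1}^d \tfrac{1}{|\omega_j|^2}\Bigr)\,\|u(S_\alpha)e_1\|^2,
\]
and the right-hand denominator tends to infinity by hypothesis. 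Thus there is room to make the norm as small as we please; it remains to produce a genuine (non-constant) polynomial attaining this bound.

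For $\nu\neq 0$, the equality case of Cauchy--Schwarz suggests the explicit choice
\[
u_k(z) \;=\; \frac{\nu}{\Sigma_k}\Bigl(1 + \sum_{j=1}^{d_k}\frac{z^j}{|\omega_j|^2}\Bigr),\qquad \Sigma_k:=1+\sum_{j=1}^{d_k}\frac{1}{|\omega_j|^2},
\]
with $d_k\ge 1$ arbitrary. A direct computation using Lemma~\ref{lem=psalpha} will give $u_k(1)=\nu$ and $\|u_k(S_\alpha)e_1\|^2=|\nu|^2/\Sigma_k$; since the leading coefficient $\nu/(\Sigma_k|\omega_{d_k}|^2)$ is non-zero, $u_k$ is a polynomial of degree exactly $d_k\ge 1$, hence non-constant. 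Letting $d_k\to\infty$ makes $\Sigma_k\to\infty$, which forces $\|u_k(S_\alpha)e_1\|\to 0$.

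The only subtlety is that for $\nu=0$ the Cauchy--Schwarz extremal collapses to the zero polynomial, which is constant; I would therefore handle $\nu=0$ by hand, for instance with $u_k(z)=(z-1)/k$. This is of degree $1$, satisfies $u_k(1)=0$, and by Lemma~\ref{lem=psalpha} gives $\|u_k(S_\alpha)e_1\|^2=(1+|\alpha_1|^2)/k^2\to 0$. Apart from this degeneracy, the proof is a single application of Cauchy--Schwarz combined with the divergence assumption, so no real obstacle is expected.
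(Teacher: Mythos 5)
Your proof is correct, but it takes a genuinely different route from the paper's. You treat the problem as an exact $\ell^2$-minimization: by Lemma~\ref{lem=psalpha}, $\|u(S_\alpha)e_1\|^2=|a_0|^2+\sum_{j=1}^d|a_j|^2|\omega_j|^2$ subject to the linear constraint $a_0+\sum_j a_j=\nu$, and the Cauchy--Schwarz extremal $a_0=\nu/\Sigma_k$, $a_j=\nu/(\Sigma_k|\omega_j|^2)$ attains the optimal value $|\nu|^2/\Sigma_k\to 0$. The paper instead writes $u_k(z)=\nu+(z-1)p_k(z)$ (so the constraint $u_k(1)=\nu$ is built in) and chooses the coefficients of $p_k$ by a telescoping recursion $a_{j,k}=a_{j-1,k}-\eps_k/(h_j|\omega_j|)$; this requires first invoking the classical fact that $\sum|\omega_j|^{-2}=\infty$ yields an auxiliary positive sequence $(h_j)$ with $\sum 1/(h_j|\omega_j|)=\infty$ and $\sum 1/h_j^2<\infty$. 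Your argument dispenses with that auxiliary lemma entirely and in addition identifies the exact minimum of $\|u(S_\alpha)e_1\|$ over polynomials of degree at most $d$ with $u(1)=\nu$, which is sharper information than the paper extracts; the paper's telescoping form $\nu+(z-1)p_k$ has the mild advantage of handling the normalization uniformly in $\nu$ (it reduces to $\nu\ge 0$ by rotation and treats $\nu=0$ by the same recursion), whereas you must, and correctly do, treat $\nu=0$ separately since the Cauchy--Schwarz extremal degenerates to the zero polynomial there. Both arguments implicitly use $\omega_j\neq 0$ for all $j$, which is forced by the hypothesis $\sum_j|\omega_j|^{-2}=\infty$ being a convergent-or-divergent series of finite terms and is satisfied in the application. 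All the computations you assert check out against Lemma~\ref{lem=psalpha}.
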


\begin{proof}
Without loss of generality, we can assume that $\nu$ is a non-negative real number.
	For a general $u_k\in\C[X]$ we write $u_k(z) = \dsum_{j=0}^{d_k} b_{j,k} z^j$. The question is to define appropriate choices of $b_{j,k}$ that fulfill the conditions of the lemma.
	
	We find $u_k(z)$ with the form $u_k(z) = \nu + (z-1)p_k(z)$, where the $p_k$'s are polynomials. Note that such a $u_k$ will always have $u_k(1) = \nu$. Write $p_k(z) = \dsum_{j=0}^{s_k} a_{j,k} z^j$. Then we have
	\begin{align*}
	u_k(z) &= \nu + \dsum_{j=0}^{s_k} a_{j,k} z^{j+1} - \dsum_{j=0}^{s_k}a_{j,k} z^j\\
	&= (\nu - a_{0,k}) + \dsum_{j=1}^{s_k} (a_{j-1,k} - a_{j,k}) z^j + a_{s_k, k} z^{s_k + 1}.
	\end{align*}
	This gives by Lemma~\ref{lem=psalpha}
	\begin{align*}
	u_k(S_\alpha)e_1 = (\nu - a_{0,k})e_1 + \dsum_{j=1}^{s_k} (a_{j-1,k} - a_{j,k}) \omega_j e_{j+1} + a_{s_k, k} \omega_{s_k + 1} e_{s_k+2},
	\end{align*}
	and so
	\begin{align*}
	\|u_k(S_\alpha)e_1\|^2 = |\nu - a_{0,k}|^2 + \dsum_{j=1}^{s_k} |a_{j-1,k} - a_{j,k}|^2|\omega_j|^2 + |a_{s_k, k}|^2|\omega_{s_k+1}|^2.
	\end{align*}
Since $\dsum_{n=1}^{\infty}|w_j|^{-2}=\infty$, it is well known that we can find a sequence of positive real numbers $(h_j)_{j\geq 1}$ such that
\[
\sum_{n=1}^\infty \frac{1}{h_j|w_j|}=\infty\quad\mbox{and}\quad \sum_{n=1}^\infty \frac{1}{h_j^2}<\infty.
\]

	Assume $\nu>0$. Pick $a_{0,k} = \nu$. Define $\eps_k>0$ such that
	\begin{align*}
	\eps_k \bigbracket{\dfrac{1}{h_1|\omega_1|} + \dfrac{1}{h_2|\omega_2|} + \dots + \dfrac{1}{h_{s_k}|\omega_{s_k}|}} = \nu,
	\end{align*}
	and define recursively
	\begin{align*}
	a_{j,k} = a_{j-1,k} - \dfrac{\eps_k}{h_j|\omega_j|},\qquad 1\leq k\leq s_k.
	\end{align*}
	
	It is easily checked that $a_{s_k,k} = 0$ and $|a_{j-1,k} - a_{j,k}|^2|\omega_j|^2 = \dfrac{|\eps_k|^2}{h_j^2}$ for all $1\leq j\leq s_k$. Under this choice of coefficients, we therefore have
	\begin{align*}
	\|u_k(S_\alpha)e_1\|^2 = \dsum_{j=1}^{s_k} \frac{|\eps_k|^2}{h_j^2} < |\eps_k|^2\sum_{j=1}^\infty \frac{1}{h_j^2} =: |\eps_k|^2 M.
	\end{align*}
	Since $M$ is finite, we are done if we can pick values of $s_k$ so that $\eps_k$ goes to zero. But by assumption, $\dsum_{j=1}^{s_k} \dfrac{1}{h_j|\omega_j|}$ will grow arbitrarily large, so indeed $s_k$ can be chosen to make $\eps_k$ arbitrarily small.
	
	If $\nu = 0$, we fix $\eps_k > 0$, pick $a_{0,k} = \eps_k$, and define $a_{j,k}$ recursively as before, and we then have
	\begin{align*}
	a_{s_k,k} = \eps_k\bigbracket{1 - \dsum_{j=1}^{s_k} \frac{1}{h_j|\omega_j|}}.
	\end{align*}
	It then follows that
	\begin{align*}
	\|u_k(S_\alpha)e_1\|^2 &= |0 - a_{0,k}|^2 + \dsum_{j=1}^{s_k} |a_{j-1,k} - a_{j,k}|^2|\omega_j|^2 + |a_{s_k, k}|^2|\omega_{s_k+1}|^2\\
	&= \eps_k^2 C_k,
	\end{align*}
	where $C_k$ is a constant value dependent only on the choice of value of $s_k$. It follows that one may simply pick $s_k$ freely and choose $\eps_k$ arbitrarily small to make $\|u_k(S_\alpha)e_1\|$ arbitrarily small.
\end{proof}

We are now ready to prove the following.

\begin{thm}\label{woahtheorem}
	Suppose $(|\omega_j|^{-1})\notin\ell^2$. Then, $T$ is cyclic.
\end{thm}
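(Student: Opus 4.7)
My plan is to apply the characterization of cyclicity for $T$ provided by Lemma \ref{kappapielemma}. The natural candidate is to take $\kappa = 1$ and $x = e_1 \in H_0^2$. With this choice, I need to show that for every $\varepsilon > 0$, $\mu \in \mathbb{C}$ and $y \in H_0^2$, there exists a polynomial $p \in \mathbb{C}[X]$ such that simultaneously $|p(1) - \mu| < \varepsilon$ and $\|p(S_\alpha)e_1 - y\| < \varepsilon$.

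The first step is a cyclicity observation for $S_\alpha$ alone: by Lemma \ref{lem=psalpha}, the map $p \mapsto p(S_\alpha)e_1$ has range exactly $\Span\{e_1, e_2, e_3, \ldots\}$, since given any $b_1 e_1 + \cdots + b_{d+1}e_{d+1}$ one recovers it by choosing $a_0 = b_1$ and $a_j = b_{j+1}/\omega_j$ for $j \geq 1$ (the $\omega_j$'s are non-zero because the $\alpha_j$'s must be non-zero for the series condition to make sense). Hence $e_1$ is a cyclic vector for $S_\alpha$ on $H_0^2$, and in particular there exists a polynomial $p_0$ with $\|p_0(S_\alpha)e_1 - y\| < \varepsilon/2$.

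The second step is to correct the value $p_0(1)$ to match $\mu$ without significantly perturbing the approximation of $y$. Setting $\nu := \mu - p_0(1)$, I invoke Proposition \ref{eeeeesh} (whose hypothesis $\sum |\omega_j|^{-2} = \infty$ is precisely our assumption) to obtain a polynomial $u$ with $u(1) = \nu$ and $\|u(S_\alpha)e_1\| < \varepsilon/2$. Then the polynomial $p := p_0 + u$ satisfies
\[
p(1) = p_0(1) + (\mu - p_0(1)) = \mu,
\]
so that $|p(1) - \mu| = 0 < \varepsilon$, while
\[
\|p(S_\alpha)e_1 - y\| \leq \|p_0(S_\alpha)e_1 - y\| + \|u(S_\alpha)e_1\| < \frac{\varepsilon}{2} + \frac{\varepsilon}{2} = \varepsilon.
\]
By Lemma \ref{kappapielemma}, this proves that $T$ is cyclic with cyclic vector $e_0 \oplus e_1$.

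The substantive content of the argument lies entirely in Proposition \ref{eeeeesh}, which has already been established; the present theorem is essentially the assembly of that ingredient with the general reduction in Lemma \ref{kappapielemma}. The only subtle point to verify in the write-up is that the sequence $(\alpha_j)$ in the theorem's setting consists of non-zero entries (so that $e_1$ is genuinely cyclic for $S_\alpha$), which follows from the hypothesis $\sum |\omega_j|^{-2} = \infty$ (if some $\alpha_j$ were zero, then $\omega_k = 0$ for all $k \geq j$, making the sum undefined term-by-term).
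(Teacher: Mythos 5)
Your proposal is correct and follows essentially the same route as the paper: choose $\kappa=1$, $x=e_1$ in Lemma \ref{kappapielemma}, first approximate $y$ using the fact that $p\mapsto p(S_\alpha)e_1$ reaches the dense span of $\{e_1,e_2,\dots\}$ (the paper does this via an explicit truncation and choice of coefficients, you via the same computation phrased as a surjectivity-onto-the-span observation), and then correct the value at $1$ by adding a polynomial from Proposition \ref{eeeeesh}. Your remark that all $\alpha_j$ must be non-zero for the hypothesis to make sense is a valid and worthwhile clarification, but the argument is the same as the paper's.
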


\begin{proof}
	Pick $\kappa = 1$ and $x = e_1$ in Statement (2) of Lemma \ref{kappapielemma}. Fix $\eps > 0$, $\mu \in\C$, and $y\in H_0^2$. We need to show that $p\in\C[X]$ exists such that
	\begin{align*}
	|p(1) - \mu| < \eps\qquad\text{and}\qquad \|p(S_\alpha)e_1 - y\| < \eps.
	\end{align*}
	
	Write $y = \dsum_{n=1}^\infty b_n z^n$. Since $y\in H_0^2$, there exists $N \geq 1$ such that $\dsum_{n=N+1}^\infty |b_n|^2 < \dfrac{\eps}{2}$. Fix this $N$.
	
	We claim we can choose a polynomial $\wt{p}$ such that $\|\wt{p}(S_\alpha)e_1 - y\| < \dfrac{\eps}{4}$. Write $\wt{p}(z) = \dsum_{n=0}^{N-1} c_n z^n$ and let $y'(z) = \dsum_{n=1}^N b_n z^n$. The question is to show that there exists a choice of coefficients $c_n$. We have
	\begin{align*}
	(\wt{p}(S_\alpha)e_1 - y')(z) &= \dsum_{n=0}^N c_n S_\alpha^n (z) - \dsum_{n=1}^{N-1} b_n z^n\\
	&= c_0 z + \dsum_{n=1}^{N-1} c_n \omega_n z^{n+1} - \dsum_{n=1}^N b_n z^n\\
	&= (c_0 - b_1)z + \dsum_{n=2}^{N} (c_{n-1} \omega_{n-1} - b_n) z^n.
	\end{align*}
	Clearly then
	\begin{align*}
	\|\wt{p}(S_\alpha)e_1 - y'\|^2 = |c_0 - b_1|^2 + \dsum_{n=2}^{N}|c_{n-1} \omega_{n-1} - b_n|^2.
	\end{align*}
	Since the $\omega_n$'s and the $b_n$'s are known values with $\omega_n \neq 0$ for all $n$, it follows that we can pick an appropriate choice of $c_n$'s in every instance such that $\|\wt{p}(S_\alpha)e_1 - y\|^2 < \dfrac{\eps}{4}$, for instance
	\begin{align*}
	c_0 = \frac{\sqrt{\eps}}{2}+ b_1,\qquad\text{and}\qquad c_n = \frac{b_n}{\omega_{n-1}}, \quad\forall n\geq 2.
	\end{align*}
	
	Since $(|\omega_j|^{-1})\notin\ell^2$, by Proposition \ref{eeeeesh} there exists a sequence $(u_k)\subset \C[X]$ such that
	\begin{align*}
	u_k(1) = \mu - \wt{p}(1) \qquad\text{and}\qquad \|u_k(S_\alpha)e_1\|\to 0.
	\end{align*}
	In particular there exists $k$ large enough such that $\|u_k(S_\alpha)e_1\| < \dfrac{\eps}{4}$. Fix this $k$.
	
	Now, set $p = \wt{p} + u_k$. Then we have by the triangle inequality that
	\begin{align*}
	\|p(S_\alpha)e_1 - y\| &= \|\wt{p}(S_\alpha)e_1 + u_k(S_\alpha)e_1 - (y - y') - y'\|\\
	&\leq \|\wt{p}(S_\alpha)e_1 - y'\| + \|y - y'\| +  \|u_k(S_\alpha)e_1\| < \eps.
	\end{align*}
	Also, note that $p(1) = \wt{p}(1) + u_k(1) = \mu$, which gives
	\begin{align*}
	|p(1) - \mu| = 0 < \eps.
	\end{align*}
	Hence indeed we can find such a $p$ as per Statement (2) of Lemma \ref{kappapielemma}. By this same lemma, $T$ is cyclic. This proves the theorem.
\end{proof}

The above leads to a partial solution of the remaining case for cyclicity of $C_{az+b}$. 

\begin{cor}\label{cor:cyclicity}
Let $\beta_* \neq\pm\infty$, $a>1$, and suppose $(\lam_n)$ has precisely one zero and one non-zero initial point with respect to $a$.
Let $w_j=\displaystyle\prod_{k=2}^{j+1}r_k(a,b)$. Assume that $(w_j^{-1})_j\not\in\ell^2$. Then $C_{az+b}$ is cyclic.
\end{cor}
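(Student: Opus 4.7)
The plan is to reduce the statement to Theorem~\ref{woahtheorem} via a unitary equivalence between $\mathcal H(\beta,\Lambda)$ and the Hardy space $H^2$. Under the hypothesis that $(\lambda_n)$ has precisely one zero and one non-zero initial point with respect to $a$, one has $\lambda_1=0$ and $\lambda_n=\lambda_2 a^{n-2}$ for all $n\geq 2$. Consequently $m_1=1$ and $m_k=k+1$ for every $k\geq 2$, which yields the matrix representation of $C_{az+b}$ in the orthonormal basis $(q_k)_{k\ge1}$ displayed in the proposition preceding the corollary. Explicitly, $C_{az+b}q_1=q_1$ and $C_{az+b}q_k=r_k(a,b)\,q_{k+1}$ for $k\geq 2$, with $r_k(a,b)=\tfrac{\beta_{k+1}}{\beta_k}e^{-\lambda_k b}$.

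Next, I would introduce the unitary operator $U\colon\mathcal H(\beta,\Lambda)\to H^2$ defined by $U q_k = e_{k-1}$ for $k\geq 1$. A direct computation shows that the conjugated operator $U\,C_{az+b}\,U^{-1}$ acts on $H^2=\mathbb C e_0\oplus H_0^2$ by $e_0\mapsto e_0$ and $e_k\mapsto\alpha_k e_{k+1}$ for $k\geq 1$, with weights $\alpha_k:=r_{k+1}(a,b)$. This is exactly the model operator $T=I\oplus S_\alpha$ introduced before Theorem~\ref{woahtheorem}. Note that $T$ is automatically bounded on $H^2$ since $C_{az+b}$ is bounded on $\mathcal H(\beta,\Lambda)$ by Theorem~\ref{cri-bdd}, so that $(|\alpha_k|)$ is bounded.

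Since cyclicity is preserved under unitary equivalence, it suffices to verify the hypothesis of Theorem~\ref{woahtheorem} for $T$. Here the partial products are
\[
\omega_j=\alpha_1\cdots\alpha_j=\prod_{k=2}^{j+1}r_k(a,b)=w_j,\qquad j\geq 1,
\]
and so $(|\omega_j|^{-1})_{j\ge1}\notin\ell^2$ is exactly the assumption $(w_j^{-1})_j\notin\ell^2$ of the corollary. Theorem~\ref{woahtheorem} therefore yields cyclicity of $T$ on $H^2$, and transporting back through $U^{-1}$ gives cyclicity of $C_{az+b}$ on $\mathcal H(\beta,\Lambda)$.

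The argument is essentially a translation between two settings, so there is no real obstacle: the serious analytic input has been carried out in Proposition~\ref{eeeeesh} and Theorem~\ref{woahtheorem}, and the structural matrix computation underlying the identification has already been recorded in the proposition preceding the corollary. The only point requiring a small care is keeping track of the index shift $q_k\leftrightarrow e_{k-1}$ to confirm that $\alpha_k=r_{k+1}(a,b)$ and hence that $\omega_j=w_j$; once this is done the corollary is immediate.
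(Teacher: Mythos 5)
Your proposal is correct and follows exactly the route the paper intends: the corollary is presented as an immediate consequence of the matrix representation of $C_{az+b}$ in the basis $(q_k)$ together with Theorem~\ref{woahtheorem}, via the unitary identification $q_k\mapsto e_{k-1}$ sending $C_{az+b}$ to $T=I\oplus S_\alpha$ with $\alpha_k=r_{k+1}(a,b)$ and $\omega_j=w_j$. Your careful tracking of the index shift and the boundedness of the weights fills in precisely the details the paper leaves implicit.
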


\begin{cor}\label{gaaaaaaaaaaaaaaaaaaaaaah}
	Let $\beta_* \neq\pm\infty$, $a>1$, and suppose $(\lam_n)$ has precisely one zero and one non-zero initial point with respect to $a$. Let $b\in\C$ and suppose that $\Re(b) > a\beta_*$ and $\Re(b) \geq 0$. Then, $C_{az+b}$ is cyclic.
\end{cor}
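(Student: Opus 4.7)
The plan is to reduce the corollary to Corollary~\ref{cor:cyclicity}, whose conclusion asserts that $C_{az+b}$ is cyclic as soon as $(w_j^{-1})_j\notin \ell^2$, where $w_j = \prod_{k=2}^{j+1}r_k(a,\Re(b))$. So the entire task is to show $\sum_j |w_j|^{-2} = \infty$ under the stated hypotheses.

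The first step is to make $|w_j|$ explicit. Under the structural hypothesis that $(\lam_n)$ has precisely one zero and one non-zero initial point with respect to $a$, one must have $\lam_1 = 0$ and $\lam_n = \lam_2 a^{n-2}$ for $n\geq 2$, and in particular $m_k = k+1$ for $k\geq 2$. Inserting $r_k(a,\Re(b)) = e^{-\lam_k \Re(b)}\beta_{k+1}/\beta_k$ into the product and telescoping gives
\[
|w_j| = \frac{\beta_{j+2}}{\beta_2}\exp\Bigl(-\Re(b)\sum_{k=2}^{j+1}\lam_k\Bigr).
\]

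The second step is to exploit the two assumptions on $\Re(b)$ via rather coarse estimates. Since $\Re(b)\geq 0$, one can bound the finite sum from below by its largest term $\lam_{j+1}$, so that
\[
\log\frac{1}{|w_j|^2} \geq 2\Re(b)\lam_{j+1} - 2\log\beta_{j+2} + O(1).
\]
Using $\lam_{j+2} = a\lam_{j+1}$ to bring $\lam_{j+2}$ in, the right-hand side is
\[
2\lam_{j+1}\Bigl[\Re(b) - a\cdot\frac{\log\beta_{j+2}}{\lam_{j+2}}\Bigr] + O(1),
\]
which is precisely the combination that matches the hypothesis $\Re(b) > a\beta_*$.

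The last step uses the definition of $\beta_* = \liminf_n \log\beta_n/\lam_n$: choose $\varepsilon > 0$ small enough that $\Re(b) > a(\beta_*+\varepsilon)$ (possible by the hypothesis $\Re(b) > a\beta_*$) and extract a subsequence $(j_k)$ along which $\log\beta_{j_k+2}/\lam_{j_k+2}\leq \beta_*+\varepsilon$. Along this subsequence, the previous inequality becomes
\[
\log\frac{1}{|w_{j_k}|^2} \geq 2\lam_{j_k+1}\bigl[\Re(b) - a(\beta_*+\varepsilon)\bigr] + O(1) \longrightarrow +\infty,
\]
so $|w_{j_k}|^{-2}\to +\infty$ and a fortiori $\sum_j|w_j|^{-2} = \infty$. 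The main subtlety to keep in mind is that $\beta_*$ is only a $\liminf$, not a limit, so one cannot control $\log\beta_n$ uniformly in $n$; however, since we only need the general term of $\sum_j|w_j|^{-2}$ to fail to tend to zero, it is sufficient to work along the extracting subsequence realizing the $\liminf$. With this divergence in hand, Corollary~\ref{cor:cyclicity} delivers cyclicity of $C_{az+b}$.
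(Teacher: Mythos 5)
Your proof is correct and follows essentially the same route as the paper's: both reduce to Corollary~\ref{cor:cyclicity}, compute the telescoping product $|w_j|^{-1}=\frac{\beta_2}{\beta_{j+2}}\exp\bigl(\Re(b)\sum_{k=2}^{j+1}\lam_k\bigr)$, use $\Re(b)\geq 0$ to keep only the top term $\lam_{j+1}$ of the geometric sum, and exploit the $\liminf$ defining $\beta_*$ along a subsequence together with $\Re(b)>a(\beta_*+\eps)$ to force $|w_j|^{-2}\to\infty$ along that subsequence. The only difference is that you argue directly that the general term fails to vanish, whereas the paper phrases the identical estimate as a contradiction with $(|w_j|^{-1})\in\ell^2$.
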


\begin{proof}
	Assume for the sake of contradiction that $C_{az+b}$ is not cyclic. Hence by Corollary~\ref{cor:cyclicity}, we have $(|\omega_j|^{-1})\in\ell^2$. In particular, $|\omega_j|^{-1} \to 0$. But a direct computation shows that
	\begin{align*}
	|\omega_j|^{-1} = \frac{\beta_2}{\beta_{j+2}} e^{\lam_2 \Re(b)(1+a+\dots+a^{j-1})},
	\end{align*}
and since $\Re(b)\geq 0$, we get $\dfrac{1}{\beta_{j+2}} e^{\lam_2 \Re(b) a^{j-1}} \to 0$.
	
	Recall now that $\liminf\limits_{n\to\infty} \dfrac{\log\beta_n}{\lam_n} =: \beta_*$ is finite. Therefore for all $\eps > 0$, there exists a strictly increasing subsequence $(n_p)$ of the natural numbers such that $\dfrac{\log\beta_{n_p}}{\lam_{n_p}} < \beta_* + \eps$. In particular we have
	\begin{align*}
	\frac{1}{\beta_{n_p}} > e^{-\lam_{n_p} (\beta_* + \eps)} = e^{-\lam_2 a^{n_p - 2} (\beta_* + \eps)}, \qquad \forall n_p \geq 2.
	\end{align*}
	
	Fix $\eps > 0$ small such that $\Re(b) > a(\beta_* + \eps)$. Choose the subsequence $(n_p)$ as in the above, and write $j_p = n_p - 2$ for all $n_p > 2$. Then in particular we have
	\begin{align*}
	\frac{1}{\beta_{j_p+2}} e^{\lam_2 \Re(b)a^{j_p-1}} &= \frac{1}{\beta_{n_p}} e^{\lam_2 \Re(b)a^{n_p-3}}\\
	&> e^{-\lam_2 a^{n_p - 2} (\beta_* + \eps)}e^{\lam_2 \Re(b) a^{n_p-3}}\\
	&= e^{\lam_2a^{n_p-3}(\Re(b) - a(\beta_* + \eps))}.
	\end{align*}
	By assumption, $\Re(b) > a(\beta_* + \eps)$, i.e. $\Re(b) - a(\beta_* + \eps) > 0$. Hence the RHS has infinite limit. But this means there exists a subsequence $(j_p)$ such that $|\omega_{j_p}|^{-1} \to \infty$. Contradiction.
\end{proof}

We end this section with a result which completes Proposition~\ref{cyclicity-and-supercyclicity} on supercyclicity. As the proof methodology is similar to arguments in Proposition~\ref{Ireallydunnowhattouseforlabelsalready}, we omit the full details and provide a sketch.

\begin{prop}
	Let $\beta_* = \pm\infty$, $a>1$, and suppose $(\lam_n)$ has precisely one zero and one non-zero initial point with respect to $a$. Then, $C_{az+b}$ is not supercyclic.
\end{prop}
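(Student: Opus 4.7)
The plan is to show that for every $f \in \Hblam$, the scaled orbit $\mathcal{S}_f := \{\mu\, C_{az+b}^k f : k \geq 0,\ \mu \in \C\}$ fails to be dense. The central structural observation is that under our hypotheses $\lambda_1 = 0$ and $\lambda_n = \lambda_2\, a^{n-2}$ for $n \geq 2$, so $a\lambda_n = \lambda_{n+1}$ for every $n \geq 2$, while $C_{az+b}$ fixes the constant function $1 = e^{-\lambda_1 z}$. Writing $f(z) = a_1 + \sum_{n \geq 2} a_n e^{-\lambda_n z}$, a direct induction on $k$ then yields
\[
(C_{az+b}^k f)(z) = a_1 + \sum_{n \geq 2} a_n \exp\!\Bigl(-\lambda_n b\,\tfrac{a^k-1}{a-1}\Bigr)\, e^{-\lambda_{n+k} z},\qquad k \geq 0.
\]
Two consequences will be used throughout: (a) every element of $\Orb(C_{az+b}, f)$ shares the same constant coefficient $a_1$ as $f$, and (b) for $k \geq 1$ the Dirichlet expansion of $C_{az+b}^k f$ vanishes at every frequency $\lambda_j$ with $2 \leq j \leq k+1$.

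Next I would distinguish three cases on the shape of $f$ and exhibit an explicit $g \in \Hblam$ at positive distance from $\mathcal{S}_f$. If $f$ is constant, then $\mathcal{S}_f \subseteq \C \cdot 1$ is one-dimensional and $g(z) = e^{-\lambda_2 z}$ lies at distance $\beta_2$. If $a_1 = 0$ and $f \neq 0$, then by (a) every $\mu C_{az+b}^k f$ has zero constant coefficient, so $g(z) = 1$ is at distance at least $\beta_1$ from $\mathcal S_f$. The main case is $a_1 \neq 0$ with $f$ non-constant: letting $m \geq 2$ be the smallest index with $a_m \neq 0$, I take $g(z) = a_1 + c\, e^{-\lambda_m z}$ for a parameter $c \in \C$ to be fixed.

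For $k \geq 1$, observation (b) gives that the coefficient of $e^{-\lambda_m z}$ in $C_{az+b}^k f$ is zero, so
\[
\|\mu C_{az+b}^k f - g\|^2 \geq |a_1|^2 |\mu - 1|^2 \beta_1^2 + |c|^2 \beta_m^2 \geq |c|^2 \beta_m^2,
\]
uniformly in $\mu$. For $k = 0$,
\[
\|\mu f - g\|^2 \geq |a_1|^2 |\mu - 1|^2 \beta_1^2 + |a_m|^2 \bigabs{\mu - c/a_m}^2 \beta_m^2,
\]
which is a strictly convex quadratic in $\mu \in \C$ whose infimum is strictly positive exactly when $1 \neq c/a_m$. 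Picking any $c$ with $c \neq 0$ and $c \neq a_m$ therefore yields a uniform positive lower bound on $\dist(g, \mathcal S_f)$, which proves $C_{az+b}$ is not supercyclic.

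The main technical point is the $k = 0$ estimate in the third case: one must verify that the displayed two-term quadratic has strictly positive infimum, which reduces to the elementary minimization of $A|\mu - \alpha|^2 + B|\mu - \beta|^2$ with $A, B > 0$ and $\alpha \neq \beta$ (minimum equal to $\tfrac{AB}{A+B}|\alpha - \beta|^2$). This is precisely the trick already used in the proof of Proposition~\ref{Ireallydunnowhattouseforlabelsalready} and so poses no real obstacle; the actual work lies in establishing the explicit iteration formula for $C_{az+b}^k f$ and in choosing the test vector $g$ suitably in each case.
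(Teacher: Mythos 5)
Your proof is correct and follows essentially the same strategy as the paper's (which is given there only as a sketch): for each $f$ you exhibit an explicit $g$ at positive distance from the projective orbit, using the shift of frequencies under iteration, the invariance of the constant coefficient, and the $\tfrac{AB}{A+B}|\alpha-\beta|^2$ minimization of a two-term quadratic. Your case split (on whether $a_1=0$ and on the minimal index $m\geq 2$ with $a_m\neq 0$) is organized a little differently from the paper's four cases, but it amounts to the same argument and fills in the details the paper omits.
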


\begin{proof}[Proof (Outline)]
	The idea is for every $f\in\Hblam$, we can find a function $g$ and a constant $B > 0$ such that $\dist(g, \bigcurlybracket{\mu y: y\in\Orb(C_{az+b}, f), \mu\in\C}) \geq B$. The proof is split into four cases.
	
	\textbf{- Case 1:} $f(z)\eq a_1$ $(a_1\in\C)$. The result is obvious as $\Orb(C_{az+b}, f)$ has only one element, so $\bigcurlybracket{\mu y: y\in\Orb(C_{az+b}, f), \mu\in\C}$ has dimension 1.
	
	\textbf{- Case 2:} $f(z) = a_2e^{-\lam_2 z}, a_2\neq 0$. Pick $g(z)\eq b > 0$. Since all $h\in \Orb(C_{az+b}, f)$ have non-zero constant part, it can be verified that taking $B = |b|\beta_1$ works.
	
	\textbf{- Case 3:} $f(z) = a_1 + a_2e^{-\lam_2 z}$ $(a_1,a_2\neq 0)$. Pick $g(z) = a_1 + (a_2+1)e^{-\lam_2 z}$. Since $\lam_2$ is a non-zero initial point, following the proof of Proposition~\ref{Ireallydunnowhattouseforlabelsalready} (Case 2) yields $\|\mu C_{az+b}^k f - g\|^2 \geq |a_2+1|^2\beta_2^2$ for all $k\geq 1$. Furthermore the computation for $k=0$ gives
	\begin{align*}
	\|\mu f - g\|^2 \geq |\mu - 1|^2|a_1|^2\beta_1^2 + |(\mu - 1)a_2 + 1|^2\beta_2^2,
	\end{align*}
	which is minimized in a similar manner as in Proposition~\ref{Ireallydunnowhattouseforlabelsalready}.
	
	\textbf{- Case 4:} $\exists k\geq 3$ such that $a_k \neq 0$. Pick $\kappa \neq 0, -a_2$ and $g(z) = \kappa e^{-\lam_2 z}$. This is again similar to Proposition~\ref{Ireallydunnowhattouseforlabelsalready}.
\end{proof}

\section{\bf Complex symmetry}

In this section we investigate the complex symmetry property of $C_\vphi$. In this section we adopt the convention that
\begin{equation}
e_n(z) = \beta_n q_n(z) = e^{-\lam_n z}.
\end{equation}

\subsection{Composition conjugations}

\begin{defn}
	Let $\calH$ be a $\C$-Hilbert space. A map $\calC: \calH \to \calH$ satisfying the conditions of
	\begin{enumerate}
		\item isometry: $\|\calC x\| = \|x\|, \forall x\in\calH$,
		\item involutivity: $\calC\calC = I$, the identity map, and
		\item anti-linearity: $\calC(\lambda x + \mu y) = \ol{\lambda} \calC x + \ol{\mu} \calC y, \forall x,y\in\calH, \forall \lambda,\mu\in\C$,
	\end{enumerate}
	is called a conjugation on $\calH$.
\end{defn}

\begin{defn}
	Let $\xi:\C_{L/2-\beta_*} \to \C_{L/2-\beta_*}$ be an analytic function. Define
	\begin{align*}
	J_\xi f(z) = \ol{f(\ol{\xi(z)})},\qquad \forall f\in\Hblam.
	\end{align*}
	If $J_\xi$ satisfies the definition of a conjugation, then it is called a composition conjugation.
\end{defn}

We will investigate composition conjugations induced by polynomial functions~$\xi$.
\begin{thm}\label{conjugationswoohoo!}
Assume that $\xi$ is a polynomial.	The following statements are equivalent.
	\begin{enumerate}
		\item $J_\xi$ is a conjugation on $\Hblam$.
		\item $\xi(z) = z + ci, c\in\R$.
	\end{enumerate}
\end{thm}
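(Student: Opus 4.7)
\textit{Plan.} For the easy direction $(2)\Rightarrow(1)$, fix $\xi(z) = z + ci$ with $c\in\R$. Using $\overline{z+ci} = \overline{z} - ci$ and $\lambda_n\in\R$, a short computation yields
\begin{align*}
J_\xi e_n(z) = \overline{e^{-\lambda_n(\overline{z}-ci)}} = e^{-\lambda_n ci}\, e_n(z).
\end{align*}
By antilinearity and continuity, for $f = \sum_n a_n e_n\in\Hblam$ we then have $J_\xi f = \sum_n \overline{a_n}\, e^{-\lambda_n ci}\, e_n$. Since $|e^{-\lambda_n ci}|=1$, isometry is immediate; involutivity follows from $\overline{\overline{a_n}e^{-\lambda_n ci}}\cdot e^{-\lambda_n ci} = a_n$ applied coordinatewise; antilinearity is built into the definition of $J_\xi$.

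For the reverse direction $(1)\Rightarrow(2)$, I proceed in two steps. Step~1 (reducing to $\xi$ affine): since $J_\xi(\Hblam)\subseteq\Hblam$, the same computation as above gives $J_\xi e_n(z) = e^{-\lambda_n\xi(z)}\in\Hblam$. Expanding this element as a Dirichlet series $e^{-\lambda_n\xi(z)} = \sum_{m\geq m_n} b_m^{(n)} e^{-\lambda_m z}$ with $b_{m_n}^{(n)}\neq 0$, I transcribe the proof of Proposition~\ref{gen-symbol} verbatim (apply Lemma~\ref{e-branch}, then Lemma~\ref{branch}): this produces $\lim_{\Re(z)\to\infty}\xi(z)/z = \lambda_{m_n}/\lambda_n$, a non-negative real constant $a$ independent of $n$. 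Because $\xi$ is a polynomial, this forces $\deg\xi\leq 1$, hence $\xi(z) = az+b$ with $a\geq 0$ and $a\lambda_n = \lambda_{m_n}$ for every $n$.

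Step~2 (pinning down $a$ and $b$ via involutivity): with $\xi(z) = az+b$, the previous formula collapses to $J_\xi e_n = e^{-\lambda_n b}\, e_{m_n}$, so
\begin{align*}
J_\xi^2 e_n = \overline{e^{-\lambda_n b}}\, J_\xi e_{m_n} = e^{-\lambda_n\overline{b} - \lambda_{m_n}b}\, e_{m_{m_n}}.
\end{align*}
Comparing with $J_\xi^2 e_n = e_n$ forces $m_{m_n} = n$ (whence $a^2\lambda_n = \lambda_n$ for every $n$; since $\lambda_n\to\infty$ and $a\geq 0$, we get $a = 1$ and $m_n = n$) together with the scalar condition $e^{-2\lambda_n\Re(b)} = 1$, which, being a real equation and valid for arbitrarily large $\lambda_n$, forces $\Re(b) = 0$; hence $b = ci$ for some $c\in\R$. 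The main point of care is Step~1, where the argument of Proposition~\ref{gen-symbol} must be reused despite $J_\xi$ being antilinear rather than linear; this is harmless because that argument only exploits the fact that $e^{-\lambda_n\xi(z)}$ lies in $\Hblam$, which here follows from $J_\xi$ mapping $\Hblam$ into itself.
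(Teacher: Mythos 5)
Your proof is correct and follows essentially the same route as the paper: both reduce $\xi$ to an affine map $az+b$ by applying the asymptotic argument of Proposition~\ref{gen-symbol} to $e^{-\lambda_n\xi(z)}\in\Hblam$, and then use the involution $J_\xi^2=I$ on the basis $(e_n)$ to force $a=1$ and $\Re(b)=0$. The only cosmetic difference is that the paper excludes $a=0$ by an isometry computation (after passing through the boundedness criterion of Theorem~\ref{cri-bdd}), whereas you exclude it via the involutivity condition $m_{m_n}=n$; both are valid.
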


\begin{proof}
$(1) \Longrightarrow (2)$: first let us note that since $J_\xi$ is a conjugation on $\Hblam$, then $\xi$ induces a bounded composition operator on $\Hblam$. Indeed, let $f(z) = \dsum_{n=1}^\infty a_n e^{-\lambda_n z} \in \Hblam$. Then the function
\begin{align*}
\widetilde{f}(z) = \dsum_{n=1}^\infty \ol{a_n} e^{-\lambda_n z}.
\end{align*}
also belongs to $\Hblam$, and so $J_\xi \widetilde{f}\in\Hblam$. Note now that
\[
J_\xi \widetilde{f}(z)=\overline{\widetilde{f}(\overline{\xi(z)})}=\sum_{n=1}^\infty a_n e^{-\lambda_n \xi(z)}=(C_\xi f)(z).
\]
Therefore, we get $C_\xi f\in\Hblam$ as required. In particular, according to Theorem~\ref{cri-bdd}, $\xi$ should be of the form $\xi(z)=az+b$, where $a=0$ and $\Re(b)>\frac{L}{2}-\beta_*$ or $a\geq 1$ and $\Re(b)\geq (1-a)(\frac{L}{2}-\beta_*)$.

First, we will show that $a\neq 0$. Argue by absurdity and assume that $a=0$. Then $\xi(z)=b$ and $(J_\xi f)(z)=\overline{f(\overline{b})}$. Apply that equation to the vector of the orthonormal basis $q_k(z)=\frac{1}{\beta_k}e^{-\lambda_k z}$, which gives
\[
(J_\xi q_k)(z)=\frac{1}{\beta_k}e^{-\lambda_k z}.
\]
Since $J_\xi$ is an isometry, we get
\[
\frac{\beta_1}{\beta_k}e^{-\lambda_k \Re(b)}=1,
\]
that is
\[
-\Re(b)=\frac{\log(\beta_k)}{\lambda_k}-\frac{\log(\beta_1)}{\lambda_k},\qquad k\geq 2.
\]
Taking the limit when $k\to \infty$, we get $\Re(b)=-\beta_*$. In particular, $\beta_*\neq \pm\infty$ and the condition $\Re(b)>\frac{L}{2}-\beta_*$ gives the contradiction. Hence $a\neq 0$.

Let us show now that $a=1$ and $b\in i\mathbb R$. Remind that $e_n(z)=e^{-\lambda_n z}\in\Hblam$. Since $J_\xi$ is conjugation on $\Hblam$, we have $e_n=J_\xi^2 e_n$, which gives
\begin{align*}
e^{-\lambda_n z}=& J_\xi(\overline{e_n)(\overline{\xi(z)})})\\
=&J_\xi(e^{-\overline{\lambda_n}\xi(z)})\\
=&e^{-\lambda_n \overline{\xi(\overline{\xi(z)})}}.
\end{align*}
By analyticity, we get that for every $n\geq 1$, there exists $k_n\in\mathbb Z$ such that
\[
-\lambda_n \overline{\xi(\overline{\xi(z)})}=-\lambda_n z+2i\pi k_n.
\]
In particular, since $\lambda_2>\lambda_1\geq 0$, we deduce that
\[
\overline{\xi(\overline{\xi(z)})}=z-\frac{2i\pi k_2}{\lambda_2}=z+ic_1,
\]
where $c_1=-2\pi \frac{k_2}{\lambda_2}\in\mathbb R$. Recall now that $\xi(z)=az+b$. An easy computation shows that $\overline{\xi(\overline{\xi(z)})}=a^2 z+\bar a b+\bar b$. Hence, we see that $a^2=1$, that $a=1$ (because $a\geq 0$) and $b+\bar b=ic_1$, that is $2\Re(b)=ic_1$. This implies that $\Re(b)=c_1=0$. Finally, we get that $\xi(z)=z+i\Im(b)=z+ic$, with $c\in\R$.

$(2) \Longrightarrow (1)$: assume that $\xi(z)=z+ic$, $c\in\R$. Since $\Re(\xi(z))=\Re(z)$, it is clear that $\xi$ maps $\C_{\frac{L}{2}-\beta_*}$ into itself. Then $J_\xi$ is well defined. Now, if $f(z)=\displaystyle\sum_{n=1}^\infty a_n e^{-\lambda_n z}\in\Hblam$, then
\[
(J_\xi f)(z)=\sum_{n=1}^\infty \overline{a_n}e^{-\lambda_n \xi(z)}=\sum_{n=1}^\infty \overline{a_n}e^{-ic\lambda_n}e^{-\lambda_n z}.
\]
Since $\left|\overline{a_n} e^{-ic\lambda_n}\right|=|a_n|$, we see that $J_\xi f\in\Hblam$. Moreover,
\[
\|J_\xi f\|^2=\sum_{n=1}^\infty \beta_n^2|a_n|^2=\|f\|^2.
\]
Hence, $J_\xi$ is an isometry from $\Hblam$ into itself. It is of course antilinear. It remains to check that $J_\xi$ is involutive. But, note that
\[
(J_\xi e_n)(z)=e^{-\lambda_n \xi(z)}=e^{-ic\lambda_n}e^{-\lambda_n z}=e^{-ic\lambda_n}e_n(z),
\]
and so
\[
J_\xi^2(e_n)=J_\xi(e^{-ic\lambda_n}e_n)=e^{ic\lambda_n}J_\xi(e_n)=e^{ic\lambda_n}e^{-ic\lambda_n} e_n=e_n
\]
By linearity and continuity of $J_\xi^2$, and density of $\bigvee(e_n:n\geq 1)$ in $\Hblam$, we get that $J_\xi^2=I$. That proves that $J_\xi$ is a conjugation on $\Hblam$.
\end{proof}

\subsection{Complex symmetry}

Having proven the form of composition conjugations $J_\xi$ on $\Hblam$, we can now consider the complex symmetry property of bounded composition operators on $\Hblam$ with respect to conjugations $J_\xi$.

\begin{defn}
	Let $T$ be a continuous linear operator mapping a Hilbert space $\calH$ to itself. Given that $\calC$ is a conjugation, we say that $T$ is \emph{$\calC$-symmetric}, or \emph{complex symmetric with respect to $\calC$}, if
	\begin{align*}
	\calC T  \calC = T^*.
	\end{align*}
	If such a $\calC$ exists, we say that $T$ is \emph{complex symmetric}.
\end{defn}

Recall that Proposition~\ref{adj-a0} provides explicitly the adjoint functions $C_{az+b}^*$. The following two results follow easily (cf. \cite{DMK}).

\begin{prop}\label{adj-moreresults}
	The following are true.
\begin{enumerate}
\item $C_{z+b}^* = C_{z+\ol{b}}$.
\item If $a > 1$, then $\ker C_{az+b}^*$ is non-trivial.
\end{enumerate}
\end{prop}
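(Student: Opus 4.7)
Both statements will be derived essentially by inspection from the explicit adjoint formula furnished by Proposition~\ref{adj-a0}, namely
\[
(C_{az+b}^* f)(z) = \sum_{n=1}^\infty a_{m_n}\,\dfrac{\beta_{m_n}^2 e^{-\lam_n \overline{b}}}{\beta_n^2}\, e^{-\lam_n z},\qquad f(z)=\sum_{n=1}^\infty a_n e^{-\lam_n z}\in\Hblam,
\]
together with the information on the index map $n\mapsto m_n$ provided by Proposition~\ref{gen-symbol} and the remark following it.

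\emph{Proof of (1).} For $a=1$ the identity \eqref{constA} gives $\lam_{m_n}=\lam_n$ for every $n$, and since $(\lam_n)$ is strictly increasing this forces $m_n=n$, hence also $\beta_{m_n}=\beta_n$. Substituting into the displayed formula collapses every factor and yields
\[
(C_{z+b}^* f)(z) = \sum_{n=1}^\infty a_n\, e^{-\lam_n \overline{b}}\, e^{-\lam_n z} = \sum_{n=1}^\infty a_n\, e^{-\lam_n(z+\overline{b})}=(C_{z+\overline{b}} f)(z),
\]
which gives the claim.

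\emph{Proof of (2).} The adjoint formula shows that $f=\sum a_n e^{-\lam_n z}$ lies in $\ker C_{az+b}^*$ if and only if $a_{m_n}=0$ for every $n\ge 1$; equivalently, the coefficient sequence of $f$ is supported off the image $I:=\{m_n:n\ge 1\}$ of the index map. It thus suffices to produce a single index $k\notin I$ and take $f=e^{-\lam_k z}$. The key observation is that $a>1$ combined with \eqref{constA} forces $\lam_{m_n}=a\lam_n>\lam_n$ whenever $\lam_n>0$, hence $m_n>n$ in that case. Consequently: if $\lam_1>0$, then $m_n\ge 2$ for all $n$, so $1\notin I$ and $e^{-\lam_1 z}\in\ker C_{az+b}^*$; if $\lam_1=0$, then $m_1=1$ while $m_n>n\ge 2$ for $n\ge 2$, so $2\notin I$ and $e^{-\lam_2 z}\in\ker C_{az+b}^*$. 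In either case the kernel is non-trivial.

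\emph{Main obstacle.} There is no genuine difficulty here once Proposition~\ref{adj-a0} is in hand; the only point that requires care is the case analysis on $\lam_1$ in part~(2), where one must distinguish the behavior of $m_1$ and verify that the image of $n\mapsto m_n$ always misses at least one natural number. This is handled transparently by the strict monotonicity of $(\lam_n)$ together with the relation $\lam_{m_n}=a\lam_n$.
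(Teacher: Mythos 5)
Your proposal is correct and follows essentially the same route as the paper: part (1) by substituting $m_n=n$ into the adjoint formula of Proposition~\ref{adj-a0}, and part (2) by observing that the index map $n\mapsto m_n$ misses the index $1$ (if $\lambda_1>0$) or the index $2$ (if $\lambda_1=0$), so that the corresponding exponential lies in the kernel. The only cosmetic difference is that you phrase the kernel condition via the support of the coefficient sequence off the image of $n\mapsto m_n$, whereas the paper invokes the strict monotonicity of $j\mapsto m_j$ directly; these are the same argument.
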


\begin{proof}
(1) follows directly from Proposition~\ref{adj-a0} (note that since $a=1$ we have $m_n=n$). Let us now prove (2). If $\lambda_1\neq 0$, then $\lambda_{m_1}=a\lambda_1 >\lambda_1$, and so $m_1>1$. It follows from \eqref{C-adj} that since $j\mapsto m_j$ is strictly increasing, we have $C_{az+b}^* e_1= 0$, that is $e_1\in \ker C_{az+b}^*$. If $\lambda_1=0$, then $m_1=1$ and arguing as before, we show that $e_2\in \ker C_{az+b}^*$.
\end{proof}

%
%
%

As already mentioned, composition operators associated to non-constant analytic symbols are injective. Thus we immediately get the following. 
\begin{cor}\label{r>1adjoint}
	Suppose $\beta_* \neq \pm\infty$. Let $a>1$ and suppose $C_{az+b}$ is a bounded composition operator on $\Hblam$. Then, no symbol $\psi$ exists such that $C_{az+b}^* = C_\psi$ (here, $\psi$ need not be a polynomial).
\end{cor}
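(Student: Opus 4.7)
The plan is to argue by contradiction: assume some analytic self-map $\psi$ of $\C_{L/2-\beta_*}$ satisfies $C_{az+b}^{*} = C_\psi$, and derive a contradiction using Proposition~\ref{adj-moreresults}(2), which tells us that $\ker C_{az+b}^{*} \neq \{0\}$ whenever $a>1$. The whole question then reduces to showing that $C_\psi$ is injective regardless of what kind of symbol $\psi$ is, which splits naturally into two cases.

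First I would handle the non-constant case. If $\psi$ is a non-constant analytic self-map of $\C_{L/2-\beta_*}$, then the open mapping theorem tells us that $\psi\bigl(\C_{L/2-\beta_*}\bigr)$ contains an open subset of $\C_{L/2-\beta_*}$, and the uniqueness principle for analytic functions (applied to functions $f\in\Hblam$ that are analytic on $\C_{L/2-\beta_*}$ by Proposition~\ref{c-w}) then forces $C_\psi f = f\circ\psi = 0$ to imply $f \equiv 0$. This is the very same argument already invoked at the beginning of the proof of Proposition~\ref{prop:closeness}, so it may simply be quoted. Combined with the non-trivial kernel from Proposition~\ref{adj-moreresults}(2), this produces the contradiction.

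Next I would rule out the constant case. Here one must note that $C_{az+b}$ itself is injective (since $az+b$ is non-constant), so by the standard Hilbert space identity $\overline{\mathrm{range}(T^{*})} = (\ker T)^{\perp}$, the adjoint $C_{az+b}^{*}$ has dense range in $\Hblam$. If $\psi \equiv c$ were constant, then either $C_\psi$ is not even well-defined on $\Hblam$ (when $\lambda_1 > 0$, by Proposition~\ref{bdd-a0}), or else $C_\psi = f \mapsto f(c)\cdot 1$ is a rank-one operator, whose range is one-dimensional and therefore cannot be dense in the infinite-dimensional space $\Hblam$. Either alternative is incompatible with $C_\psi = C_{az+b}^{*}$.

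No genuine obstacle is anticipated: the corollary is essentially a direct packaging of Proposition~\ref{adj-moreresults}(2) together with the injectivity of composition operators induced by non-constant symbols. The only subtlety worth flagging is the constant case, which one might overlook since the statement explicitly allows $\psi$ not to be a polynomial; ruling it out via the dense-range/rank-one dichotomy is the cleanest approach.
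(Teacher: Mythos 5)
Your proposal is correct and follows essentially the same route as the paper: the non-constant case is ruled out by the injectivity of $C_\psi$ (open mapping theorem plus the uniqueness principle) against the non-trivial kernel of $C_{az+b}^{*}$ from Proposition~\ref{adj-moreresults}(2), exactly as in the paper's proof. Your treatment of the constant case (dense range of $C_{az+b}^{*}$ versus the rank-one or undefined nature of $C_\psi$) is in fact more detailed than the paper's one-line dismissal, and is a welcome clarification rather than a deviation.
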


\begin{proof}
First note that since $a>1$, then there cannot exists a constant symbol $\psi$ such that $C_{az+b}^* = C_\psi$. Then it remains to apply Proposition~\ref{adj-moreresults} (2) to get the result.
\end{proof}

\begin{prop}
	Let $\xi(z) = z + ci, c\in\R$. Let $b\in\C$ and suppose that $\Re(b)\geq 0$. Then $C_{z+b}$ is $J_\xi$-symmetric.
\end{prop}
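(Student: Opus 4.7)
The plan is to verify the identity $J_\xi C_{z+b} J_\xi = C_{z+b}^*$ directly on the orthonormal basis $(q_n)_{n\geq 1}$ (equivalently, on the dense family $(e_n)_{n\geq 1}$), and then extend by anti-linearity and continuity. The key input is Proposition~\ref{adj-moreresults}(1), which says $C_{z+b}^* = C_{z+\overline{b}}$, and the hypothesis $\Re(b)\geq 0$ ensures both $C_{z+b}$ and $C_{z+\overline{b}}$ are bounded on $\Hblam$ by Proposition~\ref{bdd-a1}. Note also that $\xi(z)=z+ci$ is exactly the form needed so that $J_\xi$ is a conjugation, by Theorem~\ref{conjugationswoohoo!}.

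The computation proceeds in three elementary steps on an arbitrary basis vector $e_n(z)=e^{-\lambda_n z}$. First,
\[
(J_\xi e_n)(z) = \overline{e_n\bigl(\overline{z+ci}\bigr)} = \overline{e^{-\lambda_n(\overline{z}-ci)}} = e^{-ic\lambda_n} e_n(z),
\]
so $J_\xi$ acts on $e_n$ as multiplication by the unimodular scalar $e^{-ic\lambda_n}$. Second, $C_{z+b}e_n = e^{-\lambda_n b}e_n$, so
\[
(C_{z+b} J_\xi e_n)(z) = e^{-ic\lambda_n}\, e^{-\lambda_n b}\, e_n(z).
\]
Third, applying $J_\xi$ again, by anti-linearity the scalar factor gets conjugated, giving
\[
(J_\xi C_{z+b} J_\xi e_n)(z) = \overline{e^{-ic\lambda_n}e^{-\lambda_n b}}\,(J_\xi e_n)(z) = e^{ic\lambda_n} e^{-\lambda_n \overline{b}}\, e^{-ic\lambda_n} e_n(z) = e^{-\lambda_n \overline{b}} e_n(z).
\]

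On the other hand, $C_{z+\overline{b}}e_n = e^{-\lambda_n \overline{b}}e_n$, so $(J_\xi C_{z+b} J_\xi) e_n = C_{z+\overline{b}}e_n = C_{z+b}^* e_n$ for every $n\geq 1$. Since $\bigvee(e_n:n\geq 1)$ is dense in $\Hblam$, and both $J_\xi C_{z+b} J_\xi$ and $C_{z+b}^*$ are continuous (the former being a composition of the bounded $C_{z+b}$ with the isometric involutions $J_\xi$), the identity extends to all of $\Hblam$, proving the complex symmetry. There is no serious obstacle here; the only point requiring care is the bookkeeping of the anti-linearity of $J_\xi$ in the third step, which is exactly what conjugates the $b$ to $\overline{b}$ and cancels the $\xi$-induced phase $e^{-ic\lambda_n}$.
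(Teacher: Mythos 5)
Your proof is correct and follows essentially the same route as the paper: both verify the symmetry identity on the basis $(e_n)$ using the explicit adjoint formula ($C_{z+b}^*=C_{z+\overline{b}}$, i.e.\ Proposition~\ref{adj-moreresults}(1), which comes from \eqref{C-adj}) and extend by density. The only cosmetic difference is that the paper checks the equivalent identity $C_{z+b}J_\xi = J_\xi C_{z+b}^*$ rather than $J_\xi C_{z+b}J_\xi = C_{z+b}^*$; since $J_\xi$ is an involution these are the same statement.
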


\begin{proof}
Remind that $e_n(z)=e^{-\lambda_n z}$, $n\geq 1$. Since $a=1$, $m_n=n$, $n\geq 1$, and so, on one hand, we have
\[
(C_{z+b} J_\xi e_n)(z)= C_{z+b}\bigbracket{e^{-\lambda_n ci} e^{-\lambda_n z}}=e^{-\lambda_n (b+ci)} e^{-\lambda_{n} z}.
\]
On the other hand, by \eqref{C-adj}, we have
\[
(J_\xi C_{z+b}^* e_n)(z) = J_\xi\bigbracket{e^{-\lam_n \ol{b}} e^{-\lam_n z}}= e^{-\lam_n (b+ci)} e^{-\lam_n z}.
\]
Thus $C_{z+b} J_\xi$ and $J_\xi C_{z+b}^*$ coincide on an orthogonal basis, whence $C_{z+b} J_\xi=J_\xi C_{z+b}^*$. Hence $C_{z+b}$ is $J_\xi$-symmetric.
\end{proof}

The above proves that all bounded composition operators with symbol $C_{z+b}$ is complex symmetric. To determine complex symmetry property of $C_{az+b}$ when $a > 1$, we recall the following result.

\begin{lem}[\cite{garcia2006complex}]\label{lem:Garcia}
	Let $T$ be a complex symmetric operator. Then, $\dim\ker T^* = \dim\ker T$.
\end{lem}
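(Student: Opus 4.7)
The plan is to use the conjugation $\calC$ itself as the bridge between $\ker T$ and $\ker T^*$. The key observation is that the complex symmetry identity $\calC T \calC = T^*$, combined with involutivity $\calC^2 = I$, immediately yields $\calC(\ker T) \subseteq \ker T^*$: if $Tx = 0$, then
\[
T^*(\calC x) \;=\; \calC T \calC (\calC x) \;=\; \calC (Tx) \;=\; 0,
\]
using $\calC(0)=0$ (a consequence of antilinearity). Rewriting the hypothesis as $T = \calC T^* \calC$ and running the symmetric calculation produces the reverse inclusion $\calC(\ker T^*) \subseteq \ker T$. Since $\calC^2 = I$, these two inclusions combine into an honest bijection $\calC\big|_{\ker T} : \ker T \to \ker T^*$ whose inverse is $\calC\big|_{\ker T^*}$.

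Next I would promote this set-theoretic bijection to an equality of Hilbert-space dimensions. Since $\calC$ is an isometric antilinear involution, a short polarization argument (comparing $\|\calC(x+y)\|^2$ and $\|\calC(x+iy)\|^2$ against $\|x+y\|^2$ and $\|x+iy\|^2$) yields the conjugate-inner-product identity $\langle \calC x, \calC y\rangle = \overline{\langle x, y\rangle}$ for all $x,y\in\calH$. Applied to an orthonormal basis $\{e_i\}_{i\in I}$ of $\ker T$, this shows that $\{\calC e_i\}_{i\in I}$ is an orthonormal system in $\ker T^*$; surjectivity of $\calC\big|_{\ker T}$ onto $\ker T^*$ forces this system to span $\ker T^*$, hence to be a basis. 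Matching cardinalities of the two bases gives $\dim\ker T = \dim\ker T^*$.

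The only delicate point is checking the polarization identity for antilinear isometries without mishandling a complex conjugate; once that identity is secured, the rest of the proof is purely formal manipulation of $\calC T \calC = T^*$ and $\calC^2 = I$. I do not anticipate any genuine obstacle beyond this bookkeeping.
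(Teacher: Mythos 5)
Your argument is correct and complete. Note that the paper itself offers no proof of this lemma: it is quoted verbatim from Garcia--Putinar \cite{garcia2006complex}, so there is nothing to compare against on the paper's side. What you supply is the standard self-contained argument: the identities $T^*=\calC T\calC$ and $T=\calC T^*\calC$ (the latter obtained by conjugating both sides with $\calC$ and using $\calC^2=I$) give the two inclusions $\calC(\ker T)\subseteq\ker T^*$ and $\calC(\ker T^*)\subseteq\ker T$, which combine with involutivity into the equality $\calC(\ker T)=\ker T^*$; the polarization step correctly yields $\langle\calC x,\calC y\rangle=\overline{\langle x,y\rangle}$ for an antilinear isometry, so $\calC$ carries an orthonormal basis of the closed subspace $\ker T$ to an orthonormal basis of $\ker T^*$ (totality of the image system following from surjectivity together with continuity and antilinearity of $\calC$ applied to basis expansions). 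This matches cardinalities and proves the dimension equality in full generality, including the infinite-dimensional case, which is exactly the level of generality the paper needs.
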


Now we may prove the following result.

\begin{prop}
	Let $\beta_* \neq\pm\infty$ and $a > 1$. Then, $C_{az+b}$ is not complex symmetric on $\Hblam$.
\end{prop}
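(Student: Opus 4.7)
The plan is to apply Lemma~\ref{lem:Garcia}, which asserts that any complex symmetric operator $T$ satisfies $\dim \ker T = \dim \ker T^*$. I will show that for $a>1$ these two dimensions differ for $C_{az+b}$, which immediately rules out complex symmetry.

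First I would observe that $\varphi(z) = az+b$ with $a > 1$ is a non-constant analytic self-map of $\C_{\frac{L}{2}-\beta_*}$. By the open mapping theorem and the uniqueness principle for analytic functions (the same argument invoked in the proof of Proposition~\ref{prop:closeness}), the composition operator $C_{az+b}$ is injective on $\Hblam$, so
\[
\dim \ker C_{az+b} = 0.
\]

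Next, I would invoke Proposition~\ref{adj-moreresults}(2), which tells us precisely that $\ker C_{az+b}^*$ is non-trivial whenever $a > 1$. Hence
\[
\dim \ker C_{az+b}^* \geq 1 > 0 = \dim \ker C_{az+b}.
\]
If $C_{az+b}$ were complex symmetric with respect to some conjugation $\calC$, then Lemma~\ref{lem:Garcia} would force $\dim \ker C_{az+b} = \dim \ker C_{az+b}^*$, contradicting the inequality above. Therefore $C_{az+b}$ is not complex symmetric on $\Hblam$.

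There is no real obstacle here: all the work was already done in Proposition~\ref{adj-moreresults}(2), where the explicit formula for the adjoint in Proposition~\ref{adj-a0} produced an explicit nonzero element of $\ker C_{az+b}^*$ (namely $e_1$ if $\lambda_1 \neq 0$ and $e_2$ if $\lambda_1 = 0$). Combining injectivity of the composition operator with a non-trivial kernel for its adjoint is a standard obstruction to complex symmetry, and Lemma~\ref{lem:Garcia} packages it cleanly.
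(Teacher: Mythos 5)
Your proposal is correct and is exactly the argument the paper gives: it combines Lemma~\ref{lem:Garcia} with Proposition~\ref{adj-moreresults}(2) and the injectivity of $C_{az+b}$ (coming from the non-constancy of the symbol) to get the kernel-dimension mismatch. No issues.
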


\begin{proof}
It follows immediately from Lemma~\ref{lem:Garcia} and Proposition~\ref{adj-moreresults} and the fact that $C_{az+b}$ is one to  one.
\end{proof}

Finally we consider the constant case.

\begin{lem}[\cite{GarWog}]\label{rankonecs}
	Any rank one operator is complex symmetric.
\end{lem}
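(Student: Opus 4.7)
The plan is to write an arbitrary rank-one operator in tensor-product form $T = u \otimes v$, where $(u\otimes v)(x) := \langle x, v\rangle u$ for nonzero $u, v \in \calH$, so that $T^* = v \otimes u$. The key geometric observation I would exploit is that the subspace $M := \Span\{u, v\}$ (of dimension $1$ or $2$) is invariant under both $T$ and $T^*$, while both operators annihilate $M^\perp$. Consequently, it is enough to construct a conjugation $\calC_0$ on $M$ satisfying $\calC_0 (T|_M) \calC_0 = T^*|_M$, and then extend by any conjugation $\calC_1$ on $M^\perp$ (take coordinate-wise complex conjugation in any orthonormal basis of $M^\perp$); the direct sum $\calC = \calC_0 \oplus \calC_1$ will then satisfy $\calC T \calC = T^*$ on all of $\calH$.

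The construction on $M$ splits into two cases. If $u, v$ are linearly dependent, then $T$ is a scalar multiple of the self-adjoint rank-one operator $u \otimes u$, and I would produce the desired conjugation by completing $u/\|u\|$ to an orthonormal basis of $\calH$ and taking coordinate-wise complex conjugation; a short verification then yields $\calC T \calC = T^*$ thanks to $\calC u = u$. If $u, v$ are linearly independent, I would define
\[
\calC_0 u = \frac{\|u\|}{\|v\|}\, v, \qquad \calC_0 v = \frac{\|v\|}{\|u\|}\, u,
\]
and extend antilinearly on $M$. Involutivity on the basis vectors, and hence on $M$, is immediate from the reciprocity of the two scalars. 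For the target identity, using the conjugation rule $\langle \calC_0 x, v\rangle = \overline{\langle x, \calC_0 v\rangle}$ one computes $\calC_0 T \calC_0 x = \langle x, \calC_0 v\rangle\, \calC_0 u$, which reduces to $\langle x, u\rangle v = T^* x$ once the two scalar factors multiply to $1$.

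The one step that requires actual care, and which I expect to be the main obstacle, is verifying that the antilinear map $\calC_0$ defined above is isometric on $M$. Expanding $\|\calC_0(\alpha u + \beta v)\|^2$, the diagonal contributions become $|\beta|^2 (\|v\|/\|u\|)^2 \|u\|^2 + |\alpha|^2 (\|u\|/\|v\|)^2 \|v\|^2$, which collapses neatly to $|\alpha|^2 \|u\|^2 + |\beta|^2 \|v\|^2$; the cross term produces a factor $(\|u\|/\|v\|) \cdot \overline{(\|v\|/\|u\|)} = 1$, which is exactly what is needed for the expression $2 \Re(\alpha \bar\beta \langle u, v\rangle)$ to reappear with the correct sign, matching $\|\alpha u + \beta v\|^2$. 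This short computation is where the specific choice of scalars is essential, and it is the only non-formal step in the argument.
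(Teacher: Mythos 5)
Your proof is correct, but note that the paper does not actually prove this lemma: it is imported verbatim from Garcia--Wogen \cite{GarWog}, so there is no internal argument to compare against. The standard route in the cited literature is to observe that a rank-one operator is unitarily equivalent to a $2\times 2$ (or $1\times 1$) matrix direct-summed with zero and then invoke the fact that every operator on a space of dimension at most two is complex symmetric; your argument instead builds the conjugation explicitly, which makes the lemma self-contained. All the steps check out: the decomposition $\calH=M\oplus M^\perp$ with $M=\Span\{u,v\}$ reducing both $T=u\otimes v$ and $T^*=v\otimes u$, the direct sum of conjugations being a conjugation, the dependent case via $\calC u=u$ and $\langle\calC x,u\rangle=\overline{\langle x,u\rangle}$, and in the independent case the scalars $\|u\|/\|v\|$ and $\|v\|/\|u\|$ are exactly what is needed so that the diagonal terms of $\|\calC_0(\alpha u+\beta v)\|^2$ swap roles while the cross term $2\Re\bigl(\alpha\bar\beta\langle u,v\rangle\bigr)$ is reproduced with coefficient $1$, and so that $\calC_0 T\calC_0 x=\langle x,\calC_0 v\rangle\,\calC_0 u=\langle x,u\rangle v=T^*x$. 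The only point worth making explicit in a written version is the identity $\langle\calC_0 x,v\rangle=\overline{\langle x,\calC_0 v\rangle}$, which you should derive from isometry plus involutivity via polarization (for an antilinear isometry one gets $\langle\calC x,\calC y\rangle=\langle y,x\rangle$); as stated it is asserted as a known ``conjugation rule,'' which is fine but deserves a line.
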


We deduce immediately the following.
\begin{cor}\label{rankoneprop}
	Let $\lam_1 = 0$ and $\beta_* \neq\pm\infty$. Then, $C_b$ is complex symmetric on $\Hblam$.
\end{cor}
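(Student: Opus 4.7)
The plan is essentially to combine two earlier results in a one-line argument. By Proposition~\ref{bdd-a0}, under the hypothesis $\lambda_1=0$ (and $b\in\C_{L/2-\beta_*}$, which is implicit for $C_b$ to make sense as a composition operator with $\varphi(z)\equiv b$ mapping into the domain), the composition operator $C_b$ is bounded on $\Hblam$ and admits the explicit representation $C_b = \beta_1 q_1 \otimes k_b$, namely
\[
(C_b f)(z) = \langle f, k_b\rangle\,\beta_1 q_1(z),\qquad f\in\Hblam.
\]
In particular, $C_b$ has rank one.

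First I would note that the rank-one conclusion is the content needed: Lemma~\ref{rankonecs} asserts that every rank-one operator on a complex Hilbert space is complex symmetric, i.e.\ there exists a conjugation $\calC$ on $\Hblam$ with $\calC C_b \calC = C_b^*$. Applying Lemma~\ref{rankonecs} directly to the rank-one operator $C_b$ therefore yields the conclusion, giving a proof that is essentially a two-line invocation.

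There is no real obstacle here; the only subtle point is that the conjugation whose existence is guaranteed by Lemma~\ref{rankonecs} need not be of the form $J_\xi$ studied in Theorem~\ref{conjugationswoohoo!} (indeed the composition conjugations are only of the form $z\mapsto z+ci$ with $c\in\R$), but the definition of complex symmetry requires only the existence of \emph{some} conjugation on $\Hblam$, not a composition conjugation. Thus I would briefly emphasize this distinction in the write-up to make clear why the corollary is not in tension with Theorem~\ref{conjugationswoohoo!}. The proof itself is immediate from Proposition~\ref{bdd-a0} together with Lemma~\ref{rankonecs}.
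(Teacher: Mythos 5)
Your argument is exactly the paper's: it deduces the corollary from Proposition~\ref{bdd-a0} (which shows $C_b=\beta_1 q_1\otimes k_b$ is rank one when $\lambda_1=0$) together with Lemma~\ref{rankonecs} (every rank-one operator is complex symmetric). The proposal is correct, and your added remark that the conjugation need not be a composition conjugation $J_\xi$ is a fair clarification but not needed for the proof.
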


We can summarize all the previous results to obtain the following characterization of complex symmetric composition operators.

\begin{thm}\label{catchallcomplexconj}
Let $\beta_* \neq \pm\infty$ and $C_\vphi$ be a bounded composition operator on $\Hblam$. 
Then:
\begin{enumerate}
		\item If $\vphi(z) = z+b$, then $C_\vphi$ is complex symmetric. More precisely, $C_\vphi$ is $J_\xi$-symmetric for all composition conjugations $J_\xi$ where $\xi(z) = z+ci, c\in\R$.
		\item If $\vphi(z) = az+b$ where $a\in R_1(\lam_n)$, then $C_\vphi$ is never complex symmetric.
		\item If $\lambda_1=0$, then $C_b$ is complex symmetric.
	\end{enumerate}

\end{thm}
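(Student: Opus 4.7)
This theorem is a compilation of the three preceding results, so the plan is essentially to cite them in turn and organize the conclusions by case.

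For part (1), I would recall that by Proposition~\ref{bdd-a1} boundedness of $C_{z+b}$ on $\Hblam$ forces $\Re(b)\geq 0$, so the hypotheses of the proposition establishing $J_\xi$-symmetry for $C_{z+b}$ (with $\xi(z)=z+ci$, $c\in\R$) are automatically met. That proposition directly gives the stronger statement in (1): $C_{z+b}$ is $J_\xi$-symmetric for every real $c$, and in particular complex symmetric.

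For part (2), the symbol is $\varphi(z)=az+b$ with $a\in\mathcal{R}_1(\Lambda)$, hence $a>1$. I would invoke the proposition showing that such $C_{az+b}$ is never complex symmetric; its proof combines the anti-symmetry of kernel dimensions (Lemma~\ref{lem:Garcia}) with Proposition~\ref{adj-moreresults}(2) giving $\ker C_{az+b}^*\neq\{0\}$, together with the injectivity of $C_{az+b}$ (from the open mapping/identity theorem applied to the non-constant symbol). The conclusion follows immediately.

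For part (3), the symbol $\varphi(z)=b$ is constant, so by Proposition~\ref{bdd-a0} we need $\lambda_1=0$ for boundedness, and in that case $C_b$ has rank one. Then Lemma~\ref{rankonecs} (every rank one operator is complex symmetric), recorded as Corollary~\ref{rankoneprop}, finishes the argument. Since the theorem is purely organizational, no step is a genuine obstacle; the only care needed is matching the hypothesis $\beta_*\neq\pm\infty$ (which ensures that $\Hblam$ sits inside a genuine half-plane) with each case and verifying that the boundedness hypothesis on $C_\varphi$ puts us in exactly one of the three situations covered by the cited results.
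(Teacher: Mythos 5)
Your proposal is correct and matches the paper exactly: the theorem is stated there as a summary of the three immediately preceding results (the $J_\xi$-symmetry proposition for $C_{z+b}$, the non-complex-symmetry proposition for $a>1$ via Lemma~\ref{lem:Garcia} and Proposition~\ref{adj-moreresults}, and Corollary~\ref{rankoneprop} for the rank-one case), and the paper gives no further argument. Your additional remarks that boundedness forces $\Re(b)\geq 0$ in case (1) and $\lambda_1=0$ in case (3) are the right sanity checks.
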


\section{\bf On similar results when $\beta_* = \infty$}

In \cite{DK20}, a characterization for boundedness of $C_\vphi$ on $\Hblam$ in the case of $\beta_* = \infty$ was proven. Specifically, the authors prove the following analogue to Theorem \ref{cri-bdd}:

\begin{thm}[\cite{DK20}]\label{inftycase}
	Let $\beta_* = \infty$. Let $\vphi$ be an entire function. The following are true.
	\begin{enumerate}
		\item If $\lam_1 > 0$, then $C_\vphi$ is a bounded composition operator on $\Hblam$ if and only if $\vphi(z) = z + b$, $\Re(b) \geq 0$.
		\item If $\lam_1 = 0$, then $C_\vphi$ is a bounded composition operator on $\Hblam$ if and only if $\vphi(z) = z + b$, $\Re(b) \geq 0$, or if $\vphi$ is constant.
	\end{enumerate}
\end{thm}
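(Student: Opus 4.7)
The plan adapts the strategy of Theorem~\ref{cri-bdd} to the entire setting $\beta_* = \infty$. The sufficiency is straightforward: Proposition~\ref{bdd-a1} gives $C_{z+b}$ bounded whenever $\Re(b) \geq 0$, while Proposition~\ref{bdd-a0} gives $C_b$ bounded whenever $\lambda_1 = 0$; both carry over because the half-plane $\C_{L/2-\beta_*}$ degenerates to all of $\C$ when $\beta_* = \infty$, so the auxiliary constraint $\Re(b) > L/2 - \beta_*$ is vacuous.

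For the necessity, the key initial task is to show $\vphi$ must be affine. Since $\beta_* = \infty$, every $f \in \Hblam$ is entire, so Valiron's formula \eqref{eq:Valiron-formula} gives $D_f = -\infty$, allowing Lemma~\ref{e-branch} to apply. The proof of Proposition~\ref{gen-symbol} then transfers word-for-word: for each $k \geq 1$, writing $C_\vphi(e^{-\lambda_k z}) = \sum_n b_n^{(k)} e^{-\lambda_n z}$ with first nonzero index $m_k$, one obtains $\lim_{\Re(z) \to \infty} \vphi(z)/z = A$ for a real constant $A$ satisfying $A \lambda_k = \lambda_{m_k}$ for all $k$. Combining Lemma~\ref{e-branch} with Lemma~\ref{branch} yields further that $\lambda_k(\vphi(z) - Az) \to $ constant as $\Re(z) \to \infty$, so writing $\vphi(z) = Az + c + h(z)$ for a suitable $c \in \C$, the entire function $h$ tends to zero as $\Re(z) \to \infty$. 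To promote this to $h \equiv 0$, one exploits the factorisation $e^{-\lambda_k \vphi(z)} = e^{-\lambda_k c}\, e^{-\lambda_{m_k} z}\, e^{-\lambda_k h(z)}$ together with $e^{-\lambda_k \vphi(z)} \in \Hblam$. Elements of $\Hblam$ are bounded on each vertical strip (by Cauchy--Schwarz applied to $|f(z)| \leq \|f\| \|k_z\|$, using the super-exponential decay of $1/\beta_n$ when $\beta_* = \infty$), which forces $e^{-\lambda_k h(z)}$ to be a bounded entire function; Liouville then makes $e^{-\lambda_k h}$ constant, and the boundary behaviour $h \to 0$ gives $h \equiv 0$.

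With $\vphi(z) = Az + c$ now known to be affine, it remains to exclude $A > 1$. Adapting the necessary-condition portion of Proposition~\ref{cri-bdd-a>1}, the probe functions $q_n$ give $r_n(A, \Re(c)) := e^{-\lambda_n \Re(c)} \beta_{m_n}/\beta_n \leq M$ for some $M$ and all $n$. Fix $n_0$ with $\lambda_{n_0} > 0$ and iterate $n_{k+1} = m_{n_k}$, so $\lambda_{n_k} = A^k \lambda_{n_0}$. Taking logs in the telescoping bound
\begin{equation*}
\frac{\beta_{n_K}}{\beta_{n_0}} \leq M^K \exp\!\Bigl(\lambda_{n_0} \Re(c)\, \frac{A^K - 1}{A - 1}\Bigr),
\end{equation*}
and dividing by $\lambda_{n_K} = A^K \lambda_{n_0}$, one gets
\begin{equation*}
\limsup_{K \to \infty} \frac{\log \beta_{n_K}}{\lambda_{n_K}} \leq \frac{\Re(c)}{A - 1} < \infty,
\end{equation*}
contradicting $\beta_* = \infty$. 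Therefore $A \in \{0,1\}$ (with $A = 0$ only when $\lambda_1 = 0$), and the characterisation follows from Propositions~\ref{bdd-a0} and \ref{bdd-a1}.

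The main obstacle is the Liouville step in the second paragraph. Mere pointwise decay $h(z) \to 0$ as $\Re(z) \to \infty$ is not by itself sufficient, as $h(z) = e^{-z}$ shows; the rigidity must come from pairing this decay with the Dirichlet-series structure of $e^{-\lambda_k h(z)}$ and the super-exponential decay of $1/\beta_n$ (equivalently, the rapid decay of the reproducing kernel norm $\|k_w\|$ on right half-planes). A Phragm\'en--Lindel\"of argument on vertical half-strips, combined with growth estimates for $\|k_w\|$ as $\Re(w) \to -\infty$, appears to be the most promising route to upgrading entireness plus boundary decay into a genuine global bound on $h$.
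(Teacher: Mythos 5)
First, a point of context: the paper does not prove this statement. Theorem~\ref{inftycase} is quoted from \cite{DK20}, and the only hint of method given here is the remark in the introduction that \cite{DK20} obtains affineness of $\vphi$ via Liouville's theorem. So your proposal must be judged on its own internal correctness, and there it has a genuine gap at exactly that Liouville step. The surrounding material is sound: the sufficiency via Propositions~\ref{bdd-a0} and~\ref{bdd-a1}; the transfer of Proposition~\ref{gen-symbol} (legitimate, since $\beta_*=\infty$ forces $D_f=-\infty$ for every $f\in\Hblam$, so Lemma~\ref{e-branch} applies); and the exclusion of $A>1$ by telescoping the bound $r_{n_k}(A,\Re(c))\le M$ along the orbit $n_{k+1}=m_{n_k}$ to contradict $\beta_*=\infty$ --- that last computation is correct and is a clean argument.

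The gap is the claim that, because elements of $\Hblam$ are bounded on each vertical strip, $e^{-\lam_k h(z)}$ is a bounded entire function. Boundedness on every vertical strip does not imply global boundedness (the function $e^{z}$ is bounded on each vertical strip), and the bound your argument actually yields on $\{|\Re(z)|\le R\}$ is of order $e^{\lam_{m_k}R}\,\|k_{-R}\|$, which tends to infinity with $R$: indeed $\|k_{-R}\|^2=\sum_n\beta_n^{-2}e^{2\lam_n R}$ contains terms with $\lam_n>\lam_{m_k}$, so $e^{2\lam_{m_k}R}\|k_{-R}\|^2\ge\beta_{m_k+1}^{-2}e^{2(\lam_{m_k}+\lam_{m_k+1})R}\to\infty$. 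Hence Liouville cannot be invoked as written, and your closing paragraph concedes as much by offering a Phragm\'en--Lindel\"of argument only as ``the most promising route'' rather than carrying it out. Until $h\equiv 0$ is actually established, the necessity direction --- that a bounded $C_\vphi$ forces $\vphi$ to be affine, which is precisely the content for which the theorem is cited --- remains unproven.
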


The authors also estimate $\|C_b\|_{\rm op}$ and compute $\|C_{z+b}\|_{\rm op}$. They conjectured that results on certain properties of bounded composition operators on spaces $\{\HbS\}$, a proper subset of the set of spaces $A := \{\Hblam: \beta_* = \infty\}$, hold for bounded composition operators on all spaces in $A$.

We remark that up to minor modifications in the proof, the following results in our paper hold for all spaces in $A$. \textbf{In the following list, the condition $\beta_* = \infty$ is assumed.} We will use the following observation:
\begin{equation}\label{simplification}
r_n(1,\Re(b)) = e^{-\lam_n \Re(b)}.
\end{equation}

\begin{itemize}
	\item \underline{Operator norms:} Propositions \ref{bdd-a0} and \ref{cri-bdd-a>1}. In the latter case the operator norm simplifies via \eqref{simplification} to $\|C_{z+b}\|_{\rm op} = e^{-\lam_1\Re(b)}$ (cf. \cite{HKZalter}).
	\item \underline{Essential norms and compactness:} Proposition \ref{cpt-a0} and Theorem \ref{ess-norm} (cf. \cite{HHK, HKZalter}).
	\item \underline{Schatten class and adjoint:} Propositions \ref{Sp-a1} (cf. \cite{WY}) and \ref{adj-a0}.
	\item \underline{Compact differences:} Theorem \ref{compactdiffcharacterisation} and Corollary \ref{ehhhhhhhhhhhhhh}. In the former, condition (2)(i) simplifies to $\Re(b) = \Re(b') = 0$ (c.f. \cite{HHK}).
	\item \underline{Closed range:} Section 5.1 preamble (on $C_b$) and Proposition \ref{prop:closeness}. The latter simplifies via \eqref{simplification} to the statement that $R(C_{z+b})$ is closed if and only if $\Re(b) = 0$.
	\item \underline{Cyclicity:} Section 5.2 preamble (on $C_b$) and Proposition \ref{cyclicity-and-supercyclicity}.
	\item \underline{Conjugations:} Theorem \ref{conjugationswoohoo!} (cf. \cite{DMK}). It is worth noting that due to Theorem \ref{inftycase}, we can weaken the assumption on $\xi$ to simply assuming $\xi$ is entire.
	\item \underline{Complex symmetry:} Theorem \ref{catchallcomplexconj}, leaving out statement (2) (cf. \cite{DMK}).
\end{itemize}

\bigskip
%

\end{document}